\newtheorem*{rep@theorem}{\rep@title}
\newcommand{\newreptheorem}[2]{%
\newenvironment{rep#1}[1]{%
 \def\rep@title{#2 \ref{##1}}%
 \begin{rep@theorem}}%
 {\end{rep@theorem}}}
\newtheorem{thm}{Theorem}[section]
\newtheorem{lemma}[thm]{Lemma}
\newtheorem{prop}[thm]{Proposition}
\newtheorem{corr}[thm]{Corollary}
\newtheorem{claim}[thm]{Claim}
\newtheorem*{thm*}{Theorem}
\newtheorem*{lemma*}{Lemma}
\newtheorem*{prop*}{Proposition}
\newtheorem*{corr*}{Corrolary}
\newtheorem*{claim*}{Claim}
\theoremstyle{remark}
\newtheorem{rmk}[thm]{Remark}
\newtheorem*{rmk*}{Remark}
\newtheorem*{conj*}{Conjecture}
\newtheorem*{quest*}{Question}
\theoremstyle{definition}
\newtheorem{defn}[thm]{Definition}
\newtheorem{exmp}[thm]{Example}
\newtheorem*{defn*}{Definition}
\newtheorem*{exmp*}{Example}
\newenvironment{equ*}[1]{\begin{IEEEeqnarray*}{#1}}{\end{IEEEeqnarray*}}
\newcommand{\R}{\mathbb{R}}
\newcommand{\F}{\mathbb{F}}
\newcommand{\Z}{\mathbb{Z}}
\newcommand{\N}{\mathbb{N}}
\newcommand{\Scl}{\mathcal{S}}
\newcommand{\att}{\texttt{a}}
\newcommand{\at}{\texttt{a}}
\newcommand{\bt}{\texttt{b}}
\newcommand{\xt}{\texttt{x}}
\newcommand{\yt}{\texttt{y}}
\newcommand{\zt}{\texttt{z}}
\newcommand{\sign}{\textrm{sign}}
\newcommand{\Arm}{\mathrm{A}}
\newcommand{\Vrm}{\mathrm{V}}
\newcommand{\Erm}{\mathrm{E}}
\newcommand{\Trm}{\mathrm{T}}
\newcommand{\Homeorm}{\mathrm{Homeo}}
\newcommand{\eurm}{\mathrm{eu}}
\newcommand{\Hrm}{\mathrm{H}}
\newcommand{\Acl}{\mathcal{A}}
\newcommand{\To}{\text{[T1]}}
\newcommand{\Toa}{\text{[T1$\at$]}}
\newcommand{\Tob}{\text{[T1$\bt$]}}
\newcommand{\Tt}{\text{[T2]}}
\newcommand{\Tta}{\text{[T2$\at$]}}
\newcommand{\Ttb}{\text{[T2$\bt$]}}
\newcommand{\cl}{\mathrm{cl}}
\newcommand{\scl}{\mathrm{scl}}
\newcommand{\inj}{\hookrightarrow}
\newcommand{\col}{\colon}
\author{Nicolaus Heuer}
\date{\today}
\title{Gaps in scl for Amalgamated Free Products and RAAGs}
\address{Department of Mathematics\\
  University of Oxford}
\email[N.~Heuer]{heuer@maths.ox.ac.uk}
\begin{document}

\begin{abstract}
We develop a new criterion to tell if a group $G$ has the maximal gap of $1/2$ in stable commutator length (scl).
For amalgamated free products $G = A \star_C B$ we show that every element $g$ in the commutator subgroup of $G$ which does not conjugate into $A$ or $B$ satisfies $\scl(g) \geq 1/2$, provided that $C$ embeds as a \emph{left relatively convex} subgroup in both $A$ and $B$. 
We deduce from this that every non-trivial element $g$ in the commutator subgroup of a right-angled Artin group $G$ satisfies $\scl(g) \geq 1/2$. This bound is sharp and is inherited by all fundamental groups of special cube complexes.

We prove these statements by constructing explicit extremal homogeneous quasimorphisms $\bar{\phi} \col G \to \R$ satisfying $\bar{\phi}(g) \geq 1$ and $D(\bar{\phi})\leq 1$. Such maps were previously unknown, even for non-abelian free groups. 
For these quasimorphisms $\bar{\phi}$ there is an action $\rho \col G \to \mathrm{Homeo}^+(S^1)$ on the circle such that $[\delta^1 \bar{\phi}]=\rho^*\eurm^\R_b \in \Hrm^2_b(G,\R)$, for $\eurm^\R_b$ the real bounded Euler class.
\end{abstract}

\maketitle

\section{Introduction}
For a group $G$ let $G'$ be the commutator subgroup. For an element $g \in G'$ the \emph{commutator length} ($\cl(g)$) denotes the minimal number of commutators needed to express $g$ as their product.
We define the \emph{stable commutator length (scl)} via $\scl(g) = \lim_{n \to \infty} \cl(g^n)/n$.

Stable commutator length is well studied and has geometric meaning: 
Let $X$ be a topological space, let $\gamma$ be a loop in $X$ and let $[\gamma]$ be the conjugacy class in $\pi_1(X)$ corresponding to $\gamma$. 
Then both $\cl([\gamma])$ and $\scl([\gamma])$ measure the minimal complexity of an orientable surface needed 
to bound $\gamma$. The theory of these invariants is developed by Calegari in \cite{calegari:scl}.
A group $G$ is said to have a \emph{gap in stable commutator length} if there is a constant $C>0$ such that either $\scl(g) = 0$ or $\scl(g) \geq C$ for every non-trivial $g \in G'$. If $G$ is non-abelian, such a constant necessarily satisfies $C \leq 1/2$.
Similarly we may define gaps in scl for classes of groups. Many classes of ``negatively curved'' groups have a gap in scl; see Subsection \ref{subsec:spectral gaps of scl}. 

A common way of establishing gaps in $\scl$ is by constructing \emph{quasimorphisms} and using \emph{Bavard's Duality Theorem} (see \cite{bavard}): For an element $g \in G'$,
\[
\scl(g)  = \sup_{\bar{\phi} \in \mathcal{Q}(G)} \frac{\bar{\phi}(g)}{2 D(\bar{\phi}) }
\] 
where $\mathcal{Q}(G)$ is the space of \emph{homogeneous quasimorphisms} and $D(\bar{\phi})$ is the \emph{defect of $\bar{\phi}$}; see Subsection \ref{subsec:quasimorphisms and Bavard's Duality} for the definitions and the precise statement.
Though it is known that for every element $g \in G'$ the supremum in Bavard's Duality Theorem is obtained by so-called \emph{extremal quasimorphism} these maps are only known explicitly in special cases and are hard to construct;
see \cite{calegari:extremal} and \cite{calegari:isometric}. 

In the first part of this paper, we will construct a family of extremal quasimorphisms on non-abelian free groups. Let $\F_2 = \langle \at, \bt \rangle$ be the free group on generators $\at$ and $\bt$ and let $w \in \F_2$ be such that it does not conjugate into $\langle \at \rangle$ or $\langle \bt \rangle$. Then we will construct a homogeneous quasimorphism $\bar{\phi}$ such that 
$\bar{\phi}(w) \geq 1$ and $D(\bar{\phi})\leq 1$. This realises the well-known gap of $1/2$ in the case of non-abelian free groups.
Our approach is as follows: instead of constructing more complicated quasimorphisms $\bar{\phi}$ we first ``simplify'' the element $w$.

This simplification is formalised by functions $\Phi \col G \to \Acl \subset \F_2$, called \emph{letter-quasimorphisms}; see Definition \ref{defn:letter quasihomomorphism}. 
Here $\Acl$ denotes the set of \emph{alternating words} in $\F_2 = \langle \at, \bt \rangle$ with the generators $\at$ and $\bt$. These are words where each letter alternates between $\{ \at, \at^{-1} \}$ and $\{ \bt, \bt^{-1} \}$.
Letter-quasimorphisms are a special case of quasimorphisms between arbitrary groups defined by Hartnick--Schweitzer \cite{hartnick-schweitzer}. After this simplification, the extremal quasimorphisms on $G$ are obtained 
by pulling back most basic quasimorphisms $\F_2 \to \R$ via such letter-quasimorphisms $G \to \Acl \subset \F_2$. We further deduce that such quasimorphisms are induced by a circle action $\rho \col G \to \Homeorm^+(S^1)$ by examining the defect and using Theorem \ref{thm:ghys} due to Ghys; see also \cite{ghys}.
We show:
\begin{reptheorem}{thm:main}
Let $G$ be a group, $g \in G$ and suppose that there is a letter-quasimorphism $\Phi \col G \to \Acl$ such that $\Phi(g)$ is non-trivial and $\Phi(g^n) = \Phi(g)^n$ for all $n \in \N$.
Then there is an explicit homogeneous quasimorphism $\bar{\phi} \col G \to \R$ such that $\bar{\phi}(g) \geq 1$ and $D(\bar{\phi}) \leq 1$.

If $G$ is countable then there is an action $\rho \col G \to \mathrm{Homeo}^+(S^1)$ such that $[\delta^1 \bar{\phi}]=\rho^*\eurm^\R_b \in \Hrm^2_b(G,\R)$, for $\eurm^\R_b$ the real bounded Euler class.
\end{reptheorem}
By Bavard's Duality Theorem it is immediate that if such an element $g$ additionally lies in $G'$, then $\scl(g) \geq 1/2$.
We state Theorem \ref{thm:main} separately as it may also be applied in other cases than the ones presented in this paper; see Remark \ref{rmk:chen-heuer}.
Many groups $G$ have the property that for any element $g \in G'$ there is a letter-quasimorphism $\Phi_g \col G \to \Acl$ such that $\Phi_g(g^n) = \Phi_g(g)^n$ where $\Phi_g(g) \in \Acl$ is non-trivial. We will see that residually free groups and right-angled Artin groups have this property. Note the similarities of this property with being \emph{residually free}; see Remark \ref{rmk:criterion for gaps}.  

In the second part of this paper we apply Theorem \ref{thm:main} to amalgamated free products using left-orders. A subgroup $H < G$ is called \emph{left-relatively convex} if there is an order on the left cosets $G/H$ which is invariant under left multiplication by $G$. We will construct letter-quasimorphisms $G \to \Acl \subset \F_2$ using the sign of these orders. We deduce:
\begin{reptheorem}{thm:amalgamation}
Let $A, B, C$ be groups, $\kappa_A \col C \inj A$ and $\kappa_B \col C \inj B$ injections and suppose both $\kappa_A(C) < A$ and $\kappa_B(C) < B$ are left-relatively convex. If $g \in A \star_C B$ does not conjugate into one of the factors then there is a homogeneous quasimorphism $\bar{\phi} \col A \star_C B \to \R$ such that $\bar{\phi}(g) \geq 1$ and
$D(\bar{\phi}) \leq 1$.
If $G$ is countable then there is an action $\rho \col G \to \mathrm{Homeo}^+(S^1)$ such that $[\delta^1 \bar{\phi}]=\rho^*\eurm^\R_b \in \Hrm^2_b(G,\R)$, for $\eurm^\R_b$ the real bounded Euler class.
\end{reptheorem}
It is possible to generalise Theorem \ref{thm:amalgamation} to graphs of groups; see Remark \ref{rmk:chen-heuer}.
Again by Bavard's Duality Theorem we infer that any such $g$ which also lies in the commutator subgroup satisfies $\scl(g) \geq 1/2$. We apply this to right-angled Artin groups using the work of \cite{convexsub}. This way we prove:
\begin{reptheorem}{thm:raags and scl}
Every non-trivial element $g \in G'$ in the commutator subgroup of a right-angled Artin group $G$ satisfies $\scl(g) \geq 1/2$. This bound is sharp.
\end{reptheorem}
This is an improvement of the bound previously found in \cite{raags1} and \cite{raags2} who deduced a general bound of $1/24$ and a bound of $1/20$ if the right-angled Artin group is two dimensional.
Every subgroup of a right-angled Artin group will inherit this bound. Such groups are now known to be an extremely rich class, following the theory of special cube complexes. See \cite{wise}, \cite{haglund-wise}, \cite{agol}, \cite{martin1} and \cite{martin2}.
Stable commutator length may serve as an invariant to distinguish virtually special from special cube complexes.

We collect some properties of the quasimorphisms constructed in this paper.
\begin{itemize}
\item 
The quasimorphisms are induced by circle actions $\rho \col G \to \Homeorm^+(S^1)$ even though we do not construct the explicit action $\rho$. In particular, for every $e \not = g \in F'$ where $F$ is a non-abelian free group and $\scl(g) = 1/2$ there is an \emph{extremal} quasimorphism $\bar{\phi} \col F \to \R$ induced by a circle action.
It is unknown if 
for an arbitrary element $g \in F'$ there is an action of $F$ on the circle such that the induced quasimorphism is extremal with respect to $g$. 
\item There are relatively few quasimorphisms needed to obtain the $1/2$ bound in Theorem \ref{thm:raags and scl}. Let $G$ be a right-angled Artin group.
Analysis of the constructions show that there is a sequence $\Scl_N \subset \mathcal{Q}(G)$ of nested sets  of homogeneous quasimorphisms such that for every non-trivial cyclically reduced element $g$ of length less than $N$ there is some $\bar{\phi} \in \Scl_N$ such that $\bar{\phi}(g) \geq 1$ and $D(\bar{\phi}) \leq 1$. We see that $|\Scl_N| = O(N)$ and the rate-constant only depends on the number of generators of the right-angled Artin group.
\item We obtain gap results even for elements which are not in the commutator subgroup. This suggests that it may be interesting to use Bavard's Dualtiy Theorem as a generalisation of stable commutator length to an invariant of general group elements $g \in G$. That is to study the supremum of $\bar{\phi}(g) / 2$ where $\bar{\phi}$ ranges over all homogeneous quasimorphisms with $D(\bar{\phi}) = 1$ which vanish or are bounded on a fixed generating set.
In \cite{calegari:ziggurats} the authors studied this supremum over all homogeneous quasimorphisms induced by circle actions. They could prove that this supremum has certain 
qualitative similarities to the experimental values observed for $\scl$. This includes the experimental phenomenon that values with low denominators appear more frequently in $\scl$.
\end{itemize}

\subsection{Organisation}
In Section \ref{sec:QM and Bavard} we introduce notation, definitions and basic or well established results on stable commutator length, quasimorphisms and Bavard's Duality Theorem.
In Section \ref{sec:letter-thin triples and alpha, beta} we introduce \emph{letter-thin triples} which are a special type of triples $(x_1,x_2,x_3)$ of alternating elements $x_1, x_2, x_3 \in \Acl$. 
These will be crucial in estimating the defect of the quasimorphisms constructed in this paper.
We will define maps $\alpha, \beta \col \Acl \to \Acl$, which we show to respect letter-thin triples in Lemma \ref{lemma:alpha keeps thin.}.
In Section \ref{sec:gaps via Letter-Quasimorphisms} we define and study \emph{letter-quasimorphisms} which are maps from arbitrary groups to alternating words of the free group. We deduce Theorem \ref{thm:main} which serves as a criterion for $\scl$-gaps of $1/2$ using these letter-quasimorphisms.
Section \ref{sec:Left orders and convex subgroups} recalls some results of \cite{convexsub} on left relatively convex subgroups and orders on groups.
Using the sign of these orders we are able to deduce $1/2$ gaps for amalgamated free products in Section \ref{sec:amalgamation}; see Theorem \ref{thm:amalgamation}. We show the $1/2$ gaps for right-angled Artin groups in Section \ref{sec:RAAGs and scl}; see Theorem \ref{thm:raags and scl}.

\subsection*{Acknowledgements} I would like to thank my supervisor, Martin Bridson, for his help, support and guidance, and Ric Wade for his very helpful comments. I would further like to thank the referee for carefully reading the paper and recommending helpful improvements.
Moreover, I would like to thank the Isaac Newton Institute for Mathematical Sciences in Cambridge for support and hospitality during the programme \emph{Non-Positive Curvature Group Actions and Cohomology} where work on this paper was undertaken. 
I would like to thank Danny Calegari for a stimulating conversation at the Isaac Newton Institute and Max Forester for pointing out errors in a previous version of this paper.
This work was supported by EPSRC grant no EP/K032208/1. The author is also supported by the Oxford-Cocker Scholarship.

\section{Quasimorphisms and Bavard's Duality Theorem} \label{sec:QM and Bavard}

In Subsection \ref{subsec:quasimorphisms and Bavard's Duality} we give basic properties and definitions of stable commutator length and Bavard's Duality Theorem. In Subsection \ref{subsec:spectral gaps of scl} we collect some known results on (spectral) gaps in stable commutator length. In Subsections \ref{subsec:generalised qm} we define generalised quasimorphisms and in Subsection \ref{subsec:brooks and 2-brooks} the well known Brooks quasimorphisms.

\subsection{Quasimorphisms and Bavard's Duality Theorem} \label{subsec:quasimorphisms and Bavard's Duality} 
For what follows Greek letters ($\alpha$, $\beta$) will denote generic functions, upper-case Roman letters ($A$, $B$) will denote generic groups, lower-case Roman letters ($a,b$) generic group elements and code-font ($\at$, $\bt$) will denote letters in a free group. We will stick to this notation unless it is mathematical convention to do otherwise.

Let $G$ be a group. For two elements $g,h \in G$ the \emph{commutator} is defined via $[g,h] = g h g^{-1} h^{-1}$
and the group generated by all such commutators is called the \emph{commutator subgroup} of $G$ and is denoted by $G'$.
For an element $g \in G'$ we set
\[
\cl(g) = \min \{ k \mid g = \prod_{i=1}^k [g_i,h_i]; g_i, h_i \in G \}
\]
the \emph{commutator length of $g$}. Note that $\cl$ is subadditive and hence the limit
\[
\scl(g) = \lim_{n \to \infty} \frac{\cl(g^n)}{n}
\]
exists and is called \emph{stable commutator length (scl)}. 
See \cite{calegari:scl} for a comprehensive reference on scl. Calegari showed that in non-abelian free groups scl can be computed efficiently in polynomial time and is rational. For a group $G$, the set of possible values of $\scl$ is not fully understood, even for non-abelian free groups. See Subsection \ref{subsec:spectral gaps of scl}
for a discussion on gaps in $\scl$.
We note the following basic property:
\begin{prop}
$\scl$ is monotone and characteristic. That is, for any group homomorphism $\theta \col G \to H$ and any $g \in G$ we have 
$\scl(g) \geq \scl( \theta(g))$. If $\theta$ is an automorphism, then $\scl(g) = \scl( \theta(g))$.
\end{prop}

A \emph{quasimorphism} is a map $\phi \col G \to \R$ such that there is a constant $D$, such that for all $g, h \in G$, $|\phi(g) + \phi(h) - \phi(gh) | \leq D$. The infimum of all such  $D$ is called the \emph{defect} of $\phi$ and denoted by $D(\phi)$. Note that quasimorphisms form a vector space under pointwise addition and scalar multiplication. 
A quasimorphism $\bar{\phi}$ is said to be \emph{homogeneous} if $\bar{\phi}(g^n) = n \bar{\phi}(g)$ for all $n \in \Z$, $g \in G$. In particular, $\bar{\phi}$ is \emph{alternating}, i.e. $\bar{\phi}(g^{-1}) = - \bar{\phi}(g)$ for all $g \in G$.

Every quasimorphism $\phi \col G \to \R$ is boundedly close to a unique homogeneous quasimorphism $\bar{\phi} \col G \to \R$  defined via
\[
\bar{\phi}(g) := \lim_{n \to \infty} \frac{\phi(g^n)}{n}
\]
and we call $\bar{\phi}$ the \emph{homogenisation} of $\phi$. Homogeneous quasimorphisms on $G$ form a vector space, denoted by $\mathcal{Q}(G)$.
\begin{prop} \label{prop:defect of homogenisation doubles}
Let $\phi \col G \to \R$ be a quasimorphism and let be $\bar{\phi}$ be its homogenisation. Then $D(\bar{\phi}) \leq 2 D(\phi)$.
\end{prop}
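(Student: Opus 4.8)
The plan is to first measure the distance between $\bar{\phi}$ and $\phi$, and then use homogeneity to convert this into a bound on the defect of $\bar{\phi}$. Write $D = D(\phi)$. An easy induction on $n$, applying the quasimorphism inequality to $g^{n}=g\cdot g^{n-1}$, gives
\[
|\phi(g^n)-n\,\phi(g)|\le (n-1)D\qquad(g\in G,\ n\ge 1),
\]
and in fact the slightly sharper statement $|n\bar{\phi}(g)-\phi(g^n)|\le D$ follows from Fekete's lemma applied to the subadditive sequence $n\mapsto\phi(g^n)+D$ (and the superadditive one $n\mapsto\phi(g^n)-D$). Dividing by $n$ and letting $n\to\infty$ yields $|\bar{\phi}(g)-\phi(g)|\le D$ for all $g$, i.e. $\|\bar{\phi}-\phi\|_\infty\le D(\phi)$; in particular $\bar{\phi}$ is again a quasimorphism (homogeneous, alternating, and a class function). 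Already the triangle inequality gives the crude bound $D(\bar\phi)\le D(\phi)+3\|\bar\phi-\phi\|_\infty\le 4D(\phi)$; the content of the proposition is the sharper factor $2$.

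For the defect, fix $g,h\in G$. Homogeneity gives, for every $n\ge 1$, the exact identity
\[
n\bigl(\bar{\phi}(g)+\bar{\phi}(h)-\bar{\phi}(gh)\bigr)=\bar{\phi}(g^n)+\bar{\phi}(h^n)-\bar{\phi}\bigl((gh)^n\bigr).
\]
I would estimate the right-hand side by exchanging each $\bar\phi$-value for the corresponding $\phi$-value (total cost $3D$, by the closeness estimate above), and then comparing $\phi((gh)^n)$ with $\phi(g^nh^n)$ — which differs from $\phi(g^n)+\phi(h^n)$ by at most $D$ — through the quasimorphism inequality applied to a well-chosen factorisation linking $(gh)^n$ to $g^n$ and $h^n$ (the two elements differ by $(gh)^{-n}g^nh^n\in G'$, or by a conjugate of it). Carried out with the factorisation arranged so that the $\Theta(n)$-sized contributions of $\phi(g^n),\phi(h^n),\phi((gh)^n)$ cancel before any $D$-errors are incurred, one is left with only an $O(D)$ surplus; dividing by $n$ and letting $n\to\infty$ gives $|\bar{\phi}(g)+\bar{\phi}(h)-\bar{\phi}(gh)|\le 2D(\phi)$, hence $D(\bar{\phi})\le 2D(\phi)$.

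The main obstacle is precisely the bookkeeping in this last step: bounding $\phi(g^n)$, $\phi(h^n)$ and $\phi((gh)^n)$ separately and summing only produces the constant $4$, and extracting $2$ forces the cancellation just described — it is this phenomenon that makes $2$ (and hence the gap $1/2$ in Bavard duality) the sharp constant. For an alternative organisation of the same computation one can pass to bounded cohomology: since $\bar\phi-\phi$ is bounded, $[\delta^1\bar\phi]=[\delta^1\phi]$ in $\Hrm^2_b(G,\R)$, so the statement is a comparison between the defect of the distinguished homogeneous representative of this class and the $\ell^\infty$-norm of the cocycle $\delta^1\phi$; the quantitative heart of the matter is unchanged. (This is in any case a standard fact about homogenisation of quasimorphisms.)
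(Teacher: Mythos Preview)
The paper does not give its own proof of this proposition; it simply refers the reader to Lemma~2.58 of Calegari's monograph. So there is nothing in the paper to compare your attempt against beyond that citation.

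Your first paragraph is correct and standard: the Fekete argument yields $|n\bar\phi(g)-\phi(g^n)|\le D(\phi)$ for every $n\ge 1$, hence $\|\bar\phi-\phi\|_\infty\le D(\phi)$, and the triangle inequality then gives the crude bound $D(\bar\phi)\le 4D(\phi)$.

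The second paragraph, however, does not actually establish the factor $2$. You assert that a ``well-chosen factorisation linking $(gh)^n$ to $g^n$ and $h^n$'' makes the $\Theta(n)$ contributions cancel and leaves ``only an $O(D)$ surplus'', but no such factorisation is exhibited, and in fact the quantity $\phi(g^n)+\phi(h^n)-\phi((gh)^n)$ is \emph{not} $O(D)$: by your own Fekete bound it equals $n\bigl(\bar\phi(g)+\bar\phi(h)-\bar\phi(gh)\bigr)+O(D)$, so its linear growth rate is precisely the defect you are trying to bound. (Note too that an $O(D)$ surplus divided by $n$ would tend to $0$, not $2D$, so the sentence is internally inconsistent.) Passing through $\phi(g^nh^n)$ does not help either, since $|\phi(g^nh^n)-\phi((gh)^n)|$ is again of order $n$ in general. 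Your closing remark that ``extracting $2$ forces the cancellation just described'' is a description of the difficulty rather than a resolution of it; the improvement from $4$ to $2$ requires a genuine additional argument, which is what the cited lemma supplies and which your sketch does not contain.
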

See Lemma 2.58 of \cite{calegari:scl} for a proof. For what follows we will \emph{always} decorate homogeneous quasimorphisms with a bar-symbol, even if they are not explicitly induced by a non-homogeneous quasimorphism. 
We refer the reader to \cite{frigerio} and \cite{calegari:scl} for references on quasimorphisms and stable commutator length.

If $g_1$ and $g_2$ lie in the same conjugacy class of $G$ then $\bar{\phi}(g_1) = \bar{\phi}(g_2)$, hence homogeneous quasimorphisms are class functions.
The key ingredient to calculate gaps in stable commutator length is Bavard's Duality Theorem:
\begin{thm} \label{thm:Bavards duality}
\cite{bavard} Let $G$ be a group and let $g \in G'$. Then
\[
\scl(g) = \sup_{\bar{\phi} \in \mathcal{Q}(G)} \frac{|\bar{\phi}(g)|}{2 D(\bar{\phi})}.
\]
\end{thm}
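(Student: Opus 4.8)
I would prove the two inequalities of Bavard duality separately, following \cite{bavard} (see also \cite{calegari:scl}). For the inequality $\scl(g) \ge \sup_{\bar\phi}\frac{|\bar\phi(g)|}{2D(\bar\phi)}$, the elementary direction, I would start from the estimate that a homogeneous quasimorphism satisfies $|\bar\phi([a,b])| \le D(\bar\phi)$ for every commutator: since $\bar\phi$ is alternating and constant on conjugacy classes, $\bar\phi(ab) + \bar\phi(a^{-1}b^{-1}) = \bar\phi(ab) - \bar\phi(ba) = 0$, and as $[a,b] = (ab)(a^{-1}b^{-1})$ the defect inequality yields the claim. Telescoping over a product of $k$ commutators gives $\bigl| \bar\phi\bigl(\prod_{i=1}^{k}[a_i,b_i]\bigr) \bigr| \le (2k-1) D(\bar\phi)$. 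Applying this to $g^n$ expressed as a product of $\cl(g^n)$ commutators, using $|\bar\phi(g^n)| = n|\bar\phi(g)|$, dividing by $2n$ and letting $n \to \infty$, gives $\frac{|\bar\phi(g)|}{2D(\bar\phi)} \le \scl(g)$ for every $\bar\phi \in \mathcal{Q}(G)$.

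For the reverse inequality I would reformulate $\scl$ homologically and apply Hahn--Banach. Let $B_1^H(G;\R)$ be the space of real group $1$-boundaries in the bar complex of $G$, modulo the subspace generated by all $hgh^{-1} - g$ and all $g^n - ng$, equipped with the filling seminorm $\|c\| = \inf\{\|A\|_1 : A \in C_2(G;\R),\ \partial_2 A \equiv c\}$, where $\|A\|_1$ is the $\ell^1$-mass of the $2$-chain $A$. The first step is to verify that $\|\cdot\|$ is genuinely a seminorm — subadditivity from gluing $2$-chains, homogeneity from passing to powers — and that $\scl(g) = \tfrac12\|g\|$ for $g \in G'$; the substantive content here is that an efficient rational $2$-chain bounding $g$ can be promoted to an admissible bounding surface of no greater normalised complexity, so that the combinatorial and topological definitions of $\scl$ coincide.

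The second step is the duality itself. By Hahn--Banach, $\|g\| = \sup\{f(g) : f \in (B_1^H(G;\R))^{*},\ \|f\|^{*} \le 1\}$, so it remains to identify this dual. A bounded functional $f$ on $(B_1^H(G;\R),\|\cdot\|)$ is a function $f \col G \to \R$ vanishing on every $hgh^{-1}-g$ and every $g^n - ng$ — that is, a homogeneous class function — and a brief computation (pairing $f$ with the $2$-chains $[a|b]$ gives $D(f) \le \|f\|^{*}$, while the defect bound applied to an arbitrary $2$-chain gives the converse) shows $\|f\|^{*} = D(f)$. Hence the functionals of dual norm $\le 1$ are exactly the homogeneous quasimorphisms with $D(\bar\phi) \le 1$, homomorphisms contributing nothing as they lie in the kernel of both sides. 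Combining, $\scl(g) = \tfrac12\|g\| = \tfrac12\sup\{\bar\phi(g) : D(\bar\phi) \le 1\} = \sup_{\bar\phi}\frac{|\bar\phi(g)|}{2D(\bar\phi)}$, the last equality by rescaling $\bar\phi$.

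I expect the main obstacle to be the identification $\scl(g) = \tfrac12\|g\|$, and in particular the inequality $\scl(g) \le \tfrac12\|g\|$: it is not formal that the combinatorial filling norm in the bar complex is no smaller than the surface-theoretic $\scl$, and proving it needs a careful ``Hopf-type'' argument clearing denominators and converting an arbitrary efficient rational $2$-chain into a genuine weighted bounding surface of the same normalised Euler characteristic, discarding sphere and disc pieces (and handling non-orientable components) along the way. Once that dictionary and the computation $\|f\|^{*} = D(f)$ are in place, the commutator estimate of the first direction and the Hahn--Banach step are routine.
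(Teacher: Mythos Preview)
The paper does not give its own proof of this theorem: it is stated with attribution to Bavard and the reader is referred to \cite{calegari:scl} for a proof and generalisation. Your outline is a correct sketch of the standard argument found in those references --- the elementary direction via the commutator estimate $|\bar\phi([a,b])|\le D(\bar\phi)$ and telescoping, and the hard direction via the filling (Gersten) norm on $B_1^H(G;\R)$ together with Hahn--Banach and the identification of the dual norm with the defect --- so there is nothing to compare against beyond noting that you have reproduced the approach the paper is citing.
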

See \cite{calegari:scl} for a proof and a generalisation of this statement.
This theorem allows us to estimate stable commutator length using (homogeneous) quasimorphisms. It can be shown that 
the supremum in Bavard's Duality Theorem is obtained. That is, for every element $g \in G'$ there is a homogeneous quasimorphism  $\bar{\phi}$ with $D(\bar{\phi}) = 1$ such that $\scl(g) = \bar{\phi}(g)/2$. These quasimorphisms are called \emph{extremal} and were studied in \cite{calegari:extremal}.

\subsection{(Spectral) Gaps in scl} \label{subsec:spectral gaps of scl}

It was shown by \cite{DH} that every non-trivial element $w \in \F_n'$ in the commutator subgroup of a non-abelian free group satisfies that $\scl(w) \geq 1/2$ and that every non-trivial commutator $[w_1,w_2] \in \F_n$ satisfies $\scl([w_1,w_2]) = 1/2$.

Using the monotonicity of scl we may conclude that for an arbitrary group $G$ every commutator $[g_1,g_2] \in G'$ satisfies $\scl([g_1,g_2]) \leq 1/2$. On the other hand, some elements $g \in G'$ satisfy $\scl(g) = 0$ for trivial reasons, for example if they are torsions or a positive power of this element is conjugate to a negative power of this element.

We call the infimum of $\{ scl(g) > 0 \mid  g \in G' \}$ the \emph{gap of $\scl$}, often called the \emph{spectral gap}, and say that a group \emph{has a gap in scl} if this number is positive. Many classes of ``negatively curved'' groups have a gap in scl.
\begin{itemize}
\item Residually free groups have a gap of exactly $1/2$ by Duncan and Howie \cite{DH}.
\item Mapping class groups of closed orientable surfaces, possibly with punctures, have a gap depending on the surface; see \cite{BBF}.
\item Hyperbolic groups have a gap which depends on the hyperbolicity constant and the number of generators; see \cite{calegari_fujiwara}.
\item Some classes of groups may not have a uniform gap but the first accumulation point on conjugatcy classes of positive $\scl$ may be uniformly bounded away from zero. For example for non-elementary, torsion-free hyperbolic groups and for the fundamental groups of closed hyperbolic manifolds this accumulation point is at least $1/12$; see Theorem B of \cite{calegari_fujiwara} and see Theorem 3.11 of \cite{calegari:scl}.
\item Sometimes, one may control $\scl$ on certain generic group elements. If $G = G_1 \star G_2$ is the free product of two torsion-free groups $G_1$ and $G_2$ and $g \in G'$ does not conjugate into one of the factors, then $\scl(g) \geq 1/2$; see \cite{chen} and \cite{Ivanov-Klyachko}. Similarly, if $G = A \star_C B$ and $g \in G'$ does not conjugate into one of the factors and such that $C g C$ does not contain a copy of any conjugate of $g^{-1}$ then $\scl(g) \geq 1/12$. See Theorem D of \cite{calegari_fujiwara} for the first proof of this gap and \cite{scl_in_bs_groups} for the sharp gap and a generalisation to graphs of groups.
\item Baumslag--Solitar groups have a sharp uniform gap of $1/12$; see \cite{scl_in_bs_groups}.
\end{itemize}
Note that this list is not meant to be comprehensive. By monotinicity, having a gap in scl may serve as an obstruction for group embeddings. If $H$ and $G$ are non-abelian groups with $H \inj G$ and $C > 0$ is such that every non-trivial element $g \in G'$ satisfies $\scl(g) \geq C$ then so does every non-trivial element of $H'$.

\subsection{Generalised Quasimorphisms} \label{subsec:generalised qm}
It is possible to generalise quasimorphisms $\phi \col G \to \R$ to maps $\Phi \col G \to H$ for $G,H$ \emph{arbitrary groups}. Two quite different proposals for such a generalisation come from Fujiwara--Kapovich (\cite{fuji-kapo}) and Hartnick--Schweitzer (\cite{hartnick-schweitzer}). Whereas the former maps are quite restrictive, the latter type of maps are very rich.
The ``letter-quasimorphisms'' defined in this paper will be quasimorphisms as defined by Hartnick--Schweitzer as shown at the end of Subsection \ref{subsec:letter-quasimorphisms and well-behaved letter quasimorphisms}.
Adapting the definition of \cite{hartnick-schweitzer} we call a map $\Phi \col G \to H$ between arbitrary groups a \emph{quasimorphism} if for every (ordinary) quasimorphism $\alpha \col H \to \R$,  $\alpha \circ \Phi \col G \to \R$, i.e. the pullback of $\alpha$ to $G$ via $\Phi$, defines a quasimorphism on $G$.
A map $\phi \col G \to \R$ is a quasimorphism in the sense of Hartnick--Schweitzer if and only if it is an ordinary quasimorphism.

The quasimorphisms $G \to \R$ constructed in this paper will be all pullbacks of the most basic quasimorphisms $\F_2 \to \R$ via letter-quasimorphisms $G \to \Acl \subset \F_2$; see Remark \ref{rmk:quasimorphisms are pullback of hs qm}.

\subsection{Brooks Quasimorphisms} \label{subsec:brooks and 2-brooks}
For what follows $\F_2$ will denote the group on two generators $\at$ and $\bt$.
A word $w = \xt_1 \cdots \xt_k \in F(\{ \at, \bt \} ) = \F_2$ is called \emph{reduced} if it has no backtracking. 
Unless stated otherwise \emph{we will always assume that elements in the free group are represented by reduced words}.
A sub-letter $\xt_i$ is called a \emph{power of $\at$ (or $\bt$)} if 
$\xt_i \in \{ \at, \at^{-1} \}$ (or $\xt_i \in \{ \bt, \bt^{-1} \}$).
Furthermore, $w$ is called \emph{alternating} if the letters of $w$ alternate between an element in $\{ \at, \at^{-1} \}$ and an element in $\{ \bt, \bt^{-1} \}$. The set of alternating words of $\F_2 = \langle \at, \bt \rangle$ is denoted by $\mathcal{A}$.
A word $v = \yt_1 \cdots \yt_l$ is called \emph{subword} of $w = \xt_1 \cdots \xt_k$ if $l \leq k$ and there is an $n \in \{0, \ldots, k-l \}$ such that $\yt_i = \xt_{i+n}$ for every $i \in \{ 1, \ldots, l \}$.  

Let $w \in \F_2$, $g \in \F_2$ be arbitrary reduced words. Let $\nu_w(g)$ 
be the number of (possibly overlapping) subwords of $w$ in the reduced word $g$. Then the function
\[
\eta_w = \nu_w - \nu_{w^{-1}}
\]
is a quasimorphism, called \emph{Brooks quasimorphism}. These maps were introduced by Brooks in \cite{brooks} to show that the vector space of
(homogeneous) quasimorphisms of the free group is infinite dimensional. Observe that for a letter $\xt$, the map $\eta_\xt$ is a homomorphism. Brooks quasimorphisms have been vastly generalised to other cases and groups; see \cite{EF} and \cite{me}.

Let $g,h \in \F_2$ and let $(c_1, c_2, c_3)$ be reduced words such that $g = c_1^{-1} c_2$, $h= c_2^{-1} c_3$, $h^{-1} g^{-1} = c_3^{-1} c_1$ are reduced words. 
Then it is easy to see that the value $\eta_w(g) + \eta_w(h) - \eta_w(gh)$ only depends on the first $|w|-1$ letters of the words $c_1, c_2, c_3$, hence the defect is indeed finite.
There is an extremal Brooks quasimorphism to the basic commutator $[ \at, \bt ]$, namely $\eta_{\at \bt} - \eta_{ \bt \at }$. This and homomorphisms will be the only Brooks quasimorphisms occurring in this paper.

\begin{exmp} \label{exmp: extemal brooks quasimorphisms on free group}
Consider $[\at, \bt]$, the commutator of the letters $\at$ and $\bt$. Then it is easy to see that the quasimorphism $\eta_0 = \eta_{\at \bt} - \eta_{\bt \at}$ satisfies that $\eta_0([\at,\bt])=\bar{\eta_0}([\at,\bt])=2$, $D(\eta_0)=1$ and $D(\bar{\eta}_0) = 2$. As usual, $\bar{\eta}_0$ denotes the homogenisation of $\eta_0$. 
By Bavard's Duality Theorem (\ref{thm:Bavards duality}) we may estimate $\scl([\at,\bt ]) \geq \bar{\eta}([\at, \bt])/2 D(\bar{\eta}) = 1/2$ and, as $\scl([\at,\bt]) \leq 1/2$ (see Subsection \ref{subsec:spectral gaps of scl}), we conclude $\scl([\at,\bt])=1/2$ and see that $\bar{\eta}_0$ is extremal.
\end{exmp}

\subsection{Bounded Cohomology}
We define (bounded) cohomology of discrete groups and state its basic properties.
We refer the reader to \cite{frigerio} for a thorough treatment of the bounded cohomology of discrete groups.

Let $G$ be a group, let $V$ be a $\Z G$-module and set $C^n(G, V) = \{ f \col G^n \rightarrow V \}$. For what follows, $V = \Z$ or $V = \R$
and we think of $V$ as a $\Z G$-module with trivial action.
Let $\| \cdot \|_\infty$ be the $l^\infty$-norm on $C^n(G, \mathbb{R})$ and set 
\[
C^n_b(G, V) = \{ f \in C^n(G,V) \mid \|f \|_\infty < \infty \} \subset C^n(G,V)
\]
Define the well-known coboundary maps for the inhomogeneous resolution $\delta^n \col C^n(G, V) \rightarrow C^{n+1}(G, V)$ via
\begin{equ*}{rl}
 \delta^n(f) (g_1, \ldots, g_{n+1}) = &f(g_2, \ldots, g_{n+1}) + \sum_{i=1}^n (-1)^i f(g_1, \ldots, g_i g_{i+1}, \ldots, g_{n+1}) + \cdots \\
  &\cdots (-1)^{n+1} f(g_1, \ldots, g_n)
\end{equ*}
and note that $\delta^n$ restricts to $\delta^n \col C^n_b(G,V) \to C^{n+1}_b(G,V)$.
Set
\begin{align*}
Z^n_{(b)}(G,V) &= \textrm{ker} \big( \delta^n \col C_{(b)}^n(G, V) \to C_{(b)}^{n+1}(G,V) \big) \\
 B^n_{(b)}(G,V) &= \textrm{im} \left( \delta^{n-1} \col C_{(b)}^{n-1}(G, V) \to C_{(b)}^n(G,V) \right)
\end{align*}
the (bounded) cocycles $Z^n_{(b)}(G,V)$ and the (bounded) coboundaries $B^n_{(b)}(G,V)$.
Then $\mathrm{H}^n(G,V) = Z^n(G,V) / B^n(G,V)$ is called the \emph{ordinary cohomology} and $\mathrm{H}_b^n(G,V) = Z_b^n(G,V) / B_b^n(G,V)$ is called the \emph{bounded cohomology} of $G$ with coefficients in $V$. The embedding $C_b^n(G, \R) \inj C^n_(G, \R)$ induces a map $c^n \col \mathrm{H}_b^n(G,V) \to \mathrm{H}^n(G,V)$ called the \emph{comparison map}.

Let $\phi \col G \to \R$ be a quasimorphism. Then $\delta^1 \phi \in C^2_b(G,\R)$ is a bounded $2$-cocycle and hence induces a class $[\delta^1 \phi] \in H^2_b(G, \R)$.
These classes are exactly the classes which lie in the kernel of the comparison map $c^2 \col \mathrm{H}_b^2(G,\R) \to \mathrm{H}^2(G,\R)$ described above.

(Bounded) Cohomology is functorial in both slots:
Any homomorphism $\alpha \col G \to H$ induces a well defined map $\alpha^* \col \Hrm_{(b)}^n(H,V) \to \Hrm_{(b)}^n(G,V)$ on (bounded) cohomology by pulling back cocycles on $H$ to cocycles on $G$ via $\alpha$.
The map $\Z \to \R$ induces a \emph{change of coefficients} map $\Hrm_{(b)}^n(G,\Z) \to \Hrm_{(b)}^n(G,\R)$.

\subsection{Bounded Cocycles via Actions on the Circle and Vice Versa}

This subsection states a classical correspondence between bounded cohomology and circle actions developed by Ghys; see \cite{ghys}.
Also, see \cite{cicle_quasimorph_modern} for a thorough treatment of this topic.
Let $\Homeorm^+(S^1)$ be the group of orientation preserving actions on the circle and let
\[
\Homeorm^+_\Z(\R) = \{ f \in \Homeorm^+(\R) \mid \forall n \in \Z, x \in \R: f(x+n) = f(x)+n \}
\]
the subgroup of the orientation preserving homeomorphisms of the real line that commutes with the integers.
By identifying $S^1 \cong \R / \Z$ we obtain a surjection
$\pi \col \Homeorm^+_\Z(\R)  \to \Homeorm^+(S^1)$. The kernel of $\pi$ is isomorphic to $\Z$ via 
$\iota \col n \mapsto f_n$ with $f_n \col x \mapsto x+n$ and lies in the center of $\Homeorm^+_\Z(\R)$.
Hence 
\[
\begin{tikzcd}
 0 \arrow[r] & \Z \arrow[r,"\iota"] & \Homeorm^+_\Z(\R)  \arrow[r,"\pi"] & \Homeorm^+(S^1) \arrow[r] \arrow[l, bend right, "\sigma"] & 1
\end{tikzcd}
\]
is a central extension and hence corresponds to a class $\eurm \in\Hrm^2(\Homeorm^+(S^1), \Z)$ the \emph{Euler-class}.
This class is represented by the cocycle $\omega \col (g,h) \mapsto \sigma(g) \sigma(h) \sigma(gh)^{-1} \in \Z$ by identifying $\Z$ with $\mathrm{ker}(\pi)=\mathrm{im}(\iota)$ and where $\sigma$ is any set-theoretic section $\sigma \col \Homeorm^+(S^1)  \to \Homeorm^+_\Z(\R)$. Let $\sigma_b$ be the unique section such that $\sigma_b(f)(0) \in [0,1)$. Then $\omega_b(g,h) = \sigma_b(g) \sigma_b(h) \sigma_b(gh)^{-1}$ satisfies that for all $g,h \in G$, $\omega_b(g,h) \in \{ 0,1 \}$ and hence is $\omega_b$ is a \emph{bounded} cocycle. We call the class $\eurm_b = [\omega_b] \in \Hrm^2_b(\Homeorm^+(S^1), \Z)$ the \emph{bounded Euler class}. See \cite{me_extensions} for the correspondence of group extensions and bounded cohomology.
The image of $\eurm_b$ under the change of coefficients $\Hrm^2_b(\mathrm{Homeo}^+(S_1), \Z) \to \Hrm^2_b(\mathrm{Homeo}^+(S_1), \R)$
is called the \emph{real bounded Euler class} and denoted by $\eurm^\R_b$.

Any action $\rho \col G \to \Homeorm^+(S^1)$ induces a  bounded class via $\rho^*\eurm_b \in \Hrm^2_b(G,\Z)$ (resp. $\rho^*\eurm_b^\R \in \Hrm^2_b(G, \R)$).
Ghys (\cite{ghys}) showed that two actions $\rho_1, \rho_2 \col G \to \mathrm{Homeo}^+(S_1)$ 
are \emph{semi-conjugate} if and only if $\rho_1^*\eurm_b=\rho_2^*\eurm_b \in \Hrm^2_b(G,\Z)$.
See \cite{cicle_quasimorph_modern}  for a precise definition of semi-conjugacy.
Similarly, we have $\rho^* \eurm^\R_b = 0 \in \Hrm^2_b(G,\R)$ if and only if $\rho$ is semi-conjugate to an action by rotations.

The class $\rho^*\eurm_b \in \Hrm^2_b(G, \Z)$
may be represented by a cocycle $\rho^*\omega_b \in Z^2_b(G, \Z)$ such that for 
every $g,h \in G$, $\rho^*\omega_b(g,h) \in \{0,1 \}$.
Surprisingly, a converse statement holds:
\begin{thm} \label{thm:ghys}
\footnote{See \cite{ghys}, see also Theorem 1.3 of \cite{cicle_quasimorph_modern}} Let $G$ be a discrete countable group and let $[\omega] \in H^2_b(G,\Z)$ be a class represented by a cocycle $\omega$, such that for all $g,h \in G$, $\omega(g,h) \in \{ 0, 1 \}$. Then there is an action $\rho \col G \to \mathrm{Homeo}^+(S^1)$ such that
$\rho^*\eurm_b = [\omega] \in \mathrm{H}_b^2(G, \Z)$.
\end{thm}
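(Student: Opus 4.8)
This is the substantive half of Ghys's correspondence between circle actions and bounded Euler classes; the converse implication — that any action $\rho$ yields a representative cocycle $\rho^*\omega_b$ with values in $\{0,1\}$ — was already recorded in the discussion of $\eurm_b$ above. The plan is to repackage the combinatorial datum $\omega$ as a $G$-invariant circular preorder on the countable set $G$ (equivalently, as a left-invariant preorder on the central extension $\tilde G = \Z \times_\omega G$ in which the central generator is positive and cofinal), to realise that preorder geometrically as an action on $S^1$, and finally to check that the bounded Euler cocycle of the resulting action agrees with $\omega$ up to a coboundary.

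\emph{From $\omega$ to a circular preorder.} After modifying $\omega$ by an (integral) coboundary I may assume it is normalised, $\omega(e,g) = \omega(g,e) = 0$ for all $g$; this keeps the cocycle bounded, which is all that matters, and makes $\tilde G$ a genuine group with identity $(0,e)$. Define $c \col G^3 \to \{-1,0,1\}$ by $c(g_1,g_2,g_3) = 0$ if two of the $g_i$ coincide and $c(g_1,g_2,g_3) = 2\,\omega(g_1^{-1}g_2,\, g_2^{-1}g_3) - 1$ otherwise. Setting $x = g_1^{-1}g_2$, $y = g_2^{-1}g_3$, $z = g_3^{-1}g_4$, the four-term identity for $c$ on $(g_1,g_2,g_3,g_4)$ collapses, after cancellation, to $2\big(\omega(y,z) - \omega(xy,z) + \omega(x,yz) - \omega(x,y)\big) = 0$, i.e. the $2$-cocycle identity for $\omega$; left-invariance $c(hg_1,hg_2,hg_3) = c(g_1,g_2,g_3)$ is immediate from the formula; and the normalisation makes $c$ alternating on the diagonal-free locus, except that on some transpositions one only gets consistency when $\omega(g,g^{-1}) = 1$. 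Where this fails one instead keeps track of ties: the honest object is a $G$-invariant circular \emph{preorder} on $G$, degenerate precisely on the orbits of those elements that will act with fixed points. (One may phrase all of this symmetrically on $\tilde G$: the generator $z$ should be positive, with $\{z^{n} : n \in \Z\}$ cofinal.)

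\emph{Geometric realisation.} Collapsing the tie-equivalence produces a countable $G$-set $Q$ with an honest $G$-invariant circular order. Any countable circularly ordered set embeds into $S^1$ order-preservingly; the $G$-action on the image then extends, by filling in the complementary open arcs, to an action $\rho \col G \to \Homeorm^+(S^1)$ by homeomorphisms — this is exactly the point at which the hypothesis that $G$ is countable is used. Equivalently one realises the preorder on $\tilde G$ inside $\R$, obtaining a compatible $\tilde\rho \col \tilde G \to \Homeorm^+_\Z(\R)$ in which $z$ acts as $x \mapsto x+1$, and composes with $\pi$ to descend to $\rho$.

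\emph{Matching the bounded Euler class, and the main obstacle.} The realisation carries a distinguished point $\xi_0 \in S^1$, the image of (the class of) $e$. With the section $\sigma_b$ normalised at $\xi_0$, the pulled-back cocycle $\rho^*\omega_b(g,h) = \sigma_b(\rho(g))\sigma_b(\rho(h))\sigma_b(\rho(gh))^{-1} \in \{0,1\}$ reads off the cyclic position of the triple $(\xi_0,\, \rho(g)\xi_0,\, \rho(gh)\xi_0)$, which by construction is governed by $c(e,g,gh) = 2\omega(g,h)-1$; hence $\rho^*\omega_b$ differs from $\omega$ by a coboundary and $\rho^*\eurm_b = [\omega] \in \Hrm^2_b(G,\Z)$. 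The algebraic heart (cocycle identity $\Rightarrow$ circular-cocycle identity) is clean, so I expect the real difficulty to be bookkeeping: making the degenerate triples, the normalisation of $\omega$, the treatment of elements acting with fixed points, and the half-open conventions in the embedding $Q \inj S^1$ all compatible at once, so that the Euler cocycle matches $\omega$ as a bounded \emph{class} rather than merely up to bounded error — together with the routine but careful verification that the filled-in action is genuinely continuous and that the central $\Z$ really becomes the integer translations.
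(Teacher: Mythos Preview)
The paper does not prove this theorem: it is stated with a footnote attributing it to Ghys \cite{ghys} (see also \cite{cicle_quasimorph_modern}) and is then used as a black box in the proof of Theorem~\ref{thm:main}. So there is no ``paper's own proof'' to compare against.

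That said, your sketch is a recognisable outline of the standard argument. The passage from a $\{0,1\}$-valued bounded $2$-cocycle to a left-invariant circular (pre)order on $G$ via $c(g_1,g_2,g_3) = 2\omega(g_1^{-1}g_2,g_2^{-1}g_3)-1$ on distinct triples is the right move, and the reduction of the circular-cocycle identity to $\delta^2\omega = 0$ is correct in spirit. The dynamical realisation step --- embedding a countable circularly ordered set into $S^1$ and extending the action over gaps --- is where countability is genuinely needed, as you note. Your honest acknowledgement that the degenerate triples, the normalisation, and the half-open conventions must all be made compatible is apt: these are exactly the places where a complete proof requires care, and your sketch does not fully resolve them (in particular, the claim that $c$ is alternating needs the cocycle condition applied to triples with repetitions, and the collapse of ties to an honest circular order must be shown to still carry a $G$-action). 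For a clean treatment handling these points, the reference \cite{cicle_quasimorph_modern} cited in the footnote is the appropriate place to look.
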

 This allows us to show that certain quasimorphisms are induced by a circle action $\rho \col G \to \Homeorm^+(S^1)$ without explicitly constructing $\rho$.

\section{Letter-Thin Triples and the Maps $\alpha$ and $\beta$} \label{sec:letter-thin triples and alpha, beta}
The set of alternating words $\Acl \subset \F_2$ is the set of all words in the letters $\at$ and $\bt$ where the letters alternate between $\{ \at, \at^{-1} \}$ and $\{ \bt, \bt^{-1} \}$.
For example, $\at \bt \at^{-1} \bt^{-1}$ is an alternating word but $\at \bt \bt \at^{-1} \bt^{-1} \bt^{-1}$ is not.
We will define maps $\alpha, \beta \col \Acl \to \Acl$ and develop their basic properties in Subsection \ref{subsec:alpha and beta}.
We also define a version of these maps on $\bar{\Acl}_0$, which are conjugatcy classes of \emph{even}-length words of $\Acl$ to understand how $\alpha, \beta$ behave on powers; see Proposition \ref{prop:powers of alpha, beta}.
In Subsection \ref{subsec:letter-thin and alpha and beta} we define certain triples $(x_1,x_2,x_3)$ where $x_1,x_2,x_3 \in \Acl$ called \emph{letter-thin triples}. We think of them as the sides of (thin) triangles; see Figure \ref{fig:triangles}.
Note that such triples are not triangles in the usual sense, i.e. the sides $x_1,x_2,x_3$ do \emph{not} correspond to the geodesics between three points in some metric space like a Cayley graph. Letter-thin triples will be crucial in estimating the defect of the quasimorphisms we construct in this paper.
We will see that $\alpha$ and $\beta$ map letter-thin triples to letter-thin triples in Lemma \ref{lemma:alpha keeps thin.}, which is the main technical result of this paper.
In Subsection \ref{subsec:brooks qm, homomorphisms and letter-thin} we see that basic Brooks quasimorphisms and homomorphisms behave well on letter-thin triples.
We usually prove the properties we state for $\alpha, \beta$ just for $\alpha$ and note that all properties may be deduced analogously for $\beta$ by interchanging $\at$ and $\bt$; see Proposition \ref{prop:cutting words}, (\ref{prop-case:interchange a b}).

\subsection{The Maps $\alpha$ and $\beta$, Definition and Properties} \label{subsec:alpha and beta}
We will describe two maps $\alpha, \beta \col \mathcal{A} \to \mathcal{A}$ sending alternating words to alternating words.
Define $\Scl_\at^+, \Scl_\at^- \subset \Acl$ as 
\begin{align*}
\Scl_\at^+ & = \{ \at \yt_1 \at \cdots \at \yt_l \at \mid \yt_i \in \{ \bt, \bt^{-1} \}, l \in \N \} \\
\Scl_\at^- & = \{ \at^{-1} \yt_1 \at^{-1} \cdots \at^{-1} \yt_l \at^{-1} \mid \yt_i \in \{ \bt, \bt^{-1} \}, l \in \N \}
\end{align*}
that is, $\Scl_\at^+$ is the set of alternating words which start and end in $\at$ and don't contain the letter $\at^{-1}$ and $\Scl_\at^-$ is the set of alternating words which start and end in $\at^{-1}$ and don't contain the letter $\at$.
Note that we assume $0 \in \N$, i.e. $\at \in \Scl_\at^+$ and $\at^{-1} \in \Scl_\at^-$.

Analogously we define the sets $\Scl_\bt^+ \subset \Acl$ and  $\Scl_\bt^- \subset \Acl$ as
\begin{align*}
\Scl_\bt^+ & = \{ \bt \xt_1 \bt \cdots \bt \xt_l \bt \mid \xt_i \in \{ \at, \at^{-1} \}, l \in \N \} \\
\Scl_\bt^- & = \{ \bt^{-1} \xt_1 \bt^{-1} \cdots \bt^{-1} \xt_l \bt^{-1} \mid \xt_i \in \{ \at, \at^{-1} \}, l \in \N \}
\end{align*}
and observe that $\bt \in \Scl_\bt^+$ and $\bt^{-1} \in \Scl_\bt^-$.

We will decompose arbitrary words $w \in \Acl$ as a \emph{unique} product of elements in $\{ \bt, \bt^{-1} \}$ and $\Scl_\at^+ \cup \Scl_\at^-$:
\begin{prop} \label{prop: a decomposition}
Let $w \in \Acl$ be an alternating word. Then there are $\yt_0, \ldots, \yt_l$ and $s_1, \ldots, s_l$ such that
\[
w = \yt_0 s_1 \yt_1 s_2 \cdots \yt_{l-1} s_l \yt_l
\]
where $\yt_i \in \{\bt, \bt^{-1} \}$ except that $\yt_0$ and/or $\yt_l$ may be empty and $s_i \in \Scl_\at^+ \cup \Scl_\at^-$. Moreover, $s_i$ alternates between $\Scl_\at^+$ and $\Scl_\at^-$, i.e. there is no $i \in \{ 1, \ldots, l-1 \}$ such that $s_i, s_{i+1} \in \Scl_\at^+$  or 
$s_i, s_{i+1} \in \Scl_\at^-$.
This expression is unique.
\end{prop}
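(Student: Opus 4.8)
The plan is to prove existence by a deterministic left-to-right parse of $w$, and uniqueness by describing the blocks $s_i$ in terms of $w$ alone.

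For existence, I would write $w = z_1 z_2 \cdots z_m$ as a concatenation of single letters $z_j \in \{\at,\at^{-1},\bt,\bt^{-1}\}$. Since $w$ is alternating the types of the $z_j$ strictly alternate; in particular between any two $\at$-letters of $w$ there is exactly one letter, necessarily a $\bt$-letter, so consecutive $\at$-letters of $w$ occur two positions apart. List the positions of the $\at$-letters as $i_1 < \dots < i_k$, so $i_{j+1} = i_j + 2$. If $k = 0$ then $w$ has length at most $1$, hence is empty or a single $\bt$-letter, and one takes $l = 0$, $\yt_0 = w$. If $k \geq 1$, partition $\{1,\dots,k\}$ into the maximal runs of consecutive indices on which $\sign(z_{i_j})$ is constant, say $I_1,\dots,I_l$ in order; by maximality the sign flips from $I_p$ to $I_{p+1}$. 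For each $p$ let $s_p$ be the subword of $w$ occupying positions $i_{\min I_p}$ through $i_{\max I_p}$: it is alternating (a subword of $w$), begins and ends with an $\at$-letter, and all its $\at$-letters carry the common sign of $z_{i_{\min I_p}}$, so $s_p \in \Scl_\at^+$ or $s_p \in \Scl_\at^-$ accordingly, and the $s_p$ alternate between $\Scl_\at^+$ and $\Scl_\at^-$. Finally set $\yt_0$ to the (possibly empty) prefix of $w$ before position $i_1$, $\yt_l$ to the (possibly empty) suffix after position $i_k$, and for $0 < p < l$ let $\yt_p$ be the single letter in position $i_{\max I_p}+1 = i_{\min I_{p+1}}-1$. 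Since $w$ is alternating, $\yt_0$ and $\yt_l$ are empty or a single $\bt$-letter and each $\yt_p$ with $0<p<l$ is a single $\bt$-letter, and concatenating $\yt_0, s_1, \yt_1, \dots, s_l, \yt_l$ recovers $w$.

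For uniqueness, I would show that in any decomposition of the stated form the set $P$ of positions of $w$ covered by $s_1\cdots s_l$ depends only on $w$: namely $P$ consists of the positions of all $\at$-letters of $w$ together with the positions of those $\bt$-letters of $w$ that are immediately preceded and immediately followed by $\at$-letters of the same sign. Indeed every $\at$-letter lies in some $s_p$ since each $\yt_p$ is empty or a $\bt$-letter; every interior $\bt$-letter of an $s_p\in\Scl_\at^{\pm}$ is flanked by two $\at$-letters of equal sign; and a $\bt$-letter equal to some $\yt_p$ either sits at an end of $w$ (when $p\in\{0,l\}$) or is flanked by the last $\at$-letter of $s_p$ and the first $\at$-letter of $s_{p+1}$, which have opposite signs because the $s_i$ alternate between $\Scl_\at^+$ and $\Scl_\at^-$; hence such a $\yt_p$ is not in $P$. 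Since for $0<p<l$ the separator $\yt_p$ is nonempty, the subwords $s_1,\dots,s_l$ are precisely the maximal runs of consecutive positions lying in $P$; this pins down $l$ and each $s_i$, after which the $\yt_i$ are forced to be the intervening (possibly empty) pieces. Therefore the decomposition is unique.

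The argument is entirely bookkeeping; the points needing care are the degenerate cases --- a word containing no $\at$-letter, where $l=0$, and the possibly-empty end pieces $\yt_0,\yt_l$ --- and the exact formulation of the position set $P$, which is what forces two a priori different decompositions to have the same blocks. I expect this last point to be the main, if minor, obstacle.
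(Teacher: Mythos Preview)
Your proof is correct but proceeds differently from the paper's. For existence, the paper argues abstractly: since $\at\in\Scl_\at^+$ and $\at^{-1}\in\Scl_\at^-$, \emph{some} decomposition $w=\yt_0 s_1\yt_1\cdots s_n\yt_n$ always exists (take each $s_i$ to be a single $\at$-letter), and then a decomposition with $n$ minimal must have the $s_i$ alternating between $\Scl_\at^+$ and $\Scl_\at^-$, since otherwise two adjacent blocks of the same type could be merged into one, contradicting minimality. Your argument instead builds the decomposition explicitly by grouping maximal runs of same-sign $\at$-letters; this is more constructive and makes the structure of the $s_i$ transparent, at the cost of a bit more bookkeeping. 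For uniqueness, the paper simply asserts that comparing two such expressions shows they agree, whereas you give an intrinsic characterisation of the positions covered by the $s_i$ (the $\at$-letters together with the $\bt$-letters flanked by equal-sign $\at$-letters) and deduce that the blocks are forced. Your uniqueness argument is more fleshed out than the paper's; the paper's existence argument is slicker. Either route is fine here.
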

We will call this way of writing $w$ the \emph{ $\at$-decomposition of $w$}.
Analogously, we may also write $w \in \Acl$ as
\[
w = \xt_0 t_1 \xt_1 t_2 \cdots \xt_{l-1} t_l \xt_l
\] 
(possibly with a different $l$), where $\xt_i \in \{ \at, \at^{-1} \}$ except that $\xt_0$ and / or $\xt_l$ may be empty and $t_i \in \Scl_\bt^+ \cup \Scl_\bt^-$ where 
$t_i$ alternate between $\Scl_\bt^+$ and $\Scl_\bt^-$. We will call this way of writing $w$ the \emph{$\bt$-decomposition of $w$}.

\begin{proof} (of Proposition \ref{prop: a decomposition})
Let $w \in \Acl$ be an alternating word. 
Since $\at \in \Scl_\at^+$ and $\at^{-1} \in \Scl_\at^-$, we may always find some $s_i \in \Scl_\at^+ \cup \Scl_\at^-$ and some $\yt_i \in \{ \bt, \bt^{-1} \}$ such that
\[
w = \yt_0 s_1 \yt_1 s_2 \cdots \yt_{n-1} s_n \yt_n
\]
with possibly $\yt_n$ and / or $\yt_0$ empty.

Now let $m$ be the minimal $n$ of all such products representing $w$ i.e.
\[
w = \yt_0 s_1 \yt_1 s_2 \cdots \yt_{m-1} s_m \yt_m.
\]
Suppose there is an $i \in \{ 1, \ldots, m-1 \}$ such that $s_i,s_{i+1} \in \Scl_\at^+$ (resp. $s_i,s_{i+1} \in \Scl_\at^-$).
Set $s' = s_i \yt_i s_{i+1}$ and note that $s' \in \Scl_\at^+$ (resp. $s' \in \Scl_\at^-$).
Then
\[
w = \yt_0 s_1 \yt_1 s_2 \cdots \yt_{i-1} s' \yt_{i+1} \cdots \yt_{m-1} s_m \yt_m
\]
which would contradict the minimality of $m$. Hence all $s_i$ alternate between $\Scl_\at^+$ and $\Scl_\at^-$. 
By comparing two such expressions we see that such an expression is further unique.
\end{proof}

\begin{defn} \label{defn:alpha and beta}
Let $w \in \Acl$ and let $w = \yt_0 s_1 \cdots \yt_{l-1} s_l \yt_{l}$ be the $\at$-decomposition of $w$. Then $\alpha \col \Acl \to \Acl$ is defined via
\[
\alpha \col w \mapsto
\yt_0 \xt_1 \yt_1 \xt_2 \cdots \yt_{l-1} \xt_l \yt_{l}
\]
with $\xt_i = \at$ if $s_i \in \Scl^+_\at$ and $\xt_i = \at^{-1}$ if $s_i \in \Scl^-_{\at}$.

Analogously suppose that $w = \xt_0 t_1 \xt_1 t_2 \cdots \xt_{l-1} t_l \xt_l$ is the $\bt$-decomposition of $w$,
with $l$ possibly different from above. We define the map $\beta \col \Acl \to \Acl$ via
\[
 \beta \col w \mapsto \xt_0 \yt_1 \xt_1 \yt_2 \cdots \xt_{l-1} \yt_l \xt_{l}
\]
with $\yt_i = \bt$ if $t_i \in \Scl^+_\bt$ and $\yt_i = \bt^{-1}$ if $t_i \in \Scl^-_{\bt}$.
\end{defn}

\begin{exmp}
Let $w = \bt \at \bt^{-1} \at \bt \at \bt^{-1} \at^{-1} \bt \at^{-1} \bt \at \bt \at^{-1}$.
Then the $\at$-decomposition of $w$ is
\[
w = \bt s_1 \bt^{-1} s_2 \bt s_3 \bt s_4
\] 
where $s_1 = \at \bt^{-1} \at \bt \at \in \Scl^+_\at$, $s_2 =  \at^{-1} \bt \at^{-1} \in \Scl_\at^-$, $s_3 = \at \in \Scl^+_\at$ and $s_4 = \at^{-1} \in \Scl^+_\at$.
Hence
\[
\alpha(w) = \bt \at \bt^{-1} \at^{-1} \bt \at \bt \at^{-1}.
\]
Observe that then $\alpha(\alpha(w)) = \alpha(w)$.
The $\bt$-decomposition of $\alpha(w)$ is
\[
\alpha(w) = t_1 \at t_2 \at^{-1} t_3 \at^{-1}
\]
where $t_1 = \bt \in \Scl_\bt^+$, $t_2 = \bt^{-1} \in \Scl_\bt^-$ and $t_3 = \bt \at \bt \in \Scl_\bt^+$. 
Hence
\[
\beta(\alpha(w)) = \bt \at \bt^{-1} \at^{-1} \bt \at^{-1}
\]
and similarly, we may see that $\alpha(\beta(\alpha(w))) = \bt \at \bt^{-1} \at^{-1} = [\bt, \at]$. Then both
$\alpha([\bt, \at]) = [\bt, \at]$  and $\beta([\bt, \at])= [\bt,\at]$.
We will formalise and use this behaviour later; see Proposition \ref{prop:cutting words} and Proposition \ref{prop:alpha on conjugacy classes decreases}.
\end{exmp}

The images of $\alpha$ and $\beta$ are obviously contained in the set of alternating words. Moreover, as the $s_i$ in the previous definition all alternate between $\Scl^+_{\at}$ and $\Scl^-_{\at}$, none of the consecutive $\xt_i$ have the same sign in the image of $\alpha$ and no consecutive $\yt_i$ have the same sign in the image of $\beta$.

\begin{prop} \label{prop:cutting words} 
The maps $\alpha, \beta \col \Acl \to \Acl$ have the following properties:
\begin{enumerate}
\item \label{prop-case:alpha alternating} For every $w \in \Acl$, $\alpha(w^{-1}) = \alpha(w)^{-1}$ and $\beta(w^{-1}) = \beta(w)^{-1}$
\item \label{prop-case:interchange a b} $\psi \circ \alpha = \beta \circ \psi$ and $\psi \circ \beta = \alpha \circ \psi$, where $\psi \col \F_2 \to \F_2$ is the automorphism defined via $\psi \col \at \mapsto \bt, \bt \mapsto \at$.
\item  For any $w \in \Acl$, $\alpha(\alpha(w)) = \alpha(w)$. Moreover, $|\alpha(w)| \leq |w|$ with equality if and only if $\alpha(w) = w$. The analogous statement holds for $\beta$.
\item \label{prop:cases,splitting} Let $v_1 \xt v_2$ be an alternating word with $v_1, v_2 \in \Acl$ and $\xt \in \{ \at, \at^{-1} \}$. Then $\alpha(v_1 \xt v_2)$ is equal in $\F_2$ to the element represented by the non-reduced word
$\alpha(v_1 \xt) \xt^{-1} \alpha(\xt v_2)$. The analogous statement holds for $\beta$.
\end{enumerate}
\end{prop}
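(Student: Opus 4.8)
The plan is to prove each of the four parts by direct computation with the $\at$- and $\bt$-decompositions of Proposition \ref{prop: a decomposition}, exploiting the uniqueness of these decompositions throughout.

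\textbf{Parts (\ref{prop-case:alpha alternating}) and (\ref{prop-case:interchange a b}).} For (\ref{prop-case:alpha alternating}), I would start from the $\at$-decomposition $w = \yt_0 s_1 \yt_1 \cdots s_l \yt_l$. Inverting gives $w^{-1} = \yt_l^{-1} s_l^{-1} \cdots s_1^{-1} \yt_0^{-1}$, and since inversion swaps $\Scl_\at^+ \leftrightarrow \Scl_\at^-$ and preserves alternation of the $s_i$, this is precisely the $\at$-decomposition of $w^{-1}$. Applying $\alpha$ to both and comparing letter-by-letter (each $\xt_i = \at$ becomes $\xt_i^{-1} = \at^{-1}$ exactly when $s_i \in \Scl_\at^+$ becomes $s_i^{-1} \in \Scl_\at^-$) yields $\alpha(w^{-1}) = \alpha(w)^{-1}$; the statement for $\beta$ follows by the symmetric argument, or from part (\ref{prop-case:interchange a b}). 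For (\ref{prop-case:interchange a b}), observe that $\psi$ sends the $\at$-decomposition of $w$ to the $\bt$-decomposition of $\psi(w)$ (it swaps $\Scl_\at^\pm$ with $\Scl_\bt^\pm$ and $\{\at,\at^{-1}\}$ with $\{\bt,\bt^{-1}\}$), and it then intertwines the letter-replacement rule defining $\alpha$ on $w$ with that defining $\beta$ on $\psi(w)$; this gives $\psi(\alpha(w)) = \beta(\psi(w))$, and the other identity follows since $\psi^2 = \mathrm{id}$.

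\textbf{Part (3).} For idempotence, note that the image $\alpha(w)$ has the property that no two consecutive powers of $\at$ in it agree in sign (remarked just before the proposition), so its $\at$-decomposition has every $s_i$ equal to a single letter $\xt_i \in \{\at,\at^{-1}\}$ and all $\yt_i$ nonempty (for interior indices); hence $\alpha$ acts as the identity on it. For the length inequality, each $s_i$ in the $\at$-decomposition of $w$ has length $|s_i| \geq 1$ and contributes a single letter to $\alpha(w)$, while the $\yt_i$ are carried over unchanged, so $|\alpha(w)| = l + (\text{number of nonempty } \yt_i) \leq \sum_i |s_i| + (\text{number of nonempty }\yt_i) = |w|$, with equality iff every $|s_i| = 1$, i.e. iff $w = \alpha(w)$.

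\textbf{Part (\ref{prop:cases,splitting}).} This is the one I expect to be the main obstacle, since it requires tracking how a single distinguished occurrence of a power of $\at$ interacts with the decomposition. Write $w = v_1 \xt v_2$ and consider the $\at$-decomposition of $w$. The letter $\xt$ either (a) sits strictly inside some block $s_j$, or (b) coincides with one of the boundary letters of $s_j$, or (c) equals one of the $\yt$-slots — but that last case is impossible since $\xt$ is a power of $\at$. I would do a case analysis: when $\xt$ is interior to $s_j = u \xt u'$ (so $u, u'$ are the alternating pieces on either side inside the block, both lying in $\Scl_\at^\pm$-compatible form), the $\at$-decomposition of $v_1\xt$ ends with the block $u\xt \in \Scl_\at^{\sign}$ and that of $\xt v_2$ begins with $\xt u'$, so $\alpha(v_1\xt)$ ends in $\xt$, $\alpha(\xt v_2)$ begins in $\xt$, and $\alpha(v_1\xt)\,\xt^{-1}\,\alpha(\xt v_2)$ reduces to a word with a single $\at$-power in the position corresponding to $s_j$, matching $\alpha(w)$. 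The boundary case is similar but shorter. The bookkeeping is routine but fiddly; the key point making it work is the uniqueness of the $\at$-decomposition, which forces the decompositions of $v_1\xt$, $\xt v_2$, and $w$ to be compatible on their overlap. The statement for $\beta$ is again obtained by applying $\psi$ and using part (\ref{prop-case:interchange a b}).
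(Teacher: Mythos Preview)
Your proposal is correct and follows essentially the same approach as the paper: direct computation via the uniqueness of the $\at$-decomposition for each of the four parts. The only organizational difference is in part~(\ref{prop:cases,splitting}), where the paper avoids your case split by starting instead from the $\at$-decompositions of $v_1\xt$ and $\xt v_2$ (whose last and first blocks both contain $\xt$ and hence lie in the same $\Scl_\at^\pm$, so they merge into a single block in the decomposition of $v_1\xt v_2$); this handles your ``interior'' and ``boundary'' cases uniformly.
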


\begin{proof}
To see $(1)$, note that if $w = \yt_0 s_1 \yt_1 \cdots \yt_{l-1} s_l \yt_{l}$ is the $\at$-decomposition of $w$, then
\[
\yt_l^{-1} s_l^{-1} \yt_{l-1}^{-1}  \cdots \yt_1^{-1} s_1^{-1} \yt_0^{-1} 
\]
is the $\at$-decomposition of $w^{-1}$. As $s_i^{-1} \in \Scl_\at^+$ if and only if $s_i \in \Scl_\at^-$ and $s_i^{-1} \in \Scl_\at^-$ if and only if $s_i \in \Scl_\at^+$ we can conclude that $\alpha(w^{-1}) = \alpha(w)^{-1}$.
The analogous argument holds for $\beta$.

Point $(2)$ is evident from the symmetric way $\alpha$ and $\beta$ have been defined. To see $(3)$, note that $\alpha$ replaces each of the subwords $s_i$ by letters $\at$ or $\at^{-1}$. These
have size strictly less than $|s_i|$ unless $s_i$ is the letter $\at$ or $\at^{-1}$ already. 
This shows $|\alpha(w)| \leq |w|$ with equality only if $\alpha(w) = w$ and it also shows that $\alpha \circ \alpha = \alpha$.

For (\ref{prop:cases,splitting}) suppose that the $\at$-decomposition of $v_1 \xt$ is $\yt^1_0 s^1_1 \yt^1_1 \cdots \yt^1_{l_1-1} s^1_{l_1}$ and the
$\at$-decomposition of $\xt v_2$ is $s^2_1 \yt^2_1 \cdots \yt^1_{l_1-1} s^2_{l_2} \yt^2_{l_2}$. Both, $s^1_{l_1}$ and $s^2_1$ lie in the same set $S_\at^+$ or $S_\at^-$ depending if $\xt = \at$ or $\xt = \at^{-1}$. Without loss of generality assume that $\xt = \at$. The $\at$-decomposition of $v_1 \xt v_2$ may be seen to be $\yt^1_0 s^1_1 \yt^1_1 \cdots \yt^1_{l_1-1} s \yt^2_1 \cdots \yt^2_{l_2-1} s^2_{l_2} \yt^2_{l_2}$
where $s \in S_\at^+$ is equal to $s^1_{l_1} \at^{-1} s^2_1$ in $\F_2$.
Hence $\alpha(v_1 \at) = \yt^1_0 \xt^1_1 \yt^1_1 \cdots \yt^1_{l_1-1} \at$, $\alpha(\at v_2) = \at \yt^2_1 \cdots \yt^2_{l_2-1} \xt^2_{l_2} \yt^2_{l_2}$ and 
\[
\alpha(v_1 \xt v_2) = \yt^1_0 \xt^1_1 \yt^1_1 \cdots \yt^1_{l_1-1} \at \yt^2_1 \cdots \yt^2_{l_2-1} \xt^2_{l_2} \yt^2_{l_2}.
\]
Comparing terms finishes the proposition.
\end{proof}

To study how the maps $\alpha, \beta \col \Acl \to \Acl$ behave on powers of elements we need to define a version of them on conjugacy classes.
Let $\bar{\mathcal{A}}_0$ be the set conjugacy classes of even length alternating words.
Note that then necessarily every two representatives $w_1,w_2 \in \Acl$ of the same conjugacy class in $\bar{\Acl}_0$ are equal up to cyclic permutation of the letters. This is, there are elements $v_1, v_2 \in \Acl$ such that
$w_1 = v_1 v_2$ and $w_2 = v_2 v_1$ as reduced words.
Hence every representative $v \in \Acl$ of an element in $\bar{\Acl}_0$ is automatically reduced.
\begin{rmk} \label{rmk:on conjugacy classes for acl}
Every reduced representative $w \in \Acl$ of a class in $\bar{\Acl}_0$ has the same length.
Every homogeneous quasimorphism $\bar{\phi} \col \F_2 \to \R$ depends only on conjugacy classes and hence induces a well-defined map $\bar{\phi} \col \bar{\Acl}_0 \to \R$.
We say that an element $[w] \in \bar{\Acl}_0$ \emph{lies in the commutator subgroup} if one (and hence any) representative $w$ of $[w]$ lies in the commutator subgroup of $\F_2$.
\end{rmk}

\begin{defn} \label{defn:maps alpha bar and beta bar}
Define the map $\bar{\alpha} \col \bar{\mathcal{A}}_0 \to \bar{\mathcal{A}}_0$ as follows:
Let $[w] \in \bar{\Acl}_0$. If $[w] = e$ set $\bar{\alpha}([w]) = e$. Else choose a representative $w \in \Acl$  of $[w]$ that starts with a power of $\at$ and, as $w$ has even length, ends in a power of $\bt$. Suppose that $w$ starts with the letter $\xt \in \{ \at, \at^{-1} \}$ and write $w= \xt w'$ for $w' \in \Acl$ such that $\xt w'$ is reduced.  
Then define $\bar{\alpha} \col \bar{\mathcal{A}}_0 \to \bar{\mathcal{A}}_0$ via
\[
\bar{\alpha} \col [w] \mapsto [\alpha(\xt w' \xt) \xt^{-1}] \in \bar{\Acl}_0.
\]
Define $\bar{\beta} \col \bar{\mathcal{A}}_0 \to \bar{\mathcal{A}}_0$ analogously: For every element $[w] \in \bar{\Acl}_0$ choose a representative $w \in \Acl$ which starts with the letter $\yt \in \{ \bt, \bt^{-1} \}$ and write $w = \yt w'$. Then define $\bar{\beta} \col \bar{\Acl}_0 \to \bar{\Acl}_0$ via
\[
\bar{\beta} \col [w] \mapsto [\beta(\yt w' \yt ) \yt^{-1} ] \in \bar{\Acl}_0.
\] 
\end{defn}

To see that $\bar{\alpha}, \bar{\beta} \col \bar{\Acl}_0 \to \bar{\Acl}_0$ are well-defined, suppose that $w_1, w_2 \in \Acl$ are both even alternating words which start in a power of $\at$ and both represent the same element $[w_1] = [w_2] \in \bar{\Acl}_0$. Let $\xt_1, \xt_2 \in \{ \at, \at^{-1} \}$ be the first letters of $w_1$ and $w_2$.
Then there are elements $v_1, v_2 \in \Acl$ such that $w_1 = \xt_1 v_1 \xt_2 v_2$ as a reduced word and
$w_2 = \xt_2 v_2 \xt_1 v_1$. Then, by $(3)$ of Proposition \ref{prop:cutting words}, 
\begin{align*}
\alpha(w_1 \xt_1) \xt_1^{-1} = \alpha(\xt_1 v_1 \xt_2 v_2 \xt_1) \xt_1^{-1} &=  \alpha(\xt_1 v_1 \xt_2) \xt_2^{-1} \alpha(\xt_2 v_2 \xt_1) \xt_1^{-1} \\
\alpha(w_2 \xt_2) \xt_2^{-1} =\alpha(\xt_2 v_2 \xt_1 v_1 \xt_2) \xt_2^{-1} &=  \alpha(\xt_2 v_1 \xt_1) \xt_1^{-1} \alpha(\xt_1 v_1 \xt_2) \xt_2^{-1}
\end{align*}
which are conjugate in $\F_2$ and so 
$[\alpha(w_1 \xt_1) \xt_1^{-1}] = [\alpha(w_2 \xt_2) \xt_2^{-1}]$. 
This shows that $\bar{\alpha}$ is well defined and analogously that $\bar{\beta}$ is well defined.

The definition of $\bar{\alpha}$ given above is useful for performing calculations.
 However, there is a more geometric way to think about $\bar{\alpha}$ and $\bar{\beta}$ analogous to the definition of $\alpha$ and $\beta$.
A common way to depict conjugacy classes in the free group is via labels on a circle: Let $w = \zt_1 \cdots \zt_n \in \F_2$ be a cyclically reduced word in the letters $\zt_i$. Then $w$ labels a circle by cyclically labelling the sides of the circle counterclockwise by $\zt_1, \zt_2, \ldots, \zt_n$ so that $\zt_n$ is next to $\zt_1$ on the circle.
Two cyclically reduced words $w \in \F_2$ then yield the same labelling up to rotation if and only if they define the same conjugacy class.

Let $[w] \in \bar{\Acl}_0$ be a conjugacy class of a word $w \in \Acl$ of even length that contains both at least one $\at$ and one $\at^{-1}$ as a subword. We may similarly define an $\at$-decomposition of such a cyclic labelling. 
One may show that in this geometric model the maps $\bar{\alpha}$ (resp. $\bar{\beta}$) can then be defined just like for $\alpha$ and $\beta$ by replacing the words in $\Scl_\at^+$ by $\at$ and the words in $\Scl_\at^-$ by $\at^-$.
If $[w] \in \bar{\Acl}_0$ does not contain both $\at$ and $\at^{-1}$ as subwords then $\bar{\alpha}([w])=e$ in both cases.
\begin{figure} 
  \centering

 \subfloat[]{\includegraphics[width=0.6\textwidth]{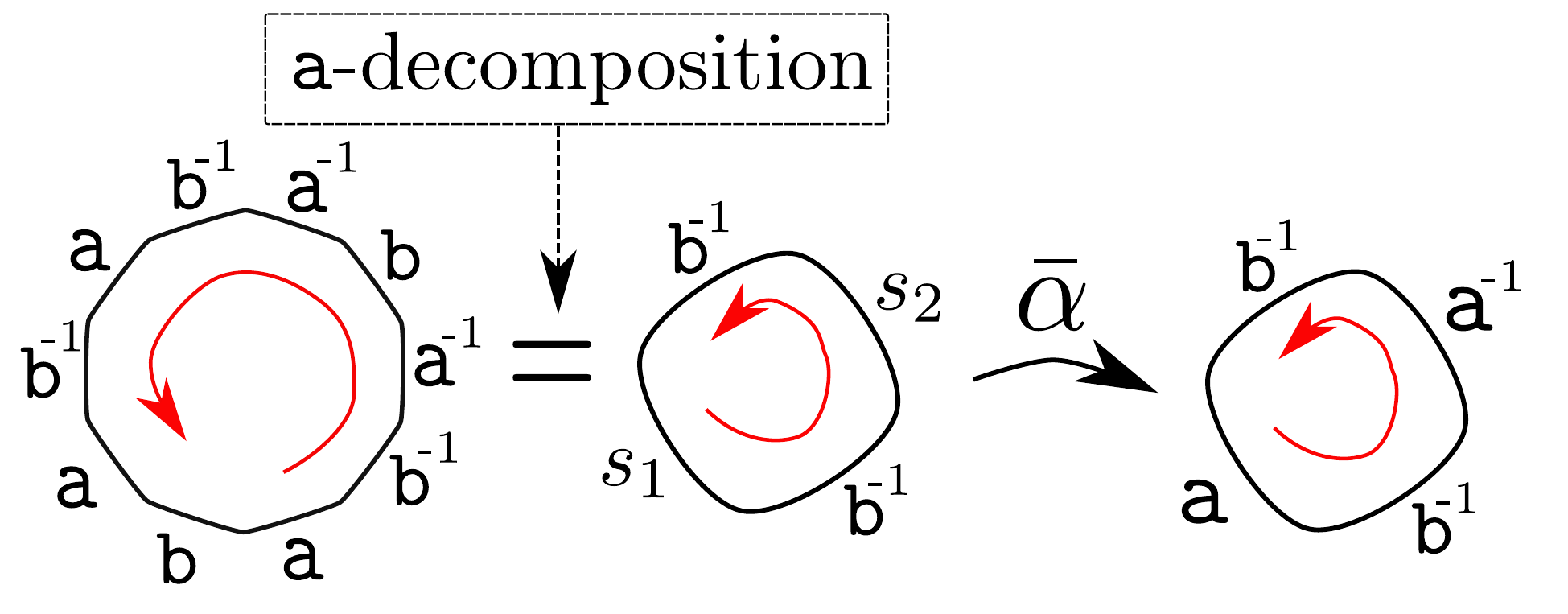} \label{fig:circle general}}
 \\
  \subfloat[]{\includegraphics[width=0.3\textwidth]{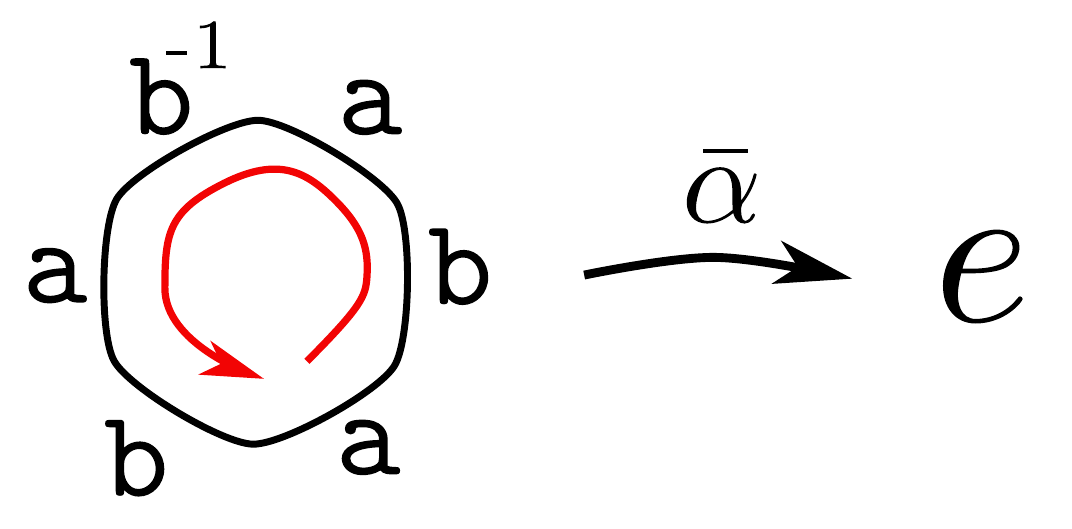} \label{fig:circle trivial}}

  \caption{Visulizing $\bar{\alpha}$: Conjugacy classes $[w]$ correspond to cyclic labels of a circle. One may define a $\at$-decomposition and $\bar{\alpha}$ on such labels except when $[w]$ does not contain $\at$ or $\at^{-1}$ as a subword. See Example \ref{exmp:alpha bar}}
  \label{fig:circles and conjugacy classes}
\end{figure}
Consider the following example:
\begin{exmp} \label{exmp:alpha bar}
Let $w = \at \bt^{-1} \at^{-1} \bt \at^{-1} \bt^{-1} \at \bt^{-1} \at \bt \in \Acl$. Its conjugacy class is depicted in Figure 
\ref{fig:circles and conjugacy classes}. We observe that $w$ starts with $\at$ and set $w' = \bt^{-1} \at^{-1} \bt \at^{-1} \bt^{-1} \at \bt^{-1} \at \bt$ so that $w = \at w'$. By Definition \ref{defn:maps alpha bar and beta bar}, $\bar{\alpha}([w]) = [\alpha(\at w' \at) \at^{-1}] = [\left( \at \bt^{-1} \at^{-1} \bt^{-1} \at \right) \at^{-1} ] = [\at \bt^{-1} \at^{-1} \bt^{-1}]$. However, we could have also done an $\at$-decomposition of the elements on a circle as pictured in Figure
\ref{fig:circles and conjugacy classes} ($\textrm{A}$) with $s_1 = \at \bt^{-1} \at \bt \at \in \Scl_\at^+$ and $s_2 = \at^{-1} \bt \at^{-1} \in \Scl_\at^-$ and obtained the same result.

Similarly, let $w = \at \bt \at \bt^{-1} \at \bt$. It's conjugacy class is represented by a cyclic labelling of a circle in Figure \ref{fig:circles and conjugacy classes} (\textrm{B}). The first letter of $w$ is $\at$. Set $w' = \bt \at \bt^{-1} \at \bt$ so that
$w = \at w'$.
The $\at$-decomposition of $\at w' \at = s_1 \in \Scl_{\at}^+$. Hence $\bar{\alpha}([w]) = [\alpha(\at w' \at) \at^{-1}] = [ \left( \at \right) \at^{-1}] = [e] \in \bar{\Acl}_0$.
\end{exmp}

\begin{prop} \label{prop:alpha on conjugacy classes decreases}
Let $\bar{\alpha}, \bar{\beta} \col \bar{\Acl}_0 \to \bar{\Acl}_0$ be defined as above and let $[w] \in \bar{\Acl}_0$. Then $|\bar{\alpha}([w])| \leq |[w]|$ with equality if and only if $\bar{\alpha}([w]) = [w]$. The analogous statement holds for $\bar{\beta}$.
If $[w]$ is a non-trivial class in the commutator subgroup of $\F_2$ then $\bar{\alpha}([w])$ and $\bar{\beta}([w])$ are non-trivial.
If $\bar{\alpha}([w]) = [w] = \bar{\beta}([w])$ then $[w]$ may be represented by $w = [\at, \bt]^n$ for $n \in \Z$.
\end{prop}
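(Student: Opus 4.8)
The plan is to reduce all three assertions to one combinatorial fact about an auxiliary word, and to obtain the statements for $\bar{\beta}$ from those for $\bar{\alpha}$ by the symmetry interchanging $\at$ and $\bt$ (Proposition~\ref{prop:cutting words}~(\ref{prop-case:interchange a b})). If $[w]=e$ everything is trivial with $n=0$, so assume $[w]\neq e$ and, following Definition~\ref{defn:maps alpha bar and beta bar}, fix the representative $w=\xt w'$ with $\xt\in\{\at,\at^{-1}\}$ its first letter, so that $w$ ends in a power of $\bt$ and $v:=\xt w'\xt = w\xt$ is a reduced alternating word that both begins and ends in a power of $\at$. Write its $\at$-decomposition as $v=s_1\yt_1 s_2\cdots\yt_{l-1}s_l$ (no boundary $\yt$'s). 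Then $\alpha(v)=\xt_1\yt_1\cdots\yt_{l-1}\xt_l$, where the signs of the $\xt_i$ strictly alternate (the $s_i$ alternate between $\Scl_\at^+$ and $\Scl_\at^-$) and $\xt_l=\xt$ equals the last letter of $v$; hence $\bar{\alpha}([w])$ is represented by the reduced alternating word $u:=\xt_1\yt_1\xt_2\cdots\xt_{l-1}\yt_{l-1}$, the terminal $\xt_l\xt^{-1}$ having cancelled. All subsequent steps read off facts about this decomposition.

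For the length bound I would just count letters. From $|v|=\sum_{i=1}^l|s_i|+(l-1)$, $|\alpha(v)|=2l-1$ and $|w|=|v|-1$ one gets $|[w]|-|\bar{\alpha}([w])|=|w|-|u|=\sum_{i=1}^l(|s_i|-1)\ge 0$, with equality exactly when every $s_i$ is a single power of $\at$; and in that case $v$ is already alternating, $\alpha(v)=v$, and $\bar{\alpha}([w])=[v\xt^{-1}]=[w]$. Conversely, if $\bar{\alpha}([w])=[w]$ then, since reduced representatives of a class in $\bar{\Acl}_0$ all have the same length (Remark~\ref{rmk:on conjugacy classes for acl}), the displayed difference is zero and every $s_i$ is a single letter. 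This gives the length inequality and its equality criterion; the case of $\bar{\beta}$ is identical after swapping $\at$ and $\bt$.

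For non-triviality, the key remark is that a nontrivial class lying in the commutator subgroup must contain both $\at$ and $\at^{-1}$ among its letters: $w$ is a nonempty reduced alternating word with zero total $\at$-exponent, so $\at$ occurs if and only if $\at^{-1}$ does, and if neither occurred then alternation would make $w$ a single power of $\bt$, whose total $\bt$-exponent is nonzero. Hence the $\at$-decomposition of $v$ contains a block in $\Scl_\at^+$ and a block in $\Scl_\at^-$, so $l\ge 2$ and $u$ is a nonempty reduced word, that is, $\bar{\alpha}([w])\neq e$; the symmetry between $\at$ and $\bt$ gives $\bar{\beta}([w])\neq e$.

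For the rigidity statement I would use the equality case of the length bound for both $\bar{\alpha}$ and $\bar{\beta}$. That $\bar{\alpha}$ fixes $[w]$ means every block $s_i$ of $v$ is a single power of $\at$; since consecutive blocks lie in opposite sets $\Scl_\at^{\pm}$ and $v=w\xt$ has $\xt$ as both its first and its last letter, this is equivalent to saying that the $\at$-powers occurring in $w$ strictly alternate in sign when read cyclically. Symmetrically, $\bar{\beta}([w])=[w]$ says the $\bt$-powers of $w$ strictly alternate in sign cyclically. Writing $w=\xt_1\yt_1\cdots\xt_m\yt_m$ cyclically with the $\xt_i$ powers of $\at$ and the $\yt_i$ powers of $\bt$, the two alternation conditions force $m$ to be even and every $\xt_i,\yt_i$ to be determined by the signs of $\xt_1$ and $\yt_1$, so $w=(\xt_1\yt_1\xt_1^{-1}\yt_1^{-1})^{m/2}$; a direct check then shows that each of the four words $[\at^{\pm1},\bt^{\pm1}]$ is a cyclic permutation of $[\at,\bt]$ or of $[\at,\bt]^{-1}$, hence conjugate to $[\at,\bt]^{\pm1}$, whence $[w]=[[\at,\bt]^n]$ with $n=\pm m/2\in\Z$. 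The fussiest point, which I expect to be the main obstacle, is exactly this equivalence between ``every block of $v$ is a single letter'' and ``the $\at$-powers of $w$ alternate cyclically'': the $\at$-decomposition merges consecutive same-sign $\at$-powers together with the intervening $\bt$'s, so one must verify that a block has length one precisely when its two cyclically adjacent $\at$-powers have opposite sign, and that the wrap-around at the basepoint $\xt$ behaves correctly — which is precisely why one argues with $v=w\xt$ rather than with $w$ directly. Everything else is routine bookkeeping with the $\at$-decomposition and Proposition~\ref{prop:cutting words}.
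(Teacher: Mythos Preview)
Your proof is correct and follows essentially the same approach as the paper, just with considerably more detail: the paper handles the length inequality by referring back to Proposition~\ref{prop:cutting words}, argues non-triviality by picking a representative $w=\at v_1\at^{-1}v_2$ and noting both $\at$ and $\at^{-1}$ survive, and for rigidity writes down the forms forced by $\bar{\alpha}([w])=[w]$ and by $\bar{\beta}([w])=[w]$ separately and then says ``comparing both yields the statement.'' Your explicit letter count via the auxiliary word $v=w\xt$ and your careful treatment of the cyclic wrap-around make the same reasoning precise, but the strategy is identical.
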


\begin{proof}
To see that $\bar{\alpha}$, $\bar{\beta}$ decrease length unless they fix classes is the same argument as in the proof of Proposition \ref{prop:cutting words}.
If $[w]$ is a non-trivial class in the commutator subgroup of $\F_2$ then there is a reduced representative $w$ such that $w = \at v_1 \at^{-1} v_2$ for some appropriate $v_1,v_2 \in \Acl$ and we see that $\bar{\alpha}([w])$ is non-trivial as it also contains the subletters $\at$ and $\at^{-1}$. 
If $w \in \Acl$ is a representative such that $\bar{\alpha}$ fixes $[w]$ then $w$ has to be of the form
$w = \prod_{i=1}^k \at \yt_i \at^{-1} \yt'_i$ for some $\yt_i,\yt'_i \in \{ \bt, \bt^{-1} \}$, $k \geq 1$ and similarly, if $\bar{\beta}$ fixes a class then the a representative has to be of the form $w = \prod_{i=1}^k \xt_i \bt \xt'_i \bt^{-1}$ for some $\xt_i, \xt'_i \in \{ \at, \at^{-1} \}$, $k \geq 1$. Comparing both yields the statement.
\end{proof}

\begin{prop} \label{prop:powers of alpha, beta}
Assume that $w \in \Acl$ is
non-empty, has even length and that $c_1, c_2 \in \Acl$ are words such that $c_1 w c_2 \in \Acl$ is again an alternating word.
 Then there are words $d_1, d_2, w' \in \Acl$ 
such that $\alpha(c_1 w^n c_2) = d_1 w'^{n-1} d_2 \in \Acl$ for all $n \geq 1$ as reduced words where $w'$ has even length and $[w'] = \bar{\alpha}([w]) \in \bar{\Acl}_0$. If $w$ lies in the commutator subgroup then $w'$ is non-empty. The analogous statement holds for $\beta$.
\end{prop}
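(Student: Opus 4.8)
The plan is to analyze the $\at$-decomposition of $c_1 w^n c_2$ and show that its ``interior'' stabilizes as $n$ grows. First I would fix a reduced representative $w = \zt_1 \cdots \zt_{2m}$ of even length starting in a power of $\at$, say $w = \at w'$ in the notation of Definition \ref{defn:maps alpha bar and beta bar} (the case where $w$ starts in a power of $\bt$ is symmetric, and the case where $w$ starts in $\at$ but ends in $\at$, contradicting even length, cannot occur). Writing $c_1 w^n c_2$ as a reduced alternating word, I would compute its $\at$-decomposition and observe that, because $c_1 w c_2$ is already reduced and alternating, the only places where the $s_i$-subwords of consecutive $w$-blocks can ``merge'' are at the seams between one copy of $w$ and the next; these seams are all identical (each is the seam between a terminal power of $\bt$ and the initial $\at$ of the next block, the same for every $n$). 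So the $\at$-decomposition of $c_1 w^n c_2$ consists of: a prefix part depending only on $c_1$ and the first seam, a block that repeats $n-1$ times coming from the $w$-to-$w$ seams, and a suffix part depending only on $c_2$ and the last seam.

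Next I would make this precise using part (\ref{prop:cases,splitting}) of Proposition \ref{prop:cutting words}: splitting at the letter $\xt$ that begins each copy of $w$, one gets $\alpha(c_1 w^n c_2) = \alpha(c_1 w \cdots) \xt^{-1} \alpha(\xt w \cdots w c_2)$ and iterating, $\alpha(c_1 w^n c_2)$ equals (as an element of $\F_2$, via a non-reduced product that then reduces) a telescoping product $\alpha(c_1 \xt)\,\xt^{-1}\,\alpha(\xt w \xt)\,\xt^{-1}\,\alpha(\xt w \xt)\,\xt^{-1} \cdots \alpha(\xt w c_2)$, with $\alpha(\xt w \xt)\,\xt^{-1}$ appearing $n-1$ times. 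Setting $w' = \alpha(\xt w \xt)\,\xt^{-1}$ (which by definition of $\bar\alpha$ represents $\bar\alpha([w]) \in \bar\Acl_0$, hence has even length since $\bar\alpha$ preserves $\bar\Acl_0$), $d_1$ = the reduction of $\alpha(c_1 \xt)$ together with whatever cancels at the first seam, and $d_2$ = the reduction of the tail $\xt^{-1}\alpha(\xt w c_2)$, one gets $\alpha(c_1 w^n c_2) = d_1 w'^{\,n-1} d_2$. I would then check that after all cancellations this product is reduced and alternating: the internal cancellations between $\alpha(\xt w \xt)$ and the adjacent $\xt^{-1}$ are exactly the ones governed by (\ref{prop:cases,splitting}), and no further backtracking is introduced because $w'$ is cyclically reduced (being a representative of a conjugacy class in $\bar\Acl_0$). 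One also needs $d_1, d_2, w'$ to all lie in $\Acl$, which follows because each is a subword of the alternating word $\alpha(c_1 w^n c_2)$.

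Finally, for the commutator-subgroup claim: if $w$ lies in $\F_2'$ then $[w] \in \bar\Acl_0$ is a nontrivial class in the commutator subgroup (nontrivial because $w \neq e$ and $w$, being in $\Acl$, is cyclically reduced), so by Proposition \ref{prop:alpha on conjugacy classes decreases} the class $\bar\alpha([w])$ is nontrivial, and hence $w'$ is non-empty. The statement for $\beta$ follows by applying the automorphism $\psi$ interchanging $\at$ and $\bt$, using part (\ref{prop-case:interchange a b}) of Proposition \ref{prop:cutting words} and the corresponding version of Definition \ref{defn:maps alpha bar and beta bar} for $\bar\beta$.

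The main obstacle I anticipate is bookkeeping the boundary behaviour carefully: pinning down exactly how much of $c_1$ (resp. $c_2$) survives into $d_1$ (resp. $d_2$), and verifying rigorously that the telescoped non-reduced product reduces to precisely $d_1 w'^{\,n-1} d_2$ with no unexpected cancellation spilling across the $w'$-blocks once $n$ is large. The clean way to handle this is to first treat the ``bulk'' case where $w$ itself contains both $\at$ and $\at^{-1}$ (so that $w'$ is genuinely shorter and the seams are unambiguous) via the geometric $\at$-decomposition on the circle labelled by $[w]$, and then separately dispatch the degenerate cases where $w$ contains only one of $\at, \at^{-1}$ (where $\bar\alpha([w]) = e$, so $w'$ is empty and one checks directly that $\alpha(c_1 w^n c_2)$ has bounded-length interior) — though note the degenerate cases do not arise when $w$ is in the commutator subgroup.
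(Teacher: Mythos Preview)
Your approach is essentially the paper's: both telescope $\alpha(c_1 w^n c_2)$ by repeatedly applying Proposition~\ref{prop:cutting words}(\ref{prop:cases,splitting}) at a fixed power of $\at$ recurring in each copy of $w$, producing a middle block repeated $n-1$ times that represents $\bar{\alpha}([w])$, flanked by boundary pieces depending only on $c_1,c_2$; the degenerate case and the commutator clause are handled exactly as you describe.

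Two points to clean up. First, your expression $\alpha(\xt w \xt)$ is ill-formed since $w$ already begins with $\xt$; you mean $\alpha(w\xt)$, equivalently $\alpha(\xt w'_{\mathrm{tail}} \xt)$ in the notation of Definition~\ref{defn:maps alpha bar and beta bar} where $w = \xt w'_{\mathrm{tail}}$ (and you should avoid reusing the symbol $w'$ for two different objects). Second, the assertion that the case where $w$ starts in a power of $\bt$ is ``symmetric'' is not correct as stated: the $\at\leftrightarrow\bt$ symmetry exchanges $\alpha$ with $\beta$, not $\alpha$ with itself, so it does not reduce that case to the one you treat. The paper sidesteps this by splitting not at the first letter of $w$ but at \emph{any} chosen occurrence of $\at$ inside $w$ (writing $w = v_1 \at v_2 \at^{-1} v_3$), which works regardless of whether $w$ begins with a power of $\at$ or of $\bt$; you can fix your argument the same way.
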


\begin{proof}
If $w \in \Acl$ does not contain both a positive and a negative power of $\at$, the statement follows by an easy calculation. Note that this is the case if and only if $\bar{\alpha}([w]) = [e]$.
Otherwise $w$ contains at least one sub-letter $\at$ and one sub-letter $\at^{-1}$.
This is the case if $w$ lies in the commutator subgroup. Suppose without loss of generality that $w = v_1 \at v_2 \at^{-1} v_3$ as a reduced word for some $v_1, v_2, v_3 \in \Acl$. By multiple applications of Proposition \ref{prop:cutting words}, we see that
\begin{align*}
\alpha(c_1 w^n c_2) &= \alpha(c_2 \left( v_1 \at v_2 \at^{-1} v_3 \right)^n c_2)  \\
&= \alpha(c_1 v_1 \at) \at^{-1} \alpha(\at v_2 \at^{-1} v_3 (v_1 \at v_2 \at^{-1} v_3)^{n-1} c_2) \\
&= \alpha(c_1 v_1 \at) \at^{-1} \alpha(\at v_2 \at^{-1} v_3 v_1 \at) \at^{-1} \alpha( \at v_2 \at^{-1} v_3 (v_1 \at v_2 \at^{-1} v_3)^{n-2} c_2) \\
&= \alpha(c_1 v_1 \at) \at^{-1} \left( \alpha(\at v_2 \at^{-1} v_3 v_1 \at) \at^{-1} \right)^2 \alpha( \at v_2 \at^{-1} v_3 (v_1 \at v_2 \at^{-1} v_3)^{n-3} c_2) \\
 &= \cdots \\
&= \alpha(c_1 v_1 \at) \at^{-1} (\alpha(\at v_2 \at^{-1} v_3 v_1 \at) \at^{-1} )^{n-1} \alpha(\at v_2 \at^{-1} v_3 c_2)
\end{align*} 
as non-reduced elements in the free group.
Then we define $d_1$, $d_2$ and $w'$ to be the reduced representative of 
\[
\alpha(c_1 v_1 \at) \at^{-1}, \text{   } \alpha(\at v_2 \at^{-1} v_3 c_2) \text{ and } \alpha(\at v_2 \at^{-1} v_3 v_1 \at) \at^{-1} 
\]
respectively. 
Moreover, $\alpha(\at v_2 \at^{-1} v_3 v_1 \at)$ is a reduced alternating word which starts and ends in $\at$ and contains the $\at^{-1}$ as a sub-letter. If follows that $w'$, the reduced representative of 
$\alpha(\at v_2 \at^{-1} v_3 v_1 \at) \at^{-1}$, starts with $\at$, contains $\at^{-1}$ and ends with a power of $\bt$, so $w'$ is non-empty. Further observe that $\bar{\alpha}([\at v_2 \at^{-1} v_3 v_1])$ is represented by $\alpha(\at v_2 \at^{-1} v_3 v_1 \at) \at^{-1}$ and hence $[w'] = \bar{\alpha}(w)$.
\end{proof}

\subsection{Letter-Thin Triples, $\alpha$ and $\beta$} \label{subsec:letter-thin and alpha and beta}
In order to streamline proofs later and ease notation we define an equivalence relation on triples $(x_1,x_2,x_3)$.
We think of such a triple as the sides of a (thin) triangle.
We stress that the $x_i$ are not actually the side of triangles in some metric space;
see Figure \ref{fig:triangles}.
Here, we study a special type of triples, namely \emph{letter-thin triples} in Definition \ref{defn:letter-thin}.

\begin{defn} \label{defn:equivalent triples}
Let $(x_1,x_2,x_3)$ be a triple of elements in $\F_2$ and let $\phi \col \F_2 \to \F_2$ be a set-theoretic function. We will understand by $\phi(x_1,x_2,x_3)$ the triple  $(\phi(x_1), \phi(x_2), \phi(x_3))$.
We define $\sim$ to be the equivalence relation on triples generated by
\begin{enumerate}
\item[(i)]  $(x_1,x_2,x_3) \sim (x_2, x_3, x_1)$
\item[(ii)] $(x_1,x_2,x_3) \sim (x_3^{-1}, x_2^{-1}, x_1^{-1})$
\item[(iii)] $(x_1,x_2,x_3) \sim \phi_\at(x_1, x_2, x_3)$, where $\phi_\at \col \F_2 \to \F_2$ is the automorphism defined via $\at \mapsto \at^{-1}$ and $\bt \mapsto \bt$.
\item[(iv)] $(x_1,x_2,x_3) \sim \phi_\bt(x_1, x_2, x_3)$, where $\phi_\bt \col \F_2 \to \F_2$ is the automorphism defined via $\at \mapsto \at$ and $\bt \mapsto \bt^{-1}$.
\end{enumerate}
for all $x_1,x_2,x_3 \in \F_2$ and say that $(x_1,x_2,x_3)$ is \emph{equivalent} to $(y_1, y_2, y_3)$ if $(x_1,x_2,x_3) \sim (y_1, y_2, y_3)$ under this relation.
\end{defn}
Imagining $(x_1,x_2,x_3)$ as labelling the sides of a triangle, two triples are equivalent if they may be obtained from each other by a sequence of rotations $(i)$, flips $(ii)$ or by changing the signs of its labels $(iii)$ \& $(iv)$.

\begin{prop} \label{prop:alpha respects equivalence}
Let $x_1,x_2,x_3,y_1,y_2,y_3 \in \F_2$ such that $(x_1,x_2,x_3) \sim (y_1, y_2, y_3)$.
Then if $x_1,x_2,x_3 \in \Acl$ also $y_1,y_2,y_3 \in \Acl$.
Moreover, in this case
$\alpha(x_1,x_2,x_3) \sim \alpha(y_1,y_2,y_3)$ and $\beta(x_1,x_2,x_3) \sim \beta(y_1, y_2, y_3)$.
\end{prop}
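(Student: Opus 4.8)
The plan is to verify the claim on each of the four generators $(i)$–$(iv)$ of the equivalence relation $\sim$, since $\sim$ is by definition the equivalence relation generated by these moves and both statements (that $\Acl$ is preserved, and that $\alpha$, $\beta$ commute with the move up to $\sim$) are evidently transitive in the appropriate sense. That $\Acl$ is preserved under each move is immediate: cyclic rotation $(i)$ of a triple does not change the three words at all; the flip $(ii)$ replaces each $x_i$ by an inverse of some $x_j$, and $\Acl$ is closed under taking inverses; and the automorphisms $\phi_\at$, $\phi_\bt$ of $(iii)$, $(iv)$ only flip the signs of individual letters, hence send alternating words to alternating words. So it remains to treat the $\alpha$, $\beta$ equivariance for each move.

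For move $(i)$, there is nothing to prove: $\alpha(x_2,x_3,x_1) = (\alpha(x_2),\alpha(x_3),\alpha(x_1)) \sim (\alpha(x_1),\alpha(x_2),\alpha(x_3)) = \alpha(x_1,x_2,x_3)$ by applying rule $(i)$ again, and likewise for $\beta$. For move $(ii)$, I use Proposition \ref{prop:cutting words}(\ref{prop-case:alpha alternating}), which says $\alpha(w^{-1}) = \alpha(w)^{-1}$ for all $w \in \Acl$; hence $\alpha(x_3^{-1},x_2^{-1},x_1^{-1}) = (\alpha(x_3)^{-1}, \alpha(x_2)^{-1}, \alpha(x_1)^{-1})$, which is precisely the image of $\alpha(x_1,x_2,x_3)$ under rule $(ii)$, so the two are $\sim$-equivalent; the same argument works for $\beta$ using the corresponding half of \ref{prop-case:alpha alternating}. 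For moves $(iii)$ and $(iv)$ I need to understand how $\alpha$ and $\beta$ interact with the sign-flipping automorphisms $\phi_\at$ and $\phi_\bt$. The key observation is that the $\at$-decomposition of $w$ (Proposition \ref{prop: a decomposition}) behaves predictably under these automorphisms: applying $\phi_\bt$ to $w = \yt_0 s_1 \yt_1 \cdots s_l \yt_l$ replaces each separating letter $\yt_i \in \{\bt,\bt^{-1}\}$ by its inverse and replaces each $s_i$ by $\phi_\bt(s_i)$, and crucially $\phi_\bt$ preserves $\Scl_\at^+$ and $\Scl_\at^-$ (it does not touch the $\at$'s), so $s_i \in \Scl_\at^+$ iff $\phi_\bt(s_i) \in \Scl_\at^+$; therefore $\alpha \circ \phi_\bt = \phi_\bt \circ \alpha$ on $\Acl$. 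For $\phi_\at$: applying it to the $\at$-decomposition swaps $\Scl_\at^+ \leftrightarrow \Scl_\at^-$ (it flips every $\at$), and leaves the separating letters $\yt_i$ alone, so in the image of $\alpha$ each chosen letter $\xt_i \in \{\at,\at^{-1}\}$ gets flipped, i.e. $\alpha \circ \phi_\at = \phi_\at \circ \alpha$ as well. Thus $\alpha(\phi_\at(x_1,x_2,x_3)) = \phi_\at(\alpha(x_1,x_2,x_3))$ and similarly for $\phi_\bt$, so these images are $\sim$-equivalent to $\alpha(x_1,x_2,x_3)$ via rules $(iii)$, $(iv)$. The statements for $\beta$ follow from these by conjugating with $\psi$ (the $\at \leftrightarrow \bt$ swap) and invoking Proposition \ref{prop:cutting words}(\ref{prop-case:interchange a b}).

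The main obstacle, such as it is, is purely bookkeeping: one must check that $\alpha$ genuinely commutes with $\phi_\at$ and $\phi_\bt$ rather than merely commuting up to $\sim$, and this requires being careful that the \emph{uniqueness} clause of Proposition \ref{prop: a decomposition} guarantees $\phi$ sends the $\at$-decomposition of $w$ to the $\at$-decomposition of $\phi(w)$ (one cannot get a different, non-minimal decomposition). Since $\phi_\at$ and $\phi_\bt$ are bijections preserving word length and the alternating property, they send minimal-length decompositions to minimal-length decompositions, which pins down the result. Once the four cases are in hand, closure under the generated equivalence relation is automatic, completing the proof.
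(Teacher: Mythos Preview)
Your proof is correct and follows essentially the same approach as the paper: check the four generating moves, use $\alpha(w^{-1})=\alpha(w)^{-1}$ for $(ii)$, and verify directly from the $\at$-decomposition that $\alpha$ commutes with $\phi_\at$ and $\phi_\bt$ for $(iii)$, $(iv)$. The only cosmetic difference is that the paper dispatches $\beta$ with a bare ``analogously'' (meaning: redo the argument with the $\bt$-decomposition), whereas you route it through the $\psi$-conjugation of Proposition~\ref{prop:cutting words}(\ref{prop-case:interchange a b}); both are fine, though your route silently uses the identities $\psi\circ\phi_\at=\phi_\bt\circ\psi$ and $\psi\circ\phi_\bt=\phi_\at\circ\psi$, which are true but worth stating.
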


\begin{proof}
The first part is clear from the definitions.
Note that $\alpha$ commutes both with ``rotating the side''  $(i)$ and taking inverses $(ii)$ as $\alpha$ satisfies that $\alpha(w^{-1}) = \alpha(w)^{-1}$ for $w \in \Acl$.

Let $w = \yt_0 s_1 \yt_1 \cdots \yt_{k-1} s_k \yt_k$ be the $\at$-decomposition of $w$ (see Definition \ref{defn:alpha and beta}), where $\yt_i \in \{ \bt, \bt^{-1} \}$ and $s_i \in \Scl^+_\at \cup \Scl^-_\at$ alternates between 
$\Scl^+_\at$ and $\Scl^-_\at$.
Then 
\[
\phi_\at(w) = \yt_0 \phi_\at(s_1) \yt_1 \cdots \yt_{k-1} \phi_\at(s_k) \yt_k
\] where $\phi(s_i) \in \Scl^+_\at$ if and only if $s_i \in \Scl^-_\at$ and $\phi(s_i) \in \Scl^-_\at$ if and only if $s_i \in \Scl^+_\at$. So $\alpha(\phi_\at(w)) = \phi_\at(\alpha(w))$ and hence $\alpha \circ \phi_\at(x_1, x_2, x_3)$ is equivalent to $\alpha(x_1,x_2,x_3)$. Similarly, $\phi_\bt(w) = \phi_\bt(\yt_0)  \phi_\bt(s_1) \phi_\bt(\yt_1) \cdots \phi_\bt(\yt_{k-1}) \phi_\bt(s_k) \phi_\bt(\yt_k)$ where 
both $\phi_\bt(s_i)$ and $s_i$ lie in the same set $\Scl_\at^+$ or $\Scl_\at^-$. We see that once more, 
$\alpha(\phi_\bt(w)) = \phi_\bt(\alpha(w))$ and hence also $\alpha \circ \phi_\bt(x_1,x_2,x_3)$ is equivalent to $\alpha(x_1,x_2,x_3)$. Analogously, we see the statement for $\beta$.
\end{proof}

For a visualisation of the following definition we refer the reader to Figure \ref{fig:triangles}.
\begin{defn} \label{defn:letter-thin}
Let $x_1, x_2, x_3 \in \Acl$ be \emph{alternating} elements. The triple
$(x_1, x_2, x_3)$ is called \emph{letter-thin triple} in one of the following cases:
\begin{itemize}
\item[ $\To$ ] There are (possibly trivial) elements $c_1, c_2, c_3 \in \Acl$ such that
\begin{itemize}
\item[$\Toa$] $
(x_1,x_2,x_3) \sim (c_1^{-1} \at \bt c_2, c_2^{-1} \bt^{-1} \at c_3, c_3^{-1} \at^{-1} c_1)$ or
\item[$\Tob$]
$
(x_1,x_2,x_3) \sim (c_1^{-1} \bt \at c_2, c_2^{-1} \at^{-1} \bt c_3, c_3^{-1} \bt^{-1} c_1)$ 
\end{itemize}
where all words are required to be reduced.
\item[ $\Tt$ ] 
There are (possibly trivial) elements $c_1, c_2 \in \Acl$ such that
\begin{itemize}
\item[$\Tta$] 
$(x_1,x_2,x_3) \sim (c_1^{-1} \bt^{-1} \at \bt c_2, c_2^{-1} \bt^{-1}, \bt c_1)$ or
\item[$\Ttb$]
$(x_1,x_2,x_3) \sim (c_1^{-1} \at^{-1} \bt \at c_2, c_2^{-1} \at^{-1}, \at c_1)$
\end{itemize}
where all words are required to be reduced.
\end{itemize}
In all cases, $\sim$ denotes the equivalence of triples of Definition \ref{defn:equivalent triples}. We say that a letter-thin triple $(x_1,x_2,x_3)$ is \emph{of type $\Toa$, $\Tob$, $\Tta$} or \emph{$\Ttb$}
if it is equivalent to the corresponding triple above. 
\end{defn}
Note for example in the representatives of $\Toa$ above, necessarily $c_1$, $c_3$ are either empty or their first letter is a power of $\bt$. Similarly, $c_2$ is either empty or its first letter is a power of $\at$, else the $x_i$ would not be alternating.

Note that for any letter-thin triple $(x_1,x_2,x_3)$ of type $\Toa$ we may always find elements $d_1, d_2, d_3 \in \Acl$ with first letter a power of $\bt$ such that
\begin{align} \label{equ:maybe letter thin}
(x_1,x_2,x_3) = (d_1^{-1} \xt_1 d_2, d_2^{-1} \xt_2 d_3, d_3^{-1} \xt_3 d_1)
\end{align}
where $\xt_i \in \{ \at, \at^{-1} \}$ are such that \emph{not all of $\xt_1$, $\xt_2$ and $\xt_3$ are equal} i.e. have the same parity.
As we consider the triples only up to equivalence one may wonder if we can assume that any triple as in Equation (\ref{equ:maybe letter thin}) such that not all of $d_i$ are empty is letter-thin of type $\Toa$.
However, this is not the case: As $\xt_1$, $\xt_2$, $\xt_3$ do not all have the same parity, there is exactly one $i$ such that $\xt_i = \xt_{i+1}$ where indices are considered$\mod 3$. Then one may see that $(x_1,x_2,x_3)$ is of type $\Toa$ \emph{if and only if} $d_{i+1}$ is non-trivial. For example, $(d_1^{-1} \at, \at d_3, d_3^{-1} \at^{-1} d_1)$ is \emph{not} letter-thin for any $d_1, d_3 \in \Acl$ empty or starting with a power of $\bt$.

\begin{exmp}
$(\at, \at, \at^{-1})$ is not letter-thin and by the previous discussion also the triple $(\bt^{-1} \at^{-1} , \at^{-1} \bt, \bt^{-1} \at \bt)$ is not letter-thin. However, $(\bt^{-1} \at^{-1} \bt, \bt^{-1} \at^{-1}, \at \bt)$ \emph{is} letter-thin.
To see this, note that
\begin{equ*}{rcl}
(\bt^{-1} \at^{-1} \bt, \bt^{-1} \at^{-1}, \at \bt) &\overset{(iii)}{\sim}& (\bt^{-1} \at \bt, \bt^{-1} \at, \at^{-1} \bt) \\
& = & (c_1^{-1} \at \bt c_2, c_2^{-1} \bt^{-1} \at c_3, c_3^{-1} \at^{-1} c_1)
\end{equ*}
for $c_1 = \bt$, $c_2 = e$ and $c_3 = e$ and where $\overset{(iii)}{\sim}$ denotes the equivalence $(iii)$ of the definition of '$\sim$'; see Definition \ref{defn:equivalent triples}.
\end{exmp}

Note that by definition, if $(x_1,x_2,x_3)$ is letter-thin then \emph{all $x_1, x_2, x_3$ are alternating words}. 

See Figure \ref{fig:triangles} for the explanation of the name \emph{letter-thin triple}: First consider elements $g,h \in \F_2 = \langle \at, \bt \rangle$. The triple $(g,h,(gh)^{-1})$ corresponds to sides of a geodesic triangle in the Cayley graph $\textrm{Cay}(\F_2, \{ \at, \bt \})$ with endpoints $e, g, gh$. Note further that there are words $c_1, c_2, c_3 \in \F_2$ such that $g = c_1^{-1} c_2$, $h = c_2^{-1} c_3$, $(gh)^{-1} = c_3^{-1} c_1$ and all these expressions are freely reduced. A \emph{letter-thin} triple $(x_1,x_2,x_3)$ is such that each $x_i$ is in addition alternating and corresponds \emph{almost} to the sides of a geodesic triangle in a Cayley graph, apart from one letter $r \in \{ \at, \bt \}$ in the ``middle'' of the triangle. Figure \ref{fig:triangles} (\textrm{B}) corresponds to case $\To$ of Definition \ref{defn:letter-thin}, Figure \ref{fig:triangles} (\textrm{C}) corresponds to case $\Tt$ of Definition \ref{defn:letter-thin}. These letter-thin triples $(x_1,x_2,x_3)$ do \emph{not} label sides of triangles in a Cayley graph or any other metric space.
 \begin{figure}
  \centering

  \subfloat[]{\includegraphics[width=0.3\textwidth]{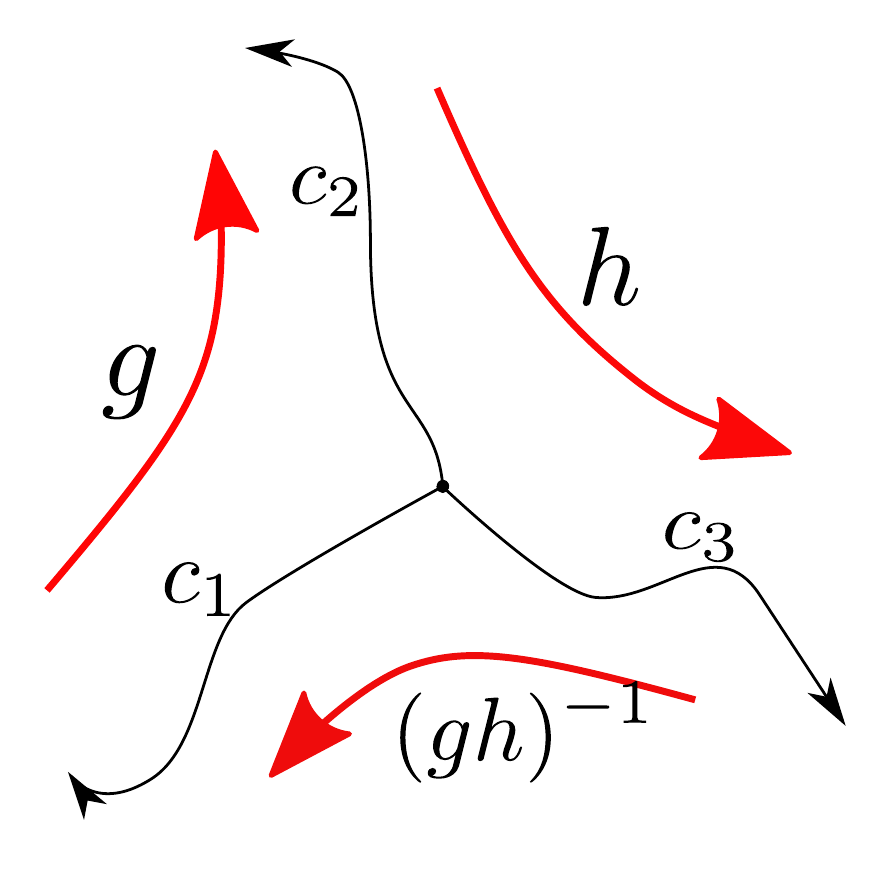} \label{fig:thin triangle}} 
  \hfill
  \subfloat[]{\includegraphics[width=0.3\textwidth]{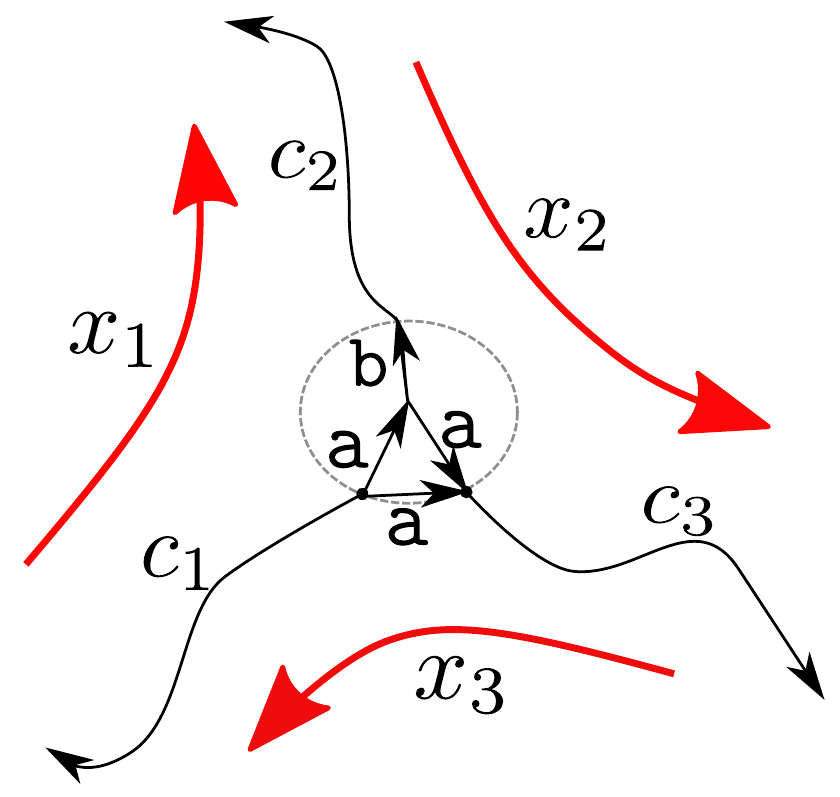} \label{fig:letter-thin triple}}
  \hfill
  \subfloat[]{\includegraphics[width=0.3\textwidth]{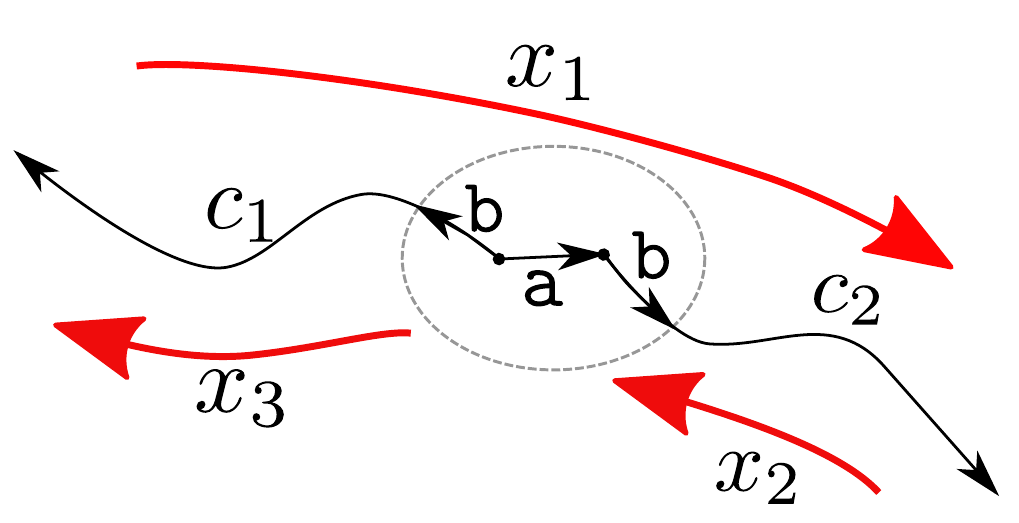} \label{fig:letter-thin degenerate}}
  
  \caption{Different ``triangles'': (\textrm{A}) arises as a generic thin triangle in the Cayley graph $\mathrm{Cay}(\F_2, \{ \at, \bt \})$ of the free group with standard generating set. Figures (\textrm{B}) and (\textrm{C}) correspond to letter-thin triples $\Toa$, $\Tta$. The grey dotted circles indicate the part of the letter-thin triples which can not be empty.
  These letter-thin triples do \emph{not} live in a Cayley graph or any well-known metric space.}. \label{fig:triangles}
\end{figure}

Observe that $(x_1,x_2,x_3)$ is letter-thin if and only if $\psi(x_1,x_2,x_3)$ is letter-thin for $\psi$ defined as in   Proposition \ref{prop:cutting words} (\ref{prop-case:interchange a b}) i.e. $\psi$ is the automorphism $\psi \col \F_2 \to \F_2$ defined via $\psi \col \at \mapsto \bt$ and $\psi \col \bt \mapsto \at$.

The maps $\alpha$ and $\beta$ respect letter-thin triples:
\begin{lemma} \label{lemma:alpha keeps thin.}
If $(x_1,x_2,x_3)$ is letter-thin. Then both $\alpha(x_1, x_2, x_3)$ and $\beta(x_1,x_2,x_3)$ are letter-thin. 
\end{lemma}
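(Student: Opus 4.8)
The plan is to use the symmetries that are already in place to cut the statement down to a handful of explicit configurations, and then to compute $\alpha$ on each configuration by hand using the splitting identity, Proposition~\ref{prop:cutting words}(\ref{prop:cases,splitting}).

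\textbf{Reduction.} By Proposition~\ref{prop:alpha respects equivalence}, both $\alpha$ and $\beta$ are compatible with the equivalence $\sim$, so it is enough to verify the claim on one representative of each of the four types $\Toa$, $\Tob$, $\Tta$, $\Ttb$. The automorphism $\psi\col\F_2\to\F_2$ swapping $\at$ and $\bt$ preserves the property of being letter-thin (the observation just before Lemma~\ref{lemma:alpha keeps thin.}), sends a representative of type $\Toa$ to one of type $\Tob$ and of type $\Tta$ to one of type $\Ttb$, and satisfies $\psi\circ\alpha=\beta\circ\psi$ and $\psi\circ\beta=\alpha\circ\psi$ by Proposition~\ref{prop:cutting words}(\ref{prop-case:interchange a b}). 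Hence once we know that $\alpha$ carries a representative of each of the four types to a letter-thin triple, the statement for $\beta$ follows formally from $\beta=\psi\circ\alpha\circ\psi$; and the four $\alpha$-computations are pairwise mirror images under interchanging $\at$ and $\bt$, so one really only has to treat a type-$\Toa$ representative and a type-$\Tta$ representative in detail.

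\textbf{The type-$\Toa$ computation.} Take $(x_1,x_2,x_3)=(c_1^{-1}\at\bt c_2,\ c_2^{-1}\bt^{-1}\at c_3,\ c_3^{-1}\at^{-1}c_1)$ with all words reduced, $c_1,c_3$ empty or beginning with a power of $\bt$, and $c_2$ empty or beginning with a power of $\at$. Each component contains exactly one distinguished letter, a power of $\at$, flanked by powers of $\bt$ or by word-ends, so Proposition~\ref{prop:cutting words}(\ref{prop:cases,splitting}) applies at each and gives, as elements of $\F_2$,
\begin{align*}
\alpha(x_1)&=\alpha(c_1^{-1}\at)\,\at^{-1}\,\alpha(\at\bt c_2),\\
\alpha(x_2)&=\alpha(c_2^{-1}\bt^{-1}\at)\,\at^{-1}\,\alpha(\at c_3),\\
\alpha(x_3)&=\alpha(c_3^{-1}\at^{-1})\,\at\,\alpha(\at^{-1}c_1).
\end{align*}
One then runs through the sub-cases according to which of $c_1,c_2,c_3$ is empty and, when non-empty, whether it begins with $\at$, $\at^{-1}$ or a power of $\bt$. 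In each case the $\at$-decomposition shows that the run carrying a distinguished letter is either that letter alone or that letter together with one adjacent $\at$-run, so that each distinguished $\at^{\pm1}$ survives in $\alpha(x_i)$; writing $c_1',c_2',c_3'\in\Acl$ for the reduced forms of (suitable pieces of) $\alpha(\at^{-1}c_1)$, $\alpha(\at\bt c_2)$, $\alpha(\at c_3)$, one checks that these are again alternating with the prescribed first letters, and, after the cancellations forced by the factors $\at^{-1}$ and $\at$ above, that $(\alpha(x_1),\alpha(x_2),\alpha(x_3))$ is equivalent to a triple of type $\Toa$ — or, in the degenerate sub-cases where two arms collapse, of type $\Tta$. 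The type-$\Tta$ representative $(c_1^{-1}\bt^{-1}\at\bt c_2,\ c_2^{-1}\bt^{-1},\ \bt c_1)$ is treated the same way: split $x_1$ at its $\at$ via Proposition~\ref{prop:cutting words}(\ref{prop:cases,splitting}), apply $\alpha$ directly to the two components $c_2^{-1}\bt^{-1}$ and $\bt c_1$, which have no interior power of $\at$, and again track the run containing the distinguished $\at$.

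\textbf{Main obstacle.} The genuine content, and the bulk of the argument, is the bookkeeping across these finitely many sub-cases: the expressions produced by Proposition~\ref{prop:cutting words}(\ref{prop:cases,splitting}) are a priori non-reduced, so cancellation and reducedness must be checked; the alternation and first-letter conditions on the new arms $c_i'$ must be confirmed; and, most delicately, one must verify the non-emptiness side conditions of Definition~\ref{defn:letter-thin} (recall that a triple of shape $(d_1^{-1}\xt_1d_2,d_2^{-1}\xt_2d_3,d_3^{-1}\xt_3d_1)$ with a repeated $\xt_i$ is letter-thin only when the arm opposite the repeated pair is non-empty), so that one does not land on a non-letter-thin triple such as $(d_1^{-1}\at,\at d_3,d_3^{-1}\at^{-1}d_1)$. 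Once all configurations are dispatched, the lemma follows, and, as explained, the statement for $\beta$ and for types $\Tob$, $\Ttb$ is obtained by interchanging $\at$ and $\bt$.
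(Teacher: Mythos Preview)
Your overall strategy matches the paper's: reduce via Proposition~\ref{prop:alpha respects equivalence} to one representative per type, then compute $\alpha$ on each using the splitting identity of Proposition~\ref{prop:cutting words}(\ref{prop:cases,splitting}), and obtain $\beta$ from $\alpha$ via $\beta=\psi\circ\alpha\circ\psi$. The paper proceeds exactly this way.

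However, your reduction to just two types is not valid. You write that ``the four $\alpha$-computations are pairwise mirror images under interchanging $\at$ and $\bt$, so one really only has to treat a type-$\Toa$ representative and a type-$\Tta$ representative in detail.'' But swapping $\at$ and $\bt$ turns $\alpha$ into $\beta$, not into $\alpha$: from $\psi\circ\alpha=\beta\circ\psi$ one gets $\alpha=\psi\circ\beta\circ\psi$, so knowing $\alpha$ on $\Toa$ gives you $\beta$ on $\Tob$ (apply $\psi$ to a $\Tob$ triple, land in $\Toa$, but then you must apply $\beta$, not $\alpha$). It does \emph{not} give you $\alpha$ on $\Tob$. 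The map $\alpha$ singles out the letter $\at$, and the $\at$-decomposition of a $\Tob$ representative $(c_1^{-1}\bt\at c_2,\,c_2^{-1}\at^{-1}\bt c_3,\,c_3^{-1}\bt^{-1}c_1)$ --- where now $c_1,c_3$ begin with powers of $\at$ --- is genuinely different from that of a $\Toa$ representative. The paper accordingly checks $\alpha$ on \emph{all four} types $\Toa$, $\Tob$, $\Tta$, $\Ttb$ separately, and only then invokes the $\psi$-symmetry for $\beta$.

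A smaller but related symptom: you assert that $\alpha$ of a $\Toa$ triple is ``equivalent to a triple of type $\Toa$ --- or, in the degenerate sub-cases where two arms collapse, of type $\Tta$.'' In fact the paper's case analysis shows that $\alpha$ of a $\Toa$ representative lands in types $\Toa$, $\Tob$, or $\Ttb$ (never $\Tta$), depending on the first letter of $c_2$ and on whether certain reduced pieces $c_{1,-}$, $c_{3,+}$ are trivial. This confirms that the case analysis you defer to ``bookkeeping'' is where the actual content lives and cannot be short-circuited by the symmetry you invoke.
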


\begin{proof}
We will proceed as follows:
Let $(x_1,x_2,x_3)$ be a letter-thin triple.
By Proposition \ref{prop:alpha respects equivalence} it is enough to check that $\alpha(x_1,x_2,x_3)$ is letter-thin for one representative of the equivalence class. Hence it suffices to check that $\alpha(x_1, x_2, x_3)$ is letter thin for
\begin{enumerate}
\item Type $\Toa$: $(x_1,x_2,x_3) = (c_1^{-1} \at \bt c_2, c_2^{-1} \bt^{-1} \at c_3, c_3^{-1} \at^{-1} c_1)$
\item Type $\Tob$: $(x_1,x_2,x_3) = (c_1^{-1} \bt \at c_2, c_2^{-1} \at^{-1} \bt c_3, c_3^{-1} \bt^{-1} c_1)$
\item Type $\Tta$: $(x_1,x_2,x_3) = (c_1^{-1} \bt^{-1} \at \bt c_2, c_2^{-1} \bt^{-1}, \bt c_1)$
\item Type $\Ttb$: $(x_1,x_2,x_3) = (c_1^{-1} \at^{-1} \bt \at c_2, c_2^{-1} \at^{-1}, \at c_1)$
\end{enumerate}
By symmetry, this will show the analogous statement for $\beta$.

Proposition \ref{prop:cutting words},  (\ref{prop:cases,splitting}) allows us to compute $\alpha$ piecewise i.e. after each occurrence of a letter $\at$ or $\at^{-1}$ in a reduced word. For any reduced word $c \in \Acl$ starting with a power of $\bt$ or being empty, we will write $c_+$ for the reduced word represented by $\at^{-1} \alpha(\at c)$, which itself is not reduced since $\alpha(\at c)$ starts with an $\at$. Similarly, we will write $c_-$ for the reduced word represented by $\at \alpha(\at^{-1} c)$.
Note that $c_+$ and $c_-$ are either empty or their first letter is a power of $\bt$, as $\alpha(\at^{\pm} c)$ is alternating.
 If $c$ is a word which already has a subscript, say $c_i$, then we will write $c_{i,+}$ and $c_{i,-}$, respectively.
We consider each of the above cases independently. 
For letter-thin triples $(x_1,x_2,x_3)$ of type $\Toa$ we compute $\alpha(x_1,x_2,x_3)$
and we will state exactly which equivalences $(i)$, $(ii)$, $(iii)$ and $(iv)$ of Definition \ref{defn:equivalent triples} are needed to obtain one of the representatives for $\Toa$, $\Tob$, $\Tta$ and $\Ttb$ of letter-thin triples as in Definition \ref{defn:letter-thin}. For letter-thin triples $(x_1,x_2,x_3)$ of type $\Tob$, $\Tta$ and $\Ttb$ we will just state the type of $\alpha(x_1,x_2,x_3)$ without explicitly giving the equivalence.

\begin{enumerate}
\item Type $\Toa$: Suppose $(x_1,x_2,x_3) = (c_1^{-1} \at \bt c_2, c_2^{-1} \bt^{-1} \at c_3, c_3^{-1} \at^{-1} c_1)$.
As $(x_1,x_2,x_3)$ are alternating $c_2$ is either empty or starts with a positive or a negative power of $\at$.
We consider these cases separately:
\begin{itemize}
\item $c_2$ is empty. In this case 
we compute using Proposition \ref{prop:cutting words},
\begin{align*}
\alpha(c_1^{-1} \at \bt) &= \alpha(c_1^{-1} \at) \at^{-1} \alpha( \at \bt) = \alpha( \at^{-1} c_1)^{-1} \bt = (\at^{-1} c_{1,-})^{-1} \bt = (c_{1,-})^{-1} \at \bt \\
\alpha(\bt^{-1} \at c_3) &= \alpha(\bt^{-1} \at) \at^{-1} \alpha(\at c_3) = \bt^{-1} \at c_{3,+}   \\
\alpha(c_3^{-1} \at^{-1} c_1) &= \alpha(c_3^{-1} \at^{-1}) \at \alpha(\at^{-1} c_1) =
\alpha(\at c_3)^{-1} c_{1,-} = (\at c_{3,+})^{-1} c_{1,-} = (c_{3,+})^{-1} \at^{-1} c_{1,-}
\end{align*}
and hence
\[
\alpha(x_1, x_2, x_3)=((c_{1,-})^{-1} \att \bt, \bt^{-1} \att c_{3,+}, (c_{3,+})^{-1} \att^{-1} c_{1,-})
\] which is of type $\Toa$. Indeed, for 
$c_1' = c_{1,-}$, $c_2'=e$ and $c_3' = c_{3,+}$ we see that 
\[
\alpha(x_1, x_2, x_3) = ({c'_1}^{-1} \at \bt c'_2, {c'_2}^{-1} \bt^{-1} \at c'_3, {c'_3}^{-1} \at^{-1} c'_1).
\]
and hence $\alpha(x_1,x_2,x_3)$ is of type $\Toa$.
\item $c_2 = \at d_2$ where, $d_2 \in \Acl$.
\[
\alpha(x_1, x_2, x_3)=((c_{1,-})^{-1} \att d_{2,+}, (d_{2,+})^{-1} \at^{-1} \bt^{-1} \att c_{3,+}, (c_{3,+})^{-1} \att^{-1} c_{1,-})
\]
which is of type $\Ttb$ if $c_{1,-}$ is trivial and of type $\Tob$ else.
To see this we distinguish between three different cases:
\begin{itemize}
\item $c_{1,-}$ is trivial: Then 
\begin{align*}
\alpha(x_1, x_2, x_3) &=(\att d_{2,+}, (d_{2,+})^{-1} \at^{-1} \bt^{-1} \att c_{3,+}, (c_{3,+})^{-1} \att^{-1}) \\
 &\overset{(i)}{\sim} ((d_{2,+})^{-1} \at^{-1} \bt^{-1} \att c_{3,+}, (c_{3,+})^{-1} \att^{-1}, \att d_{2,+}) \\
 &\overset{(iv)}{\sim} (\phi_b(d_{2,+})^{-1} \at^{-1} \bt \att c_{3,+}, \phi_b(c_{3,+})^{-1} \att^{-1}, \att \phi_b(d_{2,+})) \\
 &= ({c'_1}^{-1} \at^{-1} \bt \at c'_2, {c'_2}^{-1} \at^{-1}, \at c'_1)
\end{align*}
for $c'_1 = \phi_b(d_{2,+})^{-1}$ and $c'_2 =  c_{3,+}$ and hence of type $\Ttb$. Here $\sim$ denotes the equivalences on triples defined in Definition \ref{defn:equivalent triples} with the corresponding numbering $(i) - (iv)$.
\item $c_{1,-}$ is non-trivial and starts with first letter $\bt$. Then define $d_1$ via $c_{1,-} = \bt d_1$.
Hence $\alpha(x_1, x_2, x_3)$ equals:
\begin{equ*}{ccl}
 & &(d_1^{-1} \bt^{-1} \att d_{2,+}, (d_{2,+})^{-1} \at^{-1} \bt^{-1} \att c_{3,+}, (c_{3,+})^{-1} \att^{-1} \bt d_1) \\
&\overset{(iv)}{\sim} &(\phi_b(d_1)^{-1} \bt \at \phi_b(d_{2,+}), \phi_b(d_{2,+})^{-1} \at^{-1} \bt \at \phi_b(c_{3,+}), \phi_b(c_{3,+})^{-1} \at^{-1} \bt^{-1} \phi_b(d_1)) \\
&= &({c'_1}^{-1} \bt \at c'_2, {c'_2}^{-1} \at^{-1} \bt c'_3, {c'_3}^{-1} \bt^{-1} c'_1)
\end{equ*}
for $c'_1 = \phi_b(d_1)$, $c'_2 = \phi_b(d_{2,+})$, $c'_3 = \at \phi_b(c_{3,+})$ and hence is of type $\Tob$.

\item $c_{1,-}$ is non-trivial and starts with first letter $\bt^{-1}$. Then define $d_1$ via $c_{1,-} = \bt^{-1} d_1$.
Hence $\alpha(x_1, x_2, x_3)$ equals:
\begin{equ*}{ccl}
 & &(d_1^{-1} \bt \att d_{2,+}, (d_{2,+})^{-1} \at^{-1} \bt^{-1} \att c_{3,+}, (c_{3,+})^{-1} \att^{-1} \bt^{-1} d_1) \\
 & \overset{(ii)}{\sim}  
& d_1^{-1} \bt \at c_{3,+}, (c_{3,+})^{-1} \at^{-1} \bt \at d_{2,+}, (d_{2,+})^{-1} \at^{-1} \bt^{-1} d_1) \\
 & = & 
 ({c'_1}^{-1} \bt \at c'_2, {c'_2}^{-1} \at^{-1} \bt c'_3, {c'_3}^{-1} \bt^{-1} c'_1)
\end{equ*}
for $c'_1 = d_1$, $c'_2 = c_{3,+}$, $c'_3 = \at d_{2,+}$ and hence of type $\Tob$.
\end{itemize}

\item $c_2 =  \at^{-1} d_2$ where $d_2 \in \Acl$.
\[
\alpha(x_1, x_2, x_3)=((c_{1,-})^{-1} \att \bt \at^{-1} d_{2,-}, (d_{2,-})^{-1}  \att c_{3,+}, (c_{3,+})^{-1} \att^{-1} c_{1,-})
\]
which is of type $\Tob$ if $c_{3,+}$ is non-trivial and of type $\Ttb$, else. This can be seen analogously to the previous case.
\end{itemize}

\item Type $\Tob$: Suppose $(x_1,x_2,x_3) = (c_1^{-1} \bt \at c_2, c_2^{-1} \at^{-1} \bt c_3, c_3^{-1} \bt^{-1} c_1)$.
Up to equivalence, there are the following sub-cases:
\begin{itemize}
\item Both of $c_1, c_3$ are empty. Then
\[
\alpha(x_1, x_2, x_3)= (  \bt \at c_{2,+}, (c_{2,+})^{-1} \at^{-1} \bt, \bt^{-1} )
\]
which is of type $\Tob$
\item $c_1$ is not empty, $c_3$ is empty.
Then either 
\begin{itemize} 
\item $c_1 = \at d_1$. In this case
\[
\alpha(x_1, x_2, x_3)= ((d_{1,+})^{-1} \at^{-1} \bt \at c_{2,+},(c_{2,+})^{-1} \at^{-1} \bt, \bt^{-1} \at d_{1,+})
\]
which is of type $\Tob$ 
\item $c_1 = \at^{-1} d_1$. In this case
\[
\alpha(x_1, x_2, x_3)= ((d_{1,-})^{-1} \at c_{2,+},(c_{2,+})^{-1} \at^{-1} \bt, \bt^{-1} \at^{-1} d_{1,+})
\]
which is of type $\Toa$.
\end{itemize}
\item $c_1$ is empty and $c_3$ is not. Then either 
\begin{itemize}
\item $c_3 = \at d_3$, in which case
\[
\alpha(x_1, x_2, x_3)= ( \bt \at c_{2,+},(c_{2,+})^{-1} \at^{-1} \bt \at d_{3,+}, (d_{3,+})^{-1} \at^{-1} \bt^{-1})
\]
which is of type $\Tob$. 
\item $c_3 = \at^{-1} d_3$, in which case
\[
\alpha(x_1, x_2, x_3)= ( \bt \at c_{2,+},(c_{2,+})^{-1} \at^{-1} d_{3,-}, (d_{3,-})^{-1} \at \bt^{-1})
\]
which is of type $\Toa$. 
\end{itemize}

\item Both of $c_1, c_3$ are non-empty. Then either 
\begin{itemize}
\item $c_1 = \at d_1$, $c_3 = \at d_3$. In this case
\[
\alpha(x_1, x_2, x_3)= ( (d_{1,+})^{-1} \at^{-1} \bt \at c_{2,+},(c_{2,+})^{-1} \at^{-1} \bt \at d_{3,+}, (d_{3,+})^{-1} \at^{-1} \bt^{-1} \at d_{1,+})
\]
which is of type $\Tob$.
\item $c_1 = \at d_1$, $c_3 = \at^{-1} d_3$. In this case
\[
\alpha(x_1, x_2, x_3)= ( (d_{1,+})^{-1} \at^{-1} \bt \at c_{2,+},(c_{2,+})^{-1} \at^{-1} d_{3,-}, (d_{3,-})^{-1} \at  d_{1,+})
\]
which is of type $\Tob$ if $d_{3,-}$ is non-trivial, and of type $\Ttb$, else.
\item $c_1 = \at^{-1} d_1$, $c_3 = \at d_3$. In this case
\[
\alpha(x_1, x_2, x_3)= ( (d_{1,-})^{-1} \at c_{2,+},(c_{2,+})^{-1} \at^{-1} \bt \at d_{3,+}, (d_{3,+})^{-1} \at^{-1} d_{1,-})
\]
which is of type $\Tob$ if $d_{1,-}$ is non-trivial and of type $\Ttb$, else.
\item $c_1 = \at^{-1} d_1$, $c_3 = \at^{-1} d_3$. In this case
\[
\alpha(x_1, x_2, x_3)= ( (d_{1,-})^{-1} \at c_{2,+},(c_{2,+})^{-1} \at^{-1} \ d_{3,-}, (d_{3,-})^{-1} \at \bt^{-1} \at^{-1} d_{1,-})
\]
which is of type $\Tob$ if $c_{2,+}$ is non-trivial and of type $\Ttb$, else.
\end{itemize}
\end{itemize}

\item Type $\Tta$: Suppose $(x_1,x_2,x_3) = (c_1^{-1} \bt^{-1} \at \bt c_2, c_2^{-1} \bt^{-1}, \bt c_1)$. 
We distinguish between the following cases
\begin{itemize}
\item Both of $c_1, c_2$ are empty. Then
\[
\alpha(x_1,x_2,x_3) = (\bt^{-1} \at \bt, \bt^{-1}, \bt)
\]
which is of type $\Tta$.
\item One of $c_1, c_2$ is empty. Up to equivalence and changing indices we may assume that $c_2$ is empty. Then either
\begin{itemize}
\item $c_1 = \at d_1$ in which case
\[
\alpha(x_1,x_2,x_3) = ((d_{1,+})^{-1} \at^{-1} \bt^{-1} \at \bt, \bt^{-1}, \bt \at d_{1,+})
\]
which is of type $\Tta$ or 
\item $c_1 = \at^{-1} d_1$ in which case
\[
\alpha(x_1,x_2,x_3) = ((d_{1,-})^{-1} \at \bt, \bt^{-1}, \bt \at^{-1} d_{1,-})
\]
which is of type $\Tob$.
\end{itemize}
\item Both of $c_1, c_2$ are non-empty. Then either
\begin{itemize}
\item $c_1 = \at d_1$, $c_2 = \at d_2$ in which case
\[
\alpha(x_1,x_2,x_3) = ((d_{1,+})^{-1} \at^{-1} \bt^{-1} \at d_{2,+}, (d_{2,+})^{-1} \at^{-1} \bt^{-1}, \bt \at d_{1,+})
\]
which is of type $\Tob$ or 
\item $c_1 = \at d_1$, $c_2 = \at^{-1} d_2$ in which case
\[
\alpha(x_1,x_2,x_3) = ((d_{1,+})^{-1} \at^{-1} \bt^{-1} \at \bt \at^{-1} d_{2,-}, (d_{2,-})^{-1} \at \bt^{-1}, \bt \at d_{1,+})
\] 
which is of type $\Tta$ or
\item $c_1 = \at^{-1} d_1$, $c_2 = \at d_2$ in which case
\[
\alpha(x_1,x_2,x_3) = ((d_{1,-})^{-1} \at d_{2,+}, (d_{2,+})^{-1} \at^{-1} \bt^{-1}, \bt \at^{-1} d_{1,-})
\]
which is of type $\Toa$ or 
\item $c_1 = \at^{-1} d_1$, $c_2 = \at^{-1} d_2$ in which case
\[
\alpha(x_1,x_2,x_3) = ((d_{1,-})^{-1} \at \bt \at^{-1} d_{2,-}, (d_{2,-})^{-1} \at \bt^{-1}, \bt \at^{-1} d_{1,-})
\]
which is of type $\Tob$.
\end{itemize}
\end{itemize}

\item Type $\Ttb$: Suppose $(x_1,x_2,x_3) = (c_1^{-1} \at^{-1} \bt \at c_2, c_2^{-1} \at^{-1}, \at c_1)$.
We see that
\[
\alpha(x_1,x_2,x_3) = ((c_{1,+})^{-1} \at^{-1} \bt \at c_{2,+}, (c_{2,+})^{-1} \at^{-1}, \at c_{1,+} )
\]
which is of type $\Ttb$.

\end{enumerate}
This concludes the proof of Lemma \ref{lemma:alpha keeps thin.}.
\end{proof}

\subsection{Brooks Quasimorphisms, Homomorphisms and Letter-Thin Triples} \label{subsec:brooks qm, homomorphisms and letter-thin}

For what follows we want to study how the Brooks quasimorphism
$\eta_0 = \eta_{\at \bt} - \eta_{\bt \at}$ defined in
Example \ref{exmp: extemal brooks quasimorphisms on free group} or certain homomorphisms behave
on letter-thin triples. This will be done in Propositions \ref{prop:letter thin triples and two quasimorphisms} and \ref{prop: letter thin triples and homomorphisms}, respectively. 

\begin{prop} \label{prop:letter thin triples and two quasimorphisms}
Let $\eta_0 = \eta_{\at \bt} - \eta_{\bt \at} \col \F_2 \to \Z$ be as above. Then
\[
|\eta_0(x_1) + \eta_0(x_2) + \eta_0(x_3)| = 1
\]
for every letter-thin triple $(x_1,x_2,x_3)$. In particular $\eta_0(x_1)+\eta_0(x_2)+\eta_0(x_3) \in \{ -1, +1 \}$.
\end{prop}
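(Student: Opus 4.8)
The plan is to use that $\eta_0$ is assembled from subwords of length $2$ and is therefore \emph{local}: for any reduced word $w=\zt_1\cdots\zt_n\in\F_2$ one has
\[
\eta_0(w)=\sum_{i=1}^{n-1}\lambda(\zt_i,\zt_{i+1}),
\]
where $\lambda(\zt,\zt'):=\eta_0(\zt\zt')\in\{-1,0,1\}$ records the ``corner'' of a consecutive pair of letters: $\lambda(\at^\gamma\bt^\delta)$ equals $1$ if $\gamma=\delta$ and $0$ otherwise, $\lambda(\bt^\delta\at^\gamma)$ equals $-1$ if $\gamma=\delta$ and $0$ otherwise, and $\lambda$ vanishes on every other reduced pair. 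Since reversing-and-inverting a word swaps occurrences of a subword with occurrences of its inverse, $\eta_0$ is alternating, so $\lambda(v^{-1},u^{-1})=-\lambda(u,v)$.

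Next I would record that each of the four templates of Definition~\ref{defn:letter-thin}, and hence (after the equivalences) every letter-thin triple, has the \emph{cyclic shape}
\[
(x_1,x_2,x_3)=\big(c_1^{-1}m_1c_2,\; c_2^{-1}m_2c_3,\; c_3^{-1}m_3c_1\big),
\]
with all displayed words reduced and alternating, where the middle words $m_1,m_2,m_3$ have length at most $3$ (for type $\Toa$: $m_1=\at\bt$, $m_2=\bt^{-1}\at$, $m_3=\at^{-1}$) and satisfy the \emph{junction condition} that the first letter of $m_i$ is the inverse of the last letter of $m_{i-1}$, indices mod $3$. Both the cyclic shape and the junction condition are routinely checked to be preserved by the four generating equivalences of Definition~\ref{defn:equivalent triples}.

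The heart of the argument is a cancellation. Feeding the cyclic shape into the local formula and summing over $k$, every consecutive pair of letters interior to some $c_i$ contributes to one $x_k$ while its mirror image contributes through $c_i^{-1}$ to the adjacent factor; these cancel because $\eta_0(c_i^{-1})=-\eta_0(c_i)$. The two boundary pairs at each $c_i$ — one straddling $c_i^{-1}$ and a middle word, the other straddling a middle word and $c_i$ — cancel as well, using $\lambda(v^{-1},u^{-1})=-\lambda(u,v)$ together with the junction condition. What survives is exactly $\eta_0(x_1)+\eta_0(x_2)+\eta_0(x_3)=\eta_0(m_1)+\eta_0(m_2)+\eta_0(m_3)$, and for the four templates this equals $\pm1$ by inspection (for $\Toa$ it is $\eta_0(\at\bt)+\eta_0(\bt^{-1}\at)+\eta_0(\at^{-1})=1+0+0$).

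To conclude for \emph{all} letter-thin triples I would track $\eta_0(m_1)+\eta_0(m_2)+\eta_0(m_3)$ through the generating equivalences: rotation leaves it unchanged; the flip $(x_3^{-1},x_2^{-1},x_1^{-1})$ negates it since $\eta_0$ is alternating; and for $\phi_\at,\phi_\bt$ one uses the identity $\eta_0(\phi_\at w)=N(w)-\eta_0(w)$ (similarly for $\phi_\bt$), where $N(w)$ is the net number of transitions in $w$ from an $\at$-power to a $\bt$-power minus those from a $\bt$-power to an $\at$-power. The junction condition makes the type patterns of $m_1,m_2,m_3$ close up into a cyclic word of types, in which $\at$-to-$\bt$ and $\bt$-to-$\at$ transitions are equinumerous, so $N(m_1)+N(m_2)+N(m_3)=0$ and these moves also merely negate the sum; hence $\eta_0(x_1)+\eta_0(x_2)+\eta_0(x_3)\in\{-1,1\}$ always. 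The step I expect to be the main obstacle is bookkeeping: keeping the cancellation airtight when some of the $c_i$ are empty (so certain boundary pairs are simply absent), and organising the reduction to cyclic shape so that the junction condition is transparently stable under the equivalences. A more pedestrian alternative, probably close to a written-out proof, is to skip the invariance discussion and instead feed the local formula directly through each of the four template types together with their empty-$c_i$ subcases — entirely mechanical, but lengthier.
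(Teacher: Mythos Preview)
Your proposal is correct. The core mechanism---the local/splitting formula $\eta_0(w)=\sum_i\lambda(\zt_i,\zt_{i+1})$ and the observation that the $c_i$-contributions cancel in pairs---is exactly what the paper uses (its Equation~(\ref{equ:split up words brooks}) is your local formula restated). The difference is organisational. The paper handles the equivalences more cheaply: it observes directly that $|\sum\eta_0(x_k)|$ is invariant under rotation and flip, and rather than proving invariance under $\phi_\at,\phi_\bt$ it simply writes each template with parameters $\xt\in\{\at,\at^{-1}\}$, $\yt\in\{\bt,\bt^{-1}\}$, which absorbs those two moves. It then runs your cancellation computation once per type and reads off $|\eta_0(\xt\yt)+\eta_0(\yt^{-1}\xt)|=1$ (and analogues) by inspection, uniformly in the signs; the empty-$c_i$ cases are absorbed by setting the boundary letter $\zt_i=e$, so no sub-casing is needed.

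Your route instead isolates the ``cyclic shape with junction condition'' as a structure stable under all four equivalences, proves once that $\sum\eta_0(x_k)=\sum\eta_0(m_k)$, and then tracks the middle-word sum through the equivalences via the identity $\eta_0(\phi_\at w)=N(w)-\eta_0(w)$ together with $\sum N(m_k)=0$. This is conceptually cleaner and makes the role of the junction condition explicit, at the cost of a longer setup; the paper's version is shorter but hides the structure in a parametrised case check. Your ``pedestrian alternative'' is precisely the paper's proof.
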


\begin{proof}
First note that if $w = w_1 w_2 \in \F_2$ as a reduced word and if $\zt_1$ is the last letter of $w_1$ and $\zt_2$ is the first letter of $w$, then 
\begin{align} \label{equ:split up words brooks}
\eta_0(w) &= \eta_0(w_1) + \eta_0(\zt_1 \zt_2) + \eta_0(w_2).
\end{align}
Let $(x_1,x_2,x_3)$ be a triple. Note that the value
\[
|\eta_0(x_1) + \eta_0(x_2) + \eta_0(x_3)|
\]
is invariant under the equivalences $(i)$ and $(ii)$ of Definition \ref{defn:equivalent triples}. Up to these equivalences we see that any letter-thin triple $(x_1,x_2,x_3)$ is equivalent via $(i)$ and $(ii)$ to the following:
\begin{itemize}
\item Type $\Toa$:  $(c_1^{-1} \xt \yt c_2, c_2^{-1} \yt^{-1} \xt c_3, c_3^{-1} \xt^{-1} c_1)$, for $\xt \in \{ \at, \at^{-1} \}$ and $\yt \in \{ \bt, \bt^{-1} \}$. 
If $c_i$ is empty set $\zt_i = e$. Else let $\zt_i$ be the first letter of $c_i$.
Then, by using successively Equation (\ref{equ:split up words brooks}) we see that
\begin{align*}
\eta_0(x_1)  &= \eta_0(c_1^{-1}) + \eta_0(\zt_1^{-1} \xt) + \eta_0(\xt \yt) + \eta_0(\yt \zt_2) + \eta_0(c_2) \\
\eta_0(x_2) &= \eta_0(c_2^{-1}) + \eta_0(\zt_2^{-1} \yt^{-1}) + \eta_0(\yt^{-1} \xt) + \eta_0(\xt \zt_3) + \eta_0(c_3) \\
\eta_0(x_3) &= \eta_0(c_3^{-1}) + \eta_0(\zt_3^{-1} \xt^{-1}) + \eta_0(\xt^{-1} \zt_1) + \eta_0(c_1)
\end{align*}
Using that $\eta_0(c^{-1}) = - \eta_0(c)$ for any $c \in \F_2$ we see that
\[
|\eta_0(x_1) + \eta_0(x_2) + \eta_0(x_3)| = |\eta_0(\xt \yt) + \eta_0(\yt^{-1} \xt)|
\]
and hence we see that for any choice $\xt \in \{ \at, \at^{-1} \}$, $\yt \in \{ \bt, \bt^{-1} \}$ 
\[
|\eta_0(x_1) + \eta_0(x_2) + \eta_0(x_3)| = 1.
\]
\item Type $\Tob$: $(c_1^{-1} \yt \xt c_2, c_2^{-1} \xt^{-1} \yt c_3, c_3^{-1} \yt^{-1} c_1)$, for $\xt \in \{ \at, \at^{-1} \}$ and $\yt \in \{ \bt, \bt^{-1} \}$. This case is analogous to the previous case.
\item Type $\Tta$: $(c_1^{-1} \yt^{-1} \xt \yt c_2, c_2^{-1} \yt^{-1}, \yt c_1)$, for $\xt \in \{ \at, \at^{-1} \}$ and $\yt \in \{ \bt, \bt^{-1} \}$. Again, if $c_i$ is empty set $\zt_i = e$. Else let $\zt_i$ be the first letter of $c_i$.
By successively using Equation (\ref{equ:split up words brooks}) we see that
\begin{align*}
\eta_0(x_1) &= \eta_0(c_1^{-1}) + \eta_0(\zt_1^{-1} \yt^{-1}) + \eta_0(\yt^{-1} \xt) + \eta_0(\xt \yt) + \eta_0(\yt \zt_2) + \eta_0(c_2) \\
\eta_0(x_2) &= \eta_0(c_2^{-1}) + \eta_0(\zt_2^{-1} \yt^{-1}) \\
\eta_0(x_3) &= \eta_0(\yt \zt_1) + \eta_0(c_1)
\end{align*}
and again we observe that
\begin{align*}
|\eta_0(x_1) + \eta_0(x_2) + \eta_0(x_3)| &= |\eta_0(\yt^{-1} \xt) + \eta_0(\xt \yt)| = 1
\end{align*}
for any choice of $\xt \in \{ \at, \at^{-1} \}$, $\yt \in \{ \bt, \bt^{-1} \}$.

\item Type $\Ttb$: $(c_1^{-1} \xt^{-1} \yt \xt \bt c_2, c_2^{-1} \xt^{-1}, \xt c_1)$, for $\xt \in \{ \at, \at^{-1} \}$ and $\yt \in \{ \bt, \bt^{-1} \}$. This case is analogous to the previous case.
\end{itemize}
\end{proof}

Recall that $\eta_\xt \col \F_2 \to \Z$ denotes the homomorphism which counts the letter $\xt$.
\begin{prop} \label{prop: letter thin triples and homomorphisms}
Let $\eta = \eta_\xt + \eta_\yt \col \F_2 \to \Z$ for 
$\xt \in \{ \at, \at^{-1} \}$ or $\yt \in \{ \bt, \bt^{-1} \}$.
Then
\[
|\eta(x_1) + \eta(x_2) + \eta(x_3)| = 1
\]
for any $(x_1,x_2,x_3)$ letter-thin. In particular $\eta(x_1) + \eta(x_2) + \eta(x_3) \in \{ -1, +1 \}$.
\end{prop}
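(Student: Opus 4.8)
The plan is to exploit the fact that $\eta$, being a sum of the homomorphisms $\eta_\xt$ and $\eta_\yt$, is itself a homomorphism $\F_2 \to \Z$. Unlike the Brooks quasimorphism $\eta_0$ appearing in Proposition \ref{prop:letter thin triples and two quasimorphisms}, a homomorphism satisfies $\delta^1 \eta = 0$, so there are no boundary contributions to keep track of: we simply have $\eta(x_1) + \eta(x_2) + \eta(x_3) = \eta(x_1 x_2 x_3)$, and since $\Z$ is abelian this quantity depends only on the conjugacy class of the product $x_1 x_2 x_3$.

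Next I would observe that $|\eta(x_1 x_2 x_3)|$ is unchanged under the four generating equivalences of Definition \ref{defn:equivalent triples}, so that it suffices to treat a single representative of each of the four types of letter-thin triple. A cyclic permutation $(i)$ leaves the conjugacy class of the product unchanged; the flip $(ii)$ replaces $x_1 x_2 x_3$ by a conjugate of its inverse, which only negates $\eta$; and precomposition with $\phi_\at$ in $(iii)$ (resp. $\phi_\bt$ in $(iv)$) replaces $\eta_\xt$ by $-\eta_\xt$ and fixes $\eta_\yt$ (resp. the other way round), i.e. it merely replaces $\eta$ by another homomorphism of the allowed form $\eta_{\xt'} + \eta_{\yt'}$ and possibly changes the sign of the final answer. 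Hence it is enough to verify the claim for the four standard representatives of types $\Toa$, $\Tob$, $\Tta$ and $\Ttb$ from Definition \ref{defn:letter-thin}.

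Then I would simply multiply out $x_1 x_2 x_3$ in each of the four cases. The auxiliary words $c_1, c_2, c_3$ cancel telescopically, and one is left with $c_1^{-1} \at c_1$ for type $\Toa$, with $c_1^{-1} \bt c_1$ for type $\Tob$, with $c_1^{-1} \bt^{-1} \at \bt c_1$ for type $\Tta$, and with $c_1^{-1} \at^{-1} \bt \at c_1$ for type $\Ttb$. The key point is that in every case $x_1 x_2 x_3$ is conjugate to a single generator, $\at$ or $\bt$. Evaluating the homomorphism then gives $\eta(\at) = \eta_\xt(\at) + \eta_\yt(\at) = (\pm 1) + 0$ and $\eta(\bt) = 0 + (\pm 1)$, so $|\eta(x_1) + \eta(x_2) + \eta(x_3)| = 1$ in all four cases.

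I do not expect a genuine obstacle here. The only thing requiring a little care is the bookkeeping with the equivalence relation and with the behaviour of $\eta$ under $\phi_\at$ and $\phi_\bt$; once one notices that the product of a letter-thin triple is always conjugate to a single generator, the statement is immediate. This is precisely why the homomorphism case is much softer than the Brooks-quasimorphism case of Proposition \ref{prop:letter thin triples and two quasimorphisms}, where the non-vanishing of $\delta^1 \eta_0$ forces one to decompose each $x_i$ into its boundary letters.
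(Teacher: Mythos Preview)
Your proof is correct and follows essentially the same strategy as the paper: reduce to representatives of the four letter-thin types using the equivalence relation, then exploit that $\eta$ is a homomorphism so the $c_i$-contributions cancel. The paper only reduces modulo equivalences $(i)$ and $(ii)$ and then sums $\eta(x_1)$, $\eta(x_2)$, $\eta(x_3)$ separately in each case, obtaining $|\eta(\xt)|=1$; you go one step further and reduce modulo all four equivalences, at the cost of letting $\eta$ vary over all allowed $\eta_{\xt'}+\eta_{\yt'}$.

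Your presentation is slightly more streamlined in one respect: rather than expanding each $\eta(x_i)$ individually, you observe directly that $\eta(x_1)+\eta(x_2)+\eta(x_3)=\eta(x_1x_2x_3)$ and then note the clean fact that for any letter-thin triple the product $x_1x_2x_3$ is conjugate to a single generator $\at$ or $\bt$. The paper's computation amounts to the same cancellation but does not isolate this observation. Either way the argument is a short bookkeeping exercise once one has the homomorphism property.
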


\begin{proof}
Let $\eta$ be as in the proposition and suppose that $(x_1,x_2,x_3)$
is letter-thin.
Just like in the proof of the previous proposition we will consider the four different types of letter thin triples up to equivalences $(i)$ and $(ii)$ of Definition \ref{defn:equivalent triples}.

\begin{itemize}
\item Type $\Toa$:  $(c_1^{-1} \xt \yt c_2, c_2^{-1} \yt^{-1} \xt c_3, c_3^{-1} \xt^{-1} c_1)$, for $\xt \in \{ \at, \at^{-1} \}$ and $\yt \in \{ \bt, \bt^{-1} \}$. We directly calculate, using that $\eta$ is a homomorphism:
\begin{align*}
\eta(x_1) &= \eta(c_1^{-1} \xt \yt c_2) =  -\eta(c_1) + \eta(\xt) + \eta(\yt) + \eta(c_2) \\
\eta(x_2) &= \eta(c_2^{-1} \yt^{-1} \xt c_3) = -\eta(c_2) - \eta( \yt) + \eta(\xt) + \eta(c_3) \\
\eta(x_3) &= \eta(c_3^{-1} \xt^{-1} c_1) = - \eta(c_3) - \eta(\xt) + \eta(c_1)
\end{align*}
and hence
\[
|\eta(x_1)+ \eta(x_2) + \eta(x_3)| = |\eta(\xt)| = 1
\]
for any $\xt \in \{ \at, \at^{-1} \}$.
\item Type $\Tob$: $(c_1^{-1} \yt \xt c_2, c_2^{-1} \xt^{-1} \yt c_3, c_3^{-1} \yt^{-1} c_1)$, for $\xt \in \{ \at, \at^{-1} \}$ and $\yt \in \{ \bt, \bt^{-1} \}$. This case is analogous to the previous case.
\item Type $\Tta$: $(c_1^{-1} \yt^{-1} \xt \yt c_2, c_2^{-1} \yt^{-1}, \yt c_1)$, for $\xt \in \{ \at, \at^{-1} \}$ and $\yt \in \{ \bt, \bt^{-1} \}$. Again we calculate
\begin{align*}
\eta(x_1) &= \eta(c_1^{-1} \yt^{-1} \xt \yt c_2) = - \eta(c_1) - \eta(\yt) + \eta(\xt) + \eta(\yt) + \eta(c_2) \\
\eta(x_2) &= \eta(c_2^{-1} \yt^{-1}) = -\eta(c_2) - \eta(\yt) \\
\eta(x_3) &= \eta(\yt c_1) = \eta(\yt)+\eta(c_1)
\end{align*}
and hence again
\[
|\eta(x_1)+ \eta(x_2) + \eta(x_3)| = |\eta(\xt)| = 1
\]
for any $\xt \in \{ \at, \at^{-1} \}$.
\item Type $\Ttb$: $(c_1^{-1} \xt^{-1} \yt \xt \bt c_2, c_2^{-1} \xt^{-1}, \xt c_1)$, for $\xt \in \{ \at, \at^{-1} \}$ and $\yt \in \{ \bt, \bt^{-1} \}$. This case is analogous to the previous case.
\end{itemize}
\end{proof}

\section{Gaps via Letter-Quasimorphisms} \label{sec:gaps via Letter-Quasimorphisms}

The aim of this section is to define letter-quasimorphisms and deduce the criterion for $1/2$ gaps in $\scl$. There will be two types of letter-quasimorphisms: \emph{(general) letter-quasimorphisms} (Definition~\ref{defn:letter quasihomomorphism}) and \emph{well-behaved letter-quasimorphisms} (Definition \ref{defn:well behaved letter quasimorphisms}). The former is useful for application, the latter will be useful for proofs. 
For each letter-quasimorphism $\Phi \col G \to \Acl$ there will be an associated well-behaved letter-quasimorphism $\tilde{\Phi} \col G \to \Acl$
where $\tilde{\Phi}(g)$ is obtained from $\Phi(g)$ by modifying its beginning and its end; see Proposition \ref{prop:every letter-qm induces well behaved}. 

\subsection{Letter-Quasimorphisms and Well-Behaved Letter-Quasimorphisms} \label{subsec:letter-quasimorphisms and well-behaved letter quasimorphisms}
As always $\Acl$ denotes the set of alternating words of $\F_2$ in the generators $\at$ and $\bt$.
\begin{defn} \label{defn:letter quasihomomorphism}
Let $G$ be a group. We say that $\Phi \col G \to \mathcal{A}$ is a \emph{letter-quasimorphism} if $\Phi$ is alternating, i.e. $\Phi(g^{-1}) = \Phi(g)^{-1}$ for every $g \in G$ and if for every $g,h \in G$ one of the following holds:
\begin{enumerate}
\item \label{defn:letter-qm:thin} $\Phi(g) \Phi(h) \Phi(g h)^{-1} = e$, or
\item \label{defn:letter-qm:general} there are elements $c_1, c_2, c_3 \in \Acl$ and letters $\xt_1, \xt_2, \xt_3$ 
such that either $\xt_1, \xt_2, \xt_3 \in \{ \at, \at^{-1} \}$ and $\xt_1 \xt_2 \xt_3 \in \{ \at, \at^{-1} \}$ or
 $\xt_1, \xt_2, \xt_3 \in \{ \bt, \bt^{-1} \}$ and $\xt_1 \xt_2 \xt_3 \in \{ \bt, \bt^{-1} \}$ which satisfy
 that $\Phi(g) = c_1^{-1} \xt_1 c_2$, $\Phi(h) = c_2^{-1} \xt_2 c_3$ and $\Phi(g h)^{-1} = c_3^{-1} \xt_3 c_1$ as freely reduced alternating words.
\end{enumerate}
\end{defn}

The motivating example for letter-quasimorphisms is the following:
\begin{exmp} \label{exmp:letter quasimorphisms on free group}
Consider the map $\Phi \col \F_2 \to \Acl$ defined as follows. Suppose that $w \in \F_2$ 
has reduced representation $\at^{n_1} \bt^{m_1} \cdots \at^{n_k} \bt^{m_k}$ with all $n_i, m_i \in \Z$ where all but possibly $n_1$ and / or $m_k$ are non-zero. Then set 
\[
\Phi(w) = \at^{\sign(n_1)} \bt^{\sign(m_1)} \cdots \at^{\sign(n_k)} \bt^{\sign(m_k)}
\]
where $\sign \col \Z \to \{ +1, 0, -1 \}$ is defined as usual. This may be seen to be a letter-quasimorphism and will be vastly generalised to amalgamated free products; see Lemma \ref{lemma:amalgamated yields letter-quasimorphism}.
Observe that for any group $G$ and any homomorphism $\Omega \col G \to \F_2$ the map $\Phi \circ \Omega \col G \to \Acl$ is a letter-quasimorphism.
Suppose that $G$ is \emph{residually free}. Then for ever non-trivial element $g \in G$ there is a homomorphism $\Omega_g \col G \to \F_2$ such that $\Omega_g(g) \in \F_2$ is nontrivial.
By applying a suitable automorphism on $\F_2$ to $\Omega_g$ we may assume that $\Omega_g(g)$ starts in a power of $\at$ and ends in a power of $\bt$. Then $\Phi_g := \Phi \circ \Omega_g$ is a letter quasimorphism such that $\Phi_g(g)$ is nontrivial and such that $\Phi_g(g^n) = \Phi_g(g)^n$.
\end{exmp}

\begin{defn} \label{defn:well behaved letter quasimorphisms}
We will call triples $(x_1,x_2,x_3)$ \emph{degenerate} if they are equivalent to a triple $(w, w^{-1}, e)$ for some $w \in \Acl$.

Let $G$ be a group. A map $\Psi \col G \to \Acl$ is called \emph{well-behaved letter-quasimorphism} if $\Psi$ is alternating, i.e. $\Psi(g^{-1}) = \Psi(g)^{-1}$ for every $g \in G$, and for all $g,h \in G$, the triple 
\[
(\Psi(g), \Psi(h), \Psi(gh)^{-1})
\]
is either 
letter-thin (see Definition \ref{defn:letter-thin}) or degenerate.
\end{defn}

\begin{rmk} \label{prop: alpha and beta preserve well-behaved letter-quasimorphisms}
Note that a triple $(x_1,x_2,x_3)$ is degenerate if and only if there is some $w \in \Acl$ such that
$(x_1,x_2,x_3)$ equals $(w,w^{-1},e)$, $(w,e, w^{-1})$ or $(e,w,w^{-1})$.
Note that if $\Phi \col G \to \Acl$ is a well-behaved letter-quasimorphism then also $\alpha \circ \Phi \col G \to \Acl$ and 
$\beta \circ \Phi \col G \to \Acl$ are well-behaved letter-quasimorphisms.
This follows immediately from Lemma \ref{lemma:alpha keeps thin.} and the fact that $\alpha$ (resp. $\beta$) satisfies $\alpha(w^{-1}) = \alpha(w)^{-1})$ (resp. $\beta(w^{-1}) = \beta(w)^{-1})$) for any $w \in \Acl$.
\end{rmk}

It is easy to see that every well-behaved letter-quasimorphism is also a letter-quasimorphism. 
The contrary does not hold.
The map $\Phi \col \F_2 \to \Acl$ described in Example \ref{exmp:letter quasimorphisms on free group}
is a letter-quasimorphism but not a well-behaved letter-quasimorphism. For example for $g= \at$, $h = \at$ we obtain
$(\Phi(g), \Phi(h), \Phi(h^{-1} g^{-1})) = (\at, \at, \at^{-1})$, which is neither letter-thin nor degenerate.

However, we may 
assign to each letter-quasimorphism $\Phi$ a well-behaved letter-quasimorphism $\tilde{\Phi}$.
This will be done 
by pre-composing $\Phi$ with a map $w \mapsto \tilde{w}$ defined as follows.

 Set $\tilde{w} = e$ whenever  $w \in \{ \at, e, \at^{-1} \}$. Else let $\zt_s$ be the first and $\zt_e$ be the last letter of $w \in \Acl$. Define $\tilde{w}$ as the reduced element in $\F_2$ freely equal to
$\tilde{w} := \zeta_s(\zt_s) w \zeta_e(\zt_e)$ where 
\[
\zeta_s(\zt) =
\begin{cases} e & \text{ if } \zt=\at \\
\at &  \text{ if } \zt = \bt \text{ or } \bt^{-1} \\
\at^2 & \text{ if } \zt = \at^{-1}
\end{cases}
\]
and
\[
\zeta_e(\zt) =
\begin{cases} e & \text{ if } \zt = \at^{-1} \\
\at^{-1} & \text{ if } \zt =  \bt \text{ or } \bt^{-1} \\
\at^{-2} & \text{ if } \zt = \at.
\end{cases}
\]
The key point is that $\tilde{w}$ starts with $\at$ and ends with $\at^{-1}$, unless $w \in \{ \at, e, \at^{-1} \}$. 
Observe that $\zeta_e(\zt)^{-1} = \zeta_s(\zt)$, and hence the map $w \mapsto \tilde{w}$ is alternating, i.e. $\widetilde{w^{-1}}= \tilde{w}^{-1}$.
For example, $\at \mapsto e$, $\at \bt \at^{-1} \mapsto \at \bt \at^{-1}$  and $\at^{-1} \bt \at \bt \at \mapsto \at \bt \at \bt \at^{-1}$.

If $\Phi \col G \to \Acl$ is a letter-quasimorphism then we define $\tilde{\Phi} \col G \to \Acl$ via $\tilde{\Phi}(g) := \widetilde{\Phi(g)}$.
\begin{prop} \label{prop:every letter-qm induces well behaved}
If $\Phi \col G \to \Acl$ is a letter-quasimorphism then $\tilde \Phi \col G \to \Acl$ is a well-behaved letter-quasimorphism, called the \emph{associated} well-behaved letter-quasimorphism.
\end{prop}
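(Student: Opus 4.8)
\emph{Setup.} The plan is to verify the two requirements of Definition \ref{defn:well behaved letter quasimorphisms} for $\tilde\Phi$. The ``alternating'' requirement is immediate: the paper already notes that $w\mapsto\tilde w$ satisfies $\widetilde{w^{-1}}=\tilde w^{-1}$, so $\tilde\Phi(g^{-1})=\widetilde{\Phi(g^{-1})}=\widetilde{\Phi(g)^{-1}}=\tilde\Phi(g)^{-1}$ using that $\Phi$ is alternating. The same identity gives $\tilde\Phi(gh)^{-1}=\widetilde{\Phi(gh)^{-1}}$, so it suffices to fix $g,h\in G$, set $x_1:=\Phi(g)$, $x_2:=\Phi(h)$, $x_3:=\Phi(gh)^{-1}$, and prove that $(\tilde x_1,\tilde x_2,\tilde x_3)$ is letter-thin or degenerate, knowing that $(x_1,x_2,x_3)$ satisfies (\ref{defn:letter-qm:thin}) or (\ref{defn:letter-qm:general}) of Definition \ref{defn:letter quasihomomorphism}. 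Two elementary facts about $\tilde\cdot$ will be used throughout: first, $\tilde w$ is either $e$ (exactly when $w\in\{\at,e,\at^{-1}\}$) or an alternating word beginning with $\at$ and ending with $\at^{-1}$; second, whenever $w\notin\{\at,e,\at^{-1}\}$ one has $\tilde w=\mu^{-1}\,w\,\nu$ with $\mu,\nu$ powers of $\at$ depending only on the first, resp.\ last, letter of $w$, arranged so that if $w$ ends in a letter $\ell$ and $w'$ begins in $\ell^{-1}$ then the $\nu$ attached to $w$ equals the $\mu$ attached to $w'$ (this is the identity on $\zeta_s,\zeta_e$ underlying the ``alternating'' property).

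\emph{Case (\ref{defn:letter-qm:general}), $\at$-version.} Here $x_i=c_i^{-1}\xt_i c_{i+1}$ (indices mod $3$) with $\xt_i\in\{\at,\at^{-1}\}$ not all equal and each $c_i$ empty or starting with a power of $\bt$. If all $c_i$ are nonempty then the last letter of $x_i$ and the first letter of $x_{i+1}$ are mutually inverse $\bt$-powers, so the $\nu$ of $x_i$ is the $\mu$ of $x_{i+1}$, giving $\tilde x_i=(c_i\mu_i)^{-1}\xt_i(c_{i+1}\mu_{i+1})$; writing $c_i'$ for the reduced form of $c_i\mu_i$, each $c_i'$ is again nonempty, alternating and starts with a power of $\bt$ (appending an $\at$-power to the end of $c_i$ cannot destroy its leading $\bt$-power). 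Thus $(\tilde x_1,\tilde x_2,\tilde x_3)$ has the shape of Equation (\ref{equ:maybe letter thin}) with all arms nonempty, hence is letter-thin of type $\Toa$ by the discussion after Definition \ref{defn:letter-thin}. When one or more $c_i$ is empty I compute $(\tilde x_1,\tilde x_2,\tilde x_3)$ by hand in each configuration: the padding built into $\tilde\cdot$ either supplies a nonempty arm exactly where one was missing --- so the triple is letter-thin, now possibly of type $\Tt$ --- or it forces some $\tilde x_i=e$ while making the other two mutually inverse, i.e.\ a degenerate triple. The $\bt$-version of (\ref{defn:letter-qm:general}) must be redone from scratch since $\tilde\cdot$ is asymmetric in $\at,\bt$, but the same dichotomy holds.

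\emph{Case (\ref{defn:letter-qm:thin}).} Now $x_1x_2x_3=e$. Take a reduced tripod decomposition $x_i=p_i^{-1}p_{i+1}$. Since each $x_i$ is alternating and there are only two letter-types, the first letters of $p_1,p_2,p_3$ cannot be pairwise of distinct types (that would properly $2$-colour a triangle), so some $p_j$ is trivial; rotating the triple (equivalence (i) of Definition \ref{defn:equivalent triples}, which stays in case (\ref{defn:letter-qm:thin}) and is compatible with $\tilde\cdot$) we may assume $p_1=e$, so $(x_1,x_2,x_3)=(p_2,\,p_2^{-1}p_3,\,p_3^{-1})$. If some $x_i\in\{\at,e,\at^{-1}\}$ then $\tilde x_i=e$, and $x_1x_2x_3=e$ forces the other two modified entries to be mutually inverse, so the triple is degenerate. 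Otherwise all three formulas $\tilde x_i=\mu_i^{-1}x_i\nu_i$ apply; with $p_2,p_3$ nonempty one still has $\nu_1=\mu_2$ and $\nu_2=\mu_3$, while the remaining seam relation fails, and a short computation of $\mu_1$ and $\nu_3$ from the (necessarily distinct-type) first letters of $p_2,p_3$ shows $\mu_1^{-1}\nu_3$ is a single letter $\at^{\pm1}$. Hence $\tilde x_1\tilde x_2\tilde x_3=\mu_1^{-1}(x_1x_2x_3)\nu_3=\at^{\pm1}$, and reading off the three modified words exhibits $(\tilde x_1,\tilde x_2,\tilde x_3)$ as a letter-thin triple (of type $\To$ or $\Tt$ according to which $p_i$ are short).

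\emph{Main obstacle.} The genuine work is the bookkeeping in Case (\ref{defn:letter-qm:general}) when $(x_1,x_2,x_3)$ itself is not letter-thin because the relevant arm is empty: one must check, over every configuration of empty arms and every degeneracy of the $x_i$, and separately for the $\at$- and $\bt$-versions, that $\tilde\cdot$ always produces either a nonempty arm in the correct position --- so the triple becomes letter-thin, sometimes switching between type $\To$ and type $\Tt$ --- or an honestly degenerate triple, and in each case to write down the explicit moves (i)--(iv) of Definition \ref{defn:equivalent triples} putting the result into one of the normal forms of Definition \ref{defn:letter-thin}. No single verification is hard; there are simply many of them, and getting the padding to land the triple in the right normal form is precisely where the particular choices of $\zeta_s$ and $\zeta_e$ are used.
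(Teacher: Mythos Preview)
Your proposal is essentially correct and follows the same strategy as the paper: verify alternating, then split into the two cases of Definition~\ref{defn:letter quasihomomorphism} and check by explicit computation that $(\tilde x_1,\tilde x_2,\tilde x_3)$ is letter-thin or degenerate. The organization differs slightly---the paper first isolates a Claim that equivalences $(i)$ and $(ii)$ commute with $w\mapsto\tilde w$ up to $\sim$, then uses this to normalise Case~(\ref{defn:letter-qm:general}) to the form $(c_1^{-1}\xt c_2,\,c_2^{-1}\xt c_3,\,c_3^{-1}\xt^{-1}c_1)$ with $\xt\in\{\at,\bt\}$ before doing the eightfold case split on which $c_i$ vanish, whereas you keep general $\xt_i$ and handle Case~(\ref{defn:letter-qm:thin}) via a tripod decomposition---but the substance is the same. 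One small correction: in the $\bt$-version of Case~(\ref{defn:letter-qm:general}) the dichotomy does \emph{not} hold; because $\tilde\cdot$ pads with $\at$-powers, every empty $c_i$ gets replaced by a nonempty $d_i=\at^{-1}$, so the resulting triple is always letter-thin of type~$\Tob$ and no degenerate outcomes occur there.
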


\begin{proof}
As $w \mapsto \tilde{w}$ commutes with taking inverses, if $\Phi$ is alternating then so is $\tilde{\Phi}$. 
In what follows we will use the following easy to check claim.
\begin{claim}
Let $(x_1,x_2,x_3)$ be an arbitrary triple obtained from $(y_1,y_2,y_3)$ by applying a sequence of the equivalences $(i)$ and $(ii)$ of Definition \ref{defn:equivalent triples}. Then $(\tilde{x}_1, \tilde{x}_2, \tilde{x}_3) \sim (\tilde{y}_1,\tilde{y}_2,\tilde{y}_3)$. In this case we say that the triples $(x_1,x_2,x_3)$ and $(y_1,y_2,y_3)$ are \emph{equivalent up to rotation and inverses}.
\end{claim}
Let $g,h \in G$. We wish to show that 
$(\tilde{\Phi}(g), \tilde{\Phi}(h), \tilde{\Phi}(gh)^{-1})$ is a letter-thin triple or degenerate, i.e. equivalent to $(w, w^{-1}, e)$ for some $w \in \Acl$.
If $(\Phi(g), \Phi(h), \Phi(gh)^{-1})$ is equivalent up to rotation and inverses to $(u_1,u_2,u_3)$ the above claim implies that it suffices to check that $(\tilde{u}_1,\tilde{u}_2,\tilde{u}_3)$
is either letter-thin or equivalent to $(w, w^{-1}, e)$.

First suppose that $g,h$ are as in Case (\ref{defn:letter-qm:thin}) of Definition \ref{defn:letter quasihomomorphism} i.e.
 $\Phi(g) \Phi(h) \Phi(gh)^{-1} = e$. 
If one of $\Phi(g)$, $\Phi(h)$ and $\Phi(gh)$ are trivial then the two other elements are inverses.
Hence, up to rotation and taking inverses we may assume that 
\[
(\Phi(g), \Phi(h), \Phi(gh)^{-1})=(u,u^{-1},e)
\]
for some $u \in \Acl$. 
Hence $(\tilde{u}, \tilde{u}^{-1}, e)$ is degenerate.

If none of $\Phi(g)$, $\Phi(h)$ and $\Phi(gh)^{-1}$ are trivial then, as $\Phi$ maps to alternating elements, there are elements $u_1, u_2$ such that $u_1$ ends in a power of $\at$ and $u_2$ starts in a power of $\bt$, such that 
$(\Phi(g), \Phi(h), \Phi(gh))$ is equivalent up to rotation and taking inverses to $(u_1, u_2, u_3)$ where 
$u_3 = u_2^{-1} u_1^{-1}$ as a reduced word. 
Further, write $u_1 = u_1' \xt$ as a reduced word for $\xt \in \{ \at, \at^{-1} \}$ and an appropriate word $u_1' \in \Acl$. If $u_1'$ is empty, then $\tilde{u}_1=e$. Let $\zt_2$ be the last letter of $u_2$. Then
\[
(\tilde{u}_1, \tilde{u}_2, \tilde{u}_3) = (e, \at u_2 \zeta_e(\zt_2), \zeta_e(\zt_2)^{-1} u_2^{-1} \at^{-1} )
\]
which is equivalent to $(w, w^{-1}, e)$ for $w = \at u_2 \zeta_e(\zt_2)$.
If $u_1'$ is non-empty, let $\zt_1$ be the first letter of $u_1'$ and as before let $\zt_2$ be the last letter of $u_2$.
Then
\[
(\tilde{u}_1, \tilde{u}_2, \tilde{u}_3) = (\zeta_s(\zt_1) u_1' \at^{-1} , \at u_2 \zeta_e(\zt_2), \zeta_e(\zt_2)^{-1} u_2^{-1}  \xt^{-1} u'^{-1}_1 \zeta_s(\zt_1)^{-1} )
\]
which can be seen to be letter-thin of type $\Toa$.
This shows that $(\tilde{\Phi}(g), \tilde{\Phi}(h), \tilde{\Phi}(gh)^{-1})$ is letter-thin or degenerate if $\Phi(g) \Phi(h) \Phi(gh)^{-1} = e$.

Hence, suppose that $g,h$ are as in Case (\ref{defn:letter-qm:general}) of Definition \ref{defn:letter quasihomomorphism}.
Then $(\Phi(g), \Phi(h), \Phi(gh))$  is equivalent up to rotation and inverses to 
\[
(u_1,u_2,u_3) = (c_1^{-1} \xt c_2, c_2^{-1} \xt c_3, c_3^{-1} \xt^{-1} c_1) 
\]
for $\xt \in \{ \at, \bt \}$ where $c_1,c_2,c_3 \in \Acl$ are \emph{arbitrary} i.e. we do not assume that $c_2$ is non-empty as in Definition \ref{defn:letter-thin}.
First, suppose that $\xt = \bt$.
Define
\[
d_i = \begin{cases} 
c_i \zeta_e(\zt_i) & \text{ if } c_i \not = e  \\
 \at^{-1} & \text{ else} 
 \end{cases}
\]
where $\zt_i$ is the last letter of $c_i$. We may see then, that 
\[
(\tilde{u}_1, \tilde{u}_2, \tilde{u}_3) = (d_1^{-1} \bt d_2, d_2^{-1} \bt d_3, d_3^{-1} \bt^{-1} d_1) 
\]
which is letter thin of type $\Tob$ as all $d_i$'s are non trivial.

Hence, suppose that $\xt = \at$. For what follows, if $c_i$ is non-empty, we will denote by $\zt_i$ the last letter of $c_i$ and let $d_i$ be the freely reduced word represented by $c_i \zeta_e(\zt_i)$. Observe that if $c_i$ is non-empty then so is $d_i$.

There are the following cases:
\begin{itemize}
\item[(i)] $c_1 \not = e$, $c_2 \not = e$, $c_3 \not = e$: Then
$(\tilde{u}_1, \tilde{u}_2, \tilde{u}_3) = (d_1^{-1} \at d_2, d_2^{-1} \at d_3, d_3^{-1} \at^{-1} d_1) $
\item[(ii)] $c_1 \not = e$, $c_2 \not = e$, $c_3 = e$: Then
$(\tilde{u}_1, \tilde{u}_2, \tilde{u}_3) = (d_1^{-1} \at d_2, d_2^{-1} \at^{-1} ,  \at d_1) $
\item[(iii)] $c_1 \not = e$, $c_2  = e$, $c_3 \not = e$: Then
$(\tilde{u}_1, \tilde{u}_2, \tilde{u}_3) = (d_1^{-1} \at^{-1} ,  \at d_3, d_3^{-1} \at^{-1} d_1) $
\item[(iv)] $c_1  = e$, $c_2 \not = e$, $c_3 \not = e$: Then
$(\tilde{u}_1, \tilde{u}_2, \tilde{u}_3) = (\at d_2, d_2^{-1} \at d_3, d_3^{-1} \at^{-1} ) $
\item[(v)] $c_1 \not = e$, $c_2  = e$, $c_3  = e$: Then
$(\tilde{u}_1, \tilde{u}_2, \tilde{u}_3) = (d_1^{-1} \at, e ,  \at^{-1} d_1) $
\item[(vi)] $c_1  = e$, $c_2 \not = e$, $c_3  = e$: Then
$(\tilde{u}_1, \tilde{u}_2, \tilde{u}_3) = ( \at d_2, d_2^{-1} \at^{-1} , e ) $
\item[(vii)] $c_1  = e$, $c_2  = e$, $c_3 \not = e$: Then
$(\tilde{u}_1, \tilde{u}_2, \tilde{u}_3) = ( e, \at d_3, d_3^{-1} \at^{-1} ) $
\item[(viii)] $c_1  = e$, $c_2  = e$, $c_3  = e$: Then
$(\tilde{u}_1, \tilde{u}_2, \tilde{u}_3) = (e, e, e) $
\end{itemize}
and 
cases $(i)-(iv)$ can be seen to be letter-thin of type $\Toa$ and cases $(v)-(viii)$ can be seen to be degenerate. This completes the proof.
\end{proof}

Both letter-quasimorphisms and well-behaved letter-quasimorphisms are examples of \emph{quasimorphism} in the sense of Hartnick--Schweitzer \cite{hartnick-schweitzer}; see Subsection \ref{subsec:generalised qm}.
Let $\Phi$ be a letter-quasimorphism and let $\bar{\eta} \col \F_2 \to \R$ be an ordinary homogeneous quasimorphism with defect $D$ which vanishes on the generators $\at, \bt$.
We wish to calculate the defect of $\bar{\eta}\circ \Phi$. Fix $g,h \in G$. If $\Phi(g) \Phi(h) = \Phi(gh)$, then
\[
|\bar{\eta} \circ \Phi(g) + \bar{\eta} \circ \Phi(h) - \bar{\eta} \circ \Phi(gh) | \leq D
\]
Else, up to rotating the factors we see that  
\[
(\Phi(g), \Phi(h), \Phi(gh)^{-1}) = (d_1^{-1} \xt d_2, d_2^{-1} d_3, d_3^{-1} d_1)
\] for some appropriate $d_1,d_2,d_3 \in \Acl$, $\xt \in \{ \at, \at^{-1}, \bt, \bt^{-1} \}$. Then, as $\bar{\eta}$ is homogeneous  $\bar{\eta}(d_1^{-1} \xt d_2) =
\bar{\eta}(\xt d_2 d_1^{-1})$ and hence $|\bar{\eta}(\xt d_2 d_1^{-1})- \bar{\eta}(d_2 d_1^{-1})| \leq D$ as we assumed that $\bar{\eta}$ vanishes on the generators. Then we may estimate
\[
|\bar{\eta} \circ \Phi(g) + \bar{\eta} \circ \Phi(h) + \bar{\eta} \circ \Phi(gh)^{-1} | = |\bar{\eta}(d_1^{-1} \xt d_2) + \bar{\eta}(d_2^{-1} d_3) + \bar{\eta}(d_3^{-1} d_1)| \leq 4D
\]
and after homogenisation of $\phi = \bar{\eta} \circ \Phi(g)$ we estimate that $D(\bar{\phi}) \leq 8D$ using that homogenisation at most doubles the defect; see Proposition \ref{prop:defect of homogenisation doubles}.
Hence if $\Phi(g) \in \F_2'$ is such that $\Phi(g^n) = w^n$ for some non-trivial $w \in \Acl$ which also lies in the commutator subgroup $\F'$ and $\eta \col \F_2 \to \R$ is homogenous and extremal to $\Phi(g)$ with defect $1$ then, by Bavard, \[
\scl(g) \geq \frac{\bar{\phi}(g)}{16} \geq \frac{\bar{\eta}(\Phi(g))}{16} = \frac{\scl(\Phi(g))}{8}
\]
and in particular $\scl(g) \geq 1/16$.
This is already a good estimate but we see that we can do much better; see Theorem \ref{thm:main}. 

We will see that this notion is much more flexible than homomorphisms. There are groups $G$ such that for every non-trivial element $g \in G'$ there is a letter-quasimorphisms $\Phi$ such that $\Phi(g)$ is non-trivial.
This may be possible even if the group $G$ is not residually free, for example if $G$ is a right-angled Artin group; see Section \ref{sec:RAAGs and scl}.

\subsection{Main Theorem}

We now deduce our main criterion for $1/2$-gaps in $\scl$:
\begin{thm} \label{thm:main}
Let $G$ be a group and let $g_0 \in G$. Suppose there is a letter-quasimorphism $\Phi \col G \to \Acl$ such that $\Phi(g_0)$ is non-trivial and that  $\Phi(g_0^n) = \Phi(g_0)^n$ for all $n \in \N$.
Then there is an explicit homogeneous quasimorphism $\bar{\phi} \col G \to \R$ with $D(\bar{\phi}) \leq 1$ such that $\bar{\phi}(g_0) \geq 1$.
If $g_0 \in G'$, then $\textrm{scl}(g_0) \geq 1/2$. 

If $G$ is countable then there is an action $\rho \col G \to \mathrm{Homeo}^+(S^1)$ such that $[\delta^1 \bar{\phi}]=\rho^*\eurm^\R_b \in \Hrm^2_b(G,\R)$, for $\eurm^\R_b$ the real bounded Euler class.
\end{thm}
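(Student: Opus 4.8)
The plan is to first replace $\Phi$ by the associated \emph{well-behaved} letter-quasimorphism $\tilde\Phi$ of Proposition~\ref{prop:every letter-qm induces well behaved}; since $w\mapsto\tilde w$ only alters a bounded prefix and suffix, one still has $\tilde\Phi(g_0^n)=c\,\Phi(g_0)^n\,c'$ for fixed words $c,c'$, and $\Phi(g_0)$ must be a non-trivial alternating word of \emph{even} length (else $\Phi(g_0)^n=\Phi(g_0^n)$ would fail to be alternating for large $n$). The whole construction then outputs a quasimorphism $\phi\col G\to\Z$ of the shape $\phi=\eta\circ\Psi$, where $\Psi\col G\to\Acl$ is a well-behaved letter-quasimorphism built from $\tilde\Phi$ and $\eta\col\F_2\to\Z$ is a ``basic'' quasimorphism; the point will be that $\delta^1\phi$ takes values in $\{-1,0,1\}$ while $\bar\phi(g_0)\ge 2$.

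There are two cases. If $[\Phi(g_0)]\in\F_2'$, apply $\alpha$ and $\beta$ alternately: by Remark~\ref{prop: alpha and beta preserve well-behaved letter-quasimorphisms} each composite stays a well-behaved letter-quasimorphism, and by Propositions~\ref{prop:alpha on conjugacy classes decreases} and~\ref{prop:powers of alpha, beta} the lengths $|[\Psi(g_0^n)]|$ strictly drop until one reaches a well-behaved letter-quasimorphism $\Psi$ with $\Psi(g_0^n)=d_1([\at,\bt]^m)^{\,n-N}d_2$ for $n\ge N$ and some $m\neq 0$. Then put $\eta=\eta_0=\eta_{\at\bt}-\eta_{\bt\at}$, so $\phi=\eta_0\circ\Psi$ and, computing as in Example~\ref{exmp: extemal brooks quasimorphisms on free group}, $\bar\phi(g_0)=\bar\eta_0([\at,\bt]^m)=2m$. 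If instead $[\Phi(g_0)]\notin\F_2'$, note that every even-length alternating word uses $k$ letters of each type, so the abelianisation $(p,q)$ of $\Phi(g_0)$ is a non-zero vector with $p\equiv q\bmod 2$; hence one can choose a homomorphism $\eta\in\{\pm\eta_\at,\pm\eta_\bt,\pm\eta_\at\pm\eta_\bt\}$ as in Proposition~\ref{prop: letter thin triples and homomorphisms} with $\eta(\Phi(g_0))\ge 2$, and set $\Psi=\tilde\Phi$, $\phi=\eta\circ\Psi$, so $\bar\phi(g_0)=\eta(\Phi(g_0))\ge 2$.

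In either case, for all $g,h\in G$ the triple $(\Psi(g),\Psi(h),\Psi(gh)^{-1})$ is letter-thin or degenerate; Proposition~\ref{prop:letter thin triples and two quasimorphisms} (resp.\ Proposition~\ref{prop: letter thin triples and homomorphisms}), together with the fact that $\eta_0$ and the homomorphisms $\eta$ are alternating, gives $\delta^1\phi(g,h)=\eta(\Psi(g))+\eta(\Psi(h))+\eta(\Psi(gh)^{-1})\in\{-1,0,1\}$ (a degenerate triple $(w,w^{-1},e)$ contributing $0$), so $\phi$ is a quasimorphism with $D(\phi)\le 1$. Let $\bar\phi$ be its homogenisation; by Proposition~\ref{prop:defect of homogenisation doubles}, $D(\bar\phi)\le 2D(\phi)\le 2$, and $\bar\phi(g_0)\ge 2$. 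Rescaling, $\bar\psi:=\pm\tfrac12\bar\phi$ is a homogeneous quasimorphism with $D(\bar\psi)\le 1$ and $\bar\psi(g_0)\ge 1$, which is the first assertion; if moreover $g_0\in G'$, Bavard's Duality Theorem~\ref{thm:Bavards duality} gives $\scl(g_0)\ge\bar\psi(g_0)/(2D(\bar\psi))\ge\tfrac12$.

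It remains, for countable $G$, to realise $[\delta^1\bar\psi]$ by a circle action. The cochain $\phi\col G\to\Z$ has coboundary $\omega:=\delta^1\phi$ valued in $\{-1,0,1\}$, and $[\omega]\in\Hrm^2_b(G,\Z)$ has trivial image under the comparison map, being $\delta^1$ of an integral cochain. The plan is to adjust $\omega$ by a bounded integral coboundary to a cohomologous cocycle $\omega'$ valued in $\{0,1\}$ that matches the normalisation of $\bar\psi$, then invoke Theorem~\ref{thm:ghys} to obtain $\rho\col G\to\Homeorm^+(S^1)$ with $\rho^*\eurm_b=[\omega']$; passing to $\R$-coefficients and using $[\delta^1\phi]=[\delta^1\bar\phi]$ (as $\bar\phi-\phi$ is bounded) and the identification $[\delta^1\bar\psi]=[\omega']$ in $\Hrm^2_b(G,\R)$ then yields $[\delta^1\bar\psi]=\rho^*\eurm^\R_b$. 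I expect this last step to be the main obstacle: a $\{0,1\}$-valued representative is \emph{not} automatic from $\delta^1\phi\in\{-1,0,1\}$ (it already fails for $\phi=\eta_0$ on $\F_2$, where $D(\bar\eta_0)=2$), so one must exploit the reductions by $\alpha,\beta$ forcing $\Psi(g_0^n)$ into the form $d_1([\at,\bt]^m)^{\,n-N}d_2$ to see that the relevant class is of ``Euler type'' with the right normalisation. Conversely, once $\rho$ is produced, the defect bound becomes transparent: $\rho$ lifts to $\Homeorm^+_\Z(\R)$ since its ordinary pulled-back Euler class vanishes, and $\bar\psi$ differs from the pullback of the translation-number quasimorphism (of defect $1$) by a homomorphism, so $D(\bar\psi)\le 1$ directly; for uncountable $G$ this suffices, since $D(\bar\psi)$ is computed on the $2$-generated subgroups $\langle g,h\rangle$, to which the countable case applies.
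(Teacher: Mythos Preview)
Your overall architecture matches the paper's: pass to the well-behaved $\tilde\Phi$, iterate $\alpha,\beta$, pull back $\eta_0$ or a homomorphism $\eta_\xt+\eta_\yt$, and rescale. Two points need correction.

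\textbf{The case split is not stable under iteration.} You assume that if $[\Phi(g_0)]\in\F_2'$ then iterating $\bar\alpha,\bar\beta$ eventually lands on $[\at,\bt]^m$. This is false: $\bar\alpha,\bar\beta$ need \emph{not} preserve the commutator subgroup. For instance $w=\at\bt\at\bt^{-1}\at^{-1}\bt\at^{-1}\bt^{-1}\in\F_2'$ has $\bar\alpha([w])=[\at\bt^{-1}\at^{-1}\bt^{-1}]\notin\F_2'$. The paper's fix is exactly what you would guess: iterate, and at each stage check whether $\bar\gamma_i([w])$ lies in $\F_2'$. If it stays in $\F_2'$ forever, the length eventually stabilises and Proposition~\ref{prop:alpha on conjugacy classes decreases} forces the class to be $[\at,\bt]^k$; otherwise take the \emph{first} $N$ with $\bar\gamma_N([w])\notin\F_2'$ and use a homomorphism at that stage (not at stage $0$). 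Your parity argument for the homomorphism case is correct and is essentially the paper's $|\eta_\at|+|\eta_\bt|\ge2$.

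\textbf{The $\{0,1\}$-cocycle.} You correctly identify that $\delta^1\phi\in\{-1,0,1\}$ is not enough for Ghys, and your proposed workaround is not a proof. The paper's device is simple and you should adopt it: do \emph{not} set $\psi=\eta\circ\Psi$ on the nose, but rather
\[
\psi(g)=\begin{cases}\eta(\Psi(g)) & \text{if }\Psi(g)\neq e,\\ 1 & \text{if }\Psi(g)=e.\end{cases}
\]
On letter-thin triples Propositions~\ref{prop:letter thin triples and two quasimorphisms} and~\ref{prop: letter thin triples and homomorphisms} give $\delta^1\psi\in\{\pm1\}$ as before; on a degenerate triple $(v,v^{-1},e)$ the two nontrivial terms cancel and the artificially inserted $1$ forces $\delta^1\psi=\pm1$ again; on $(e,e,e)$ one gets $1+1-1=1$. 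Thus $\delta^1\psi$ takes values in $\{-1,+1\}$ \emph{exactly}, so $\phi:=(\psi+1)/2$ is integer-valued with $\delta^1\phi\in\{0,1\}$, and Theorem~\ref{thm:ghys} applies directly. Since $\bar\phi=\bar\psi/2$ and $D(\psi)=1$, one gets $D(\bar\phi)\le1$ and $\bar\phi(g_0)\ge1$ without any further rescaling.
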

In particular, the $\Phi(g_0) \in \Acl$ of the Theorem has to be alternating and of \emph{even length}, else $\Phi(g_0)^n$ would not be an alternating word.

\begin{proof}
Let $\Phi \col G \to \Acl$ be the letter-quasimorphism as in the theorem and let $\tilde{\Phi} \col G \to \Acl$ be the associated well behaved letter-quasimorphism described above.
As $\tilde \Phi(g_0)$ is obtained from $\Phi(g_0)$ by just possibly changing the beginning and the end of the word $\Phi(g_0)$, it is easy to see that there 
are words $c_1, c_2, w \in \mathcal{A}$ such that $\tilde \Phi(g_0^n) = c_1^{-1} w^{n-1} c_2$ as a freely reduced word for all $n \geq 1$.

Consider the sequence $\gamma_i$ of maps $\gamma_i \col \mathcal{A} \to \mathcal{A}$ 
defined via $\gamma_0 = id$, $\gamma_{2k+1} = (\alpha \circ \beta )^k \circ \alpha$ and $\gamma_{2k} = (\beta \circ \alpha)^k$ and note that $\gamma_i$ is either $\alpha \circ \gamma_{i-1}$ or $\beta \circ \gamma_{i-1}$; see Definition \ref{defn:alpha and beta}.
Analogously define the sequence $\bar{\gamma}_i \col \bar{\Acl}_0 \to \bar{\Acl}_0$ of maps via
$\bar{\gamma}_0 = id$, $\bar{\gamma}_{2k+1} = (\bar{\alpha} \circ \bar{\beta} )^k \circ \bar{\alpha}$ and $\bar{\gamma}_{2k} = (\bar{\beta} \circ \bar{\alpha})^k$ and note that every $\bar{\gamma}_i$ is either $\bar{\alpha} \circ \bar{\gamma}_{i-1}$ or $\bar{\beta} \circ \bar{\gamma}_{i-1}$; see Definition \ref{defn:maps alpha bar and beta bar}.
For every letter-thin triple $(x_1, x_2, x_3)$ also $\gamma_i(x_1, x_2, x_3)$ is letter-thin by multiple applications of Lemma \ref{lemma:alpha keeps thin.}. Furthermore, if $(x_1, x_2, x_3)$ is a degenerate triple as in Definition \ref{defn:well behaved letter quasimorphisms}, then also $\gamma_i(x_1,x_2,x_3)$ is a degenerate triple as $\gamma_i$ satisfies $\gamma_i(x^{-1}) = \gamma_i(x)^{-1}$ for all $x \in \Acl$.

Let $w$ be as above and consider the sequence $\bar{\gamma}_i(w) \in \bar{\Acl}_0$ of conjugacy classes in $\bar{\Acl}_0$. By Proposition \ref{prop:alpha on conjugacy classes decreases},
 if $\bar{\gamma}_i(w)$ is a non-trivial equivalence class in the commutator subgroup then $\bar{\gamma}_{i+1}(w)$ either is non-trivial and has strictly smaller word-length or $\bar{\gamma}_{i}(w) = \bar{\gamma}_{i+1}(w)$; see also Remark \ref{rmk:on conjugacy classes for acl}.

Hence, there are the following cases:
\begin{itemize}
\item For all $i \in \N$, $\bar{\gamma}_i(w)$ lies in $\F_2'$, the commutator subgroup. Then, there is an $N$ such that $\bar{\gamma}_{N}(w) = \bar{\gamma}_{N+i}(w)$ for all $i \in \N$. Both $\bar{\alpha}$ and $\bar{\beta}$ then fix the class $\bar{\gamma}_N(w)$.
By Proposition \ref{prop:alpha on conjugacy classes decreases}, $\bar{\gamma}_N(w)$ may be represented by
$[\at, \bt]^k$ for $k \in \Z \backslash \{ 0 \}$.
Hence, the quasimorphism $\eta_0 = \eta_{\at \bt} - \eta_{\bt \at}$ studied in Example \ref{exmp: extemal brooks quasimorphisms on free group} and Proposition \ref{prop:letter thin triples and two quasimorphisms},
satisfies that $|\bar{\eta}_0(\bar{\gamma}_N(w))| \geq 2$. 
Define $\psi \col G \to \Z$ via
\[
\psi(g) := \begin{cases} 
\eta_0 \circ \gamma_N \circ \tilde \Phi(g) & \text{ if } \gamma_N \circ \tilde \Phi(g) \not = e \\
1 & \text{ else}
\end{cases}
\]
and observe that if $\gamma_N \circ \tilde \Phi(g)$ is non-trivial, then $\psi(g^{-1}) = - \psi(g)$.
By multiple applications of Proposition \ref{prop:powers of alpha, beta},  we see that there are some elements 
$d_1, d_2, w' \in \Acl$ such that $\gamma_N \circ \tilde \Phi(g^n) = d_1 w'^{n-K} d_2$ for all $n \geq K$, for $K \leq N+1$ and $[w'] = \bar{\gamma}_N([w])$.
We see that 
\begin{align*}
|\bar{\psi}(g_0)| &= \lim_{n \to \infty} |\psi(g_0^n)|/n \\
&= \lim_{n \to \infty} |\eta_0 \circ \gamma_N \circ \tilde \Phi(g_0^n)| / n \\
&= \lim_{n \to \infty} |\eta_0 (d_1 w'^{n-K} d_2)|/n \\
&= |\bar{\eta_0} (\bar{\gamma}_N([w]))| \geq 2.
\end{align*}

By multiple applications of Lemma \ref{lemma:alpha keeps thin.} and the fact that $\alpha(w^{-1}) = \alpha(w)^{-1}$, $\beta(w^{-1}) = \beta(w)^{-1}$ and $\alpha(e)=e=\beta(e)$  we see that $\gamma_N \circ \tilde \Phi$ is a well-behaved letter-quasimorphism.
Let $g,h \in G$. We wish to compute the defect $| \psi(g) + \psi(h) - \psi(gh) |$. To ease notation define $(x_1,x_2,x_3)$ as the triple
\begin{align*}
(x_1,x_2,x_3) = (\gamma_N \circ \tilde \Phi(g), \gamma_N \circ \tilde \Phi(h), \gamma_N \circ \tilde \Phi(gh)^{-1})
\end{align*}
which is either letter-thin or degenerate as $\gamma_N \circ \tilde \Phi$ is a well-behaved letter-quasimorphism.
If $(x_1,x_2,x_3)$ letter-thin then none of its components $x_i$ are empty. Hence
\begin{equ*}{rcl}
|\psi(g)+\psi(h)-\psi(gh)| &=& |\psi(g)+\psi(h)+\psi(h^{-1} g^{-1})| \\
&=& |\eta_0(x_1)+ \eta_0(x_2) + \eta_0(x_3)| \\
&=& 1
\end{equ*}
by Proposition \ref{prop:letter thin triples and two quasimorphisms}.
Suppose that $(x_1,x_2,x_3)$ is degenerate.
Then one may see that $(x_1,x_2,x_3)$ equals $(v, v^{-1}, e)$, $(v,e, v^{-1})$ or $(e, v, v^{-1})$ for some $v \in \Acl$. Using that $-\eta_0(v) = \eta_0(v^{-1})$ for $e \not = v \in \Acl$ we see that two terms of $\psi(g) + \psi(h) - \psi(gh)$ will cancel and for the other will be $1$. Hence, $|\psi(g) + \psi(h) - \psi(gh)| =1$.
Finally, if $(x_1,x_2,x_3) = (e,e,e)$ then $\psi(g) + \psi(h) - \psi(gh) = 1$.
In particular we see that for any $g,h \in G$, $\psi(g) + \psi(h) - \psi(gh) \in \{ 1, -1 \}$, so $\psi$ is a quasimorphism.
Moreover, by possibly changing the sign of $\psi$ we may assume that $\bar{\psi}(g_0) \geq 2$.

\item Otherwise, let $N \in \N$ be the smallest integer such that $\bar{\gamma}_N(w) \not \in \F_2'$.  Then $\bar{\gamma}_N(w) \in \Acl$ is represented by a non-trivial even word which is not in the commutator. Hence
\[
|\eta_\at(\bar{\gamma}_N(w))| + |\eta_\bt(\bar{\gamma}_N(w))| \geq 2
\]
where $\eta_\at \col \F_2 \to \Z$ (resp. $\eta_\bt \col \F_2 \to \Z$) denotes the homomorphism counting the letter $\at$ (resp. $\bt$).
 Observe that homomorphisms are already homogenised. There is some $\eta = \eta_\xt + \eta_\yt$ where $\xt \in \{ \at, \at^{-1} \}$, $\yt \in \{ \bt, \bt^{-1} \}$ such that
$\eta(\bar{\gamma}_N(w)) \geq 2$. As before, define $\psi \col G \to \Z$ via
\[
\psi(g) := \begin{cases} 
\eta\circ \gamma_N \circ \tilde \Phi(g) & \text{ if } \gamma_N \circ \tilde \Phi(g) \not = e \\
1 & \text{ else}.
\end{cases}
\]
By a similar argument as above we see that $\bar{\psi}(g_0) \geq 2$.
Again, the triple 
\begin{align*}
(x_1,x_2,x_3) = (\gamma_N \circ \tilde \Phi(g), \gamma_N \circ \tilde \Phi(h), \gamma_N \circ \tilde \Phi(h^{-1} g^{-1}))
\end{align*}
is either letter-thin or degenerate.
By the same argument as in the previous case and using Proposition \ref{prop: letter thin triples and homomorphisms} we conclude that for any $g,h \in G$, $|\psi(g) + \psi(h) - \psi(gh)|=1$, so $\psi$ is a quasimorphism.
In particular we see that for any $g,h \in G$, $\psi(g) + \psi(h) - \psi(gh) \in \{ 1, -1 \}$.
\end{itemize}

In both cases, set 
\[
\phi(g) :=  \frac{\psi(g)+1}{2}.
\]
 Then we see that, for any $g,h \in G$,
\[
\delta^1 \phi(g,h) =  \phi(g) + \phi(h) - \phi(gh)  = \frac{\psi(g) + \psi(h) - \psi(gh)+1}{2}
\in \{ 0, 1 \}.
\]
Hence, by Theorem \ref{thm:ghys} due to Ghys (see also \cite{ghys}), there is an action $\rho \col G \to \mathrm{Homeo}^+(S^1)$ on the circle such that $\rho^*\eurm_b = [\delta^1 \phi] \in \mathrm{H}^2_b(G,\Z)$ and hence $\rho^*\eurm^\R_b = [\delta^1 \bar{\phi}] \in \Hrm^2_b(G, \R)$. Here, $\eurm_b$ (resp. $\eurm_b^\R$) denotes the (real) bounded Euler class.
Moreover, we observe that $\bar{\phi}(g) = \bar{\psi}(g)/2$, for $\bar{\phi}$ the homogenisation of $\phi$.
Furthermore, as $D(\psi) = 1$ we estimate by Proposition \ref{prop:defect of homogenisation doubles}
that $D(\bar{\psi}) \leq 2$ and hence $D(\bar{\phi}) \leq 1$.

We conclude that there is a quasimorphism $\phi \col G \to \R$ with homogenisation $\bar{\phi}$ such that
$D(\bar{\phi}) \leq 1$, $\bar{\phi}(g_0) \geq 1$. If $G$ is countable then there is an action $\rho \col G \to \mathrm{Homeo}^+(S^1)$ with $[\delta^1 \phi] = \rho^*\eurm_b^\R \in \mathrm{H}^2_b(G,\R)$ where $\eurm_b^\R$ is the real bounded Euler class.
\end{proof}

Applying Theorem \ref{thm:main} to Example \ref{exmp:letter quasimorphisms on free group} we recover that in every residually free group $G$, every non-trivial element $g \in G'$ has stable commutator length  at least $1/2$. This gap is realised by a quasimorphism induced by a circle action which has not been known previously.

As said in the introduction we think of letter-quasimorphisms as simplifications of elements. 
Sometimes information about $w$ can not be recovered by $\Phi(w)$. 
For example for the word $w = \at \bt \at^{-1} \bt^{-1} \at \bt^{-3} \at^{-1} \bt^3$, we may compute\footnote{These calculations are done with \texttt{scallop}, see \cite{scallop}}
$\scl(w) = 3/4$ but $\scl(\Phi(w)) = 1/2$.
This example may be generalised: Pick an alternating word $w \in \Acl$ that starts and ends in a power of $\bt$.
Then $[\at, w] \in \Acl$ and $\scl([\at, w]) = 1/2$.
Then for any choice of words $v_1, v_2 \in \F_2$ such that $\Phi(v_1) = w$, $\Phi(v_2) = w^{-1}$ and such that $v = \at v_1 \at^{-1} v_2 \in \F_2'$ we have that $\Phi(v) = [\at, w]$. However, $\scl(v)$ is experimentally arbitrarily large.

\begin{rmk} \label{rmk:quasimorphisms are pullback of hs qm}
As pointed out in the proof all of $\gamma_i \circ \tilde \Phi$ are well-behaved letter-quasimorphisms for any $i \in \N$.
The quasimorphisms $\psi$ defined in the proof are then pullbacks of the quasimorphism $\eta_0 = \eta_{\at \bt} - \eta_{\bt \at}$ or homomorphisms $\eta = \eta_\xt + \eta_\yt$ via these well-behaved letter-quasimorphisms $\gamma_i \circ \tilde{\Phi} \col G \to \Acl \subset \F_2$.
\end{rmk}

\begin{rmk} \label{rmk:criterion for gaps}
In light of Theorem \ref{thm:Bavards duality}, a criterion for groups to have the optimal $\scl$-gap of $1/2$ may hence be as follows:

\begin{center}
\emph{Let $G$ be a non-abelian group. If for every non-trivial element $g \in G'$ there is a letter-quasimorphism $\Phi \col G \to \Acl$ such that $\Phi(g^n) = \Phi(g)^n$ where $\Phi(g)$ is non-trivial. Then $G$ has a gap of $1/2$ in stable commutator length.}
\end{center}

By Example \ref{exmp:letter quasimorphisms on free group} residually free groups have this property and the criterion has some 
qualitative similarities to being residually free.
We will later see that also non-residually free groups, like right-angled Artin groups, have this property; see Section \ref{sec:RAAGs and scl}.
\end{rmk}

\section{Left Orders and Left-Relatively Convex Subgroups} \label{sec:Left orders and convex subgroups}

For what follows we will use the notation and conventions of \cite{convexsub}. We further emphasise that nothing in this section is original work.

An order $\prec$ on a set $\mathcal{X}$ is a subset of  $\mathcal{X} \times \mathcal{X}$ where we 
stress that a pair $(x,y) \in \mathcal{X} \times \mathcal{X}$ is in this subset by writing $x \prec y$. Furthermore, the following holds:
\begin{itemize}
\item For all $x, y \in \mathcal{X}$ either $x \prec y$ or $y \prec x$. We have $x \prec y$ and $y \prec x$ if and only if $x = y$.
\item For all $x, y, z \in \mathcal{X}$ such that $x \prec y$ and $y \prec z$ we have $x \prec z$.
\end{itemize}

A set $\mathcal{X}$ with a left group action has a \emph{$G$-invariant order} if for all $g \in G$, $x_1,x_2 \in \mathcal{X}$, 
$x_1 \prec x_2$ implies that $g.x_1 \prec g.x_2$. 
A group $G$ is said to be \emph{left orderable} if the set $G$ has a $G$-invariant order with respect to its left action on itself. A subgroup $H < G$ is said to be \emph{left relatively convex} in $G$ if the $G$-set $G/H$ has some $G$-invariant order. Note that this definition is valid even if $G$ itself is \emph{not} left-orderable.
If $G$ itself is orderable, then this is equivalent to the following: There is an order $\prec$ on $G$ such that for every $h_1, h_2 \in H$ and $g \in G$ with $h_1 \prec g \prec h_2$ we may conclude $g \in H$. In this case we simply say that $H$ is convex in $G$. As $e \in H$, this means that $H$ is a neighbourhood of $e$.
It is not hard to see that left relatively convex is transitive:
\begin{prop} \label{prop:left-relatively convex is transitive} \footnote{See Section 2 of \cite{convexsub}}
Let $K < H < G$ be groups. Then $G/K$ is $G$-orderable such that $H/K$ is convex if and only if $G/H$ is $G$-orderable and $H/K$ is $H$-orderable. 
\end{prop}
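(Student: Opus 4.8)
The statement is the transitivity of relative convexity, and both implications admit explicit proofs. In the forward direction I would restrict and then quotient the given order on $G/K$; in the converse direction I would assemble a lexicographic order on $G/K$ out of the orders on $G/H$ and on $H/K$. Throughout I will use that a $G$-invariant order $\prec$ on $G/K$ makes left multiplication by any $f\in G$ an order-automorphism of $(G/K,\prec)$, since $x\prec y$ implies $f.x\prec f.y$ and applying this to $f^{-1}$ gives the converse.

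\emph{Forward direction.} Suppose $\prec$ is a $G$-invariant order on $G/K$ in which $H/K$ is convex. Restricting $\prec$ to $H/K\subseteq G/K$ and noting that the $H$-action on $H/K$ is the restriction of the $G$-action yields an $H$-invariant order on $H/K$. To order $G/H$, observe that for each coset $gH$ the subset $gH/K=g\cdot(H/K)$ is convex in $(G/K,\prec)$, being the image of the convex set $H/K$ under an order-automorphism; these subsets are pairwise disjoint (two of them meeting forces the corresponding $H$-cosets to coincide) and cover $G/K$. Since two disjoint convex subsets of a totally ordered set are comparable — all of one lies $\prec$-below all of the other — one may define $g_1H\prec' g_2H$ to mean that every element of $g_1H/K$ lies $\prec$-below every element of $g_2H/K$. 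This is a total order on $G/H$, and it is $G$-invariant because left multiplication by $f\in G$ sends the block $gH/K$ onto the block $fgH/K$ in an order-preserving fashion.

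\emph{Converse direction.} Suppose $\prec_1$ is a $G$-invariant order on $G/H$ and $\prec_2$ an $H$-invariant order on $H/K$. Define $\prec$ on $G/K$ by: if $g_1H\neq g_2H$, declare $g_1K\prec g_2K \iff g_1H\prec_1 g_2H$; if $g_1H=g_2H$ (so $g_1^{-1}g_2\in H$), declare $g_1K\prec g_2K \iff K\prec_2 (g_1^{-1}g_2)K$. The second clause is independent of the choice of coset representatives and of which of $g_1,g_2$ is taken as basepoint: both facts follow from the $H$-invariance of $\prec_2$, using in particular that for $u=g_1^{-1}g_2$ one has $K\prec_2 u^{-1}K \iff uK\prec_2 K$. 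Trichotomy then reduces, in the same-block case, to trichotomy for $\prec_2$ applied to $uK$, and transitivity is checked by splitting into cases according to which of the three cosets share an $H$-coset, the all-same-block case again using $H$-invariance of $\prec_2$ to translate. The order is $G$-invariant because $fg_1H=fg_2H \iff g_1H=g_2H$ and $(fg_1)^{-1}(fg_2)=g_1^{-1}g_2$, so each defining clause is preserved by left multiplication. Finally $H/K$ is convex in $(G/K,\prec)$: if $h_1K\prec gK\prec h_2K$ with $h_1,h_2\in H$ but $gH\neq eH$, the first clause forces $eH=h_1H\prec_1 gH\prec_1 h_2H=eH$, a contradiction, so $gK\in H/K$.

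The conceptual content is minimal; the only genuinely fiddly part is the converse, where one must verify that the lexicographic rule is well defined (independent of basepoint and of representatives) and that it is transitive and satisfies trichotomy. I expect to spend the bulk of the write-up on a careful but routine case analysis, every case ultimately reducing to $G$-invariance of $\prec_1$ or $H$-invariance of $\prec_2$; no step should present a real obstacle.
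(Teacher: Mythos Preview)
The paper does not supply a proof of this proposition; it merely records the statement with a footnote pointing to Section~2 of \cite{convexsub} and explicitly notes that nothing in Section~\ref{sec:Left orders and convex subgroups} is original. Your argument is correct and is precisely the standard one (restriction and passage to blocks in the forward direction, lexicographic order in the converse), so there is nothing to compare against.
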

An easy example of a pair $H < G$  such that $H$ is left relatively convex in $G$ is $\Z < \Z^2$ embedded in the second coordinate via the standard lexicographic order. Similarly, every subgroup $G < \Z \times G$ embedded via the second coordinate, is left relatively convex for an arbitrary group $G$.
Every generator of a non-abelian free group generates a left relatively convex subgroup in the total group; see \cite{DH}. In fact, \cite{convexsub} show that each maximal cyclic subgroup of a right-angled Artin group is left relatively convex.

We wish to state the main Theorem of \cite{convexsub}. For this let $\mathrm{T}$ denote an \emph{oriented} simplicial tree, with vertices $\Vrm(\mathrm{T})$ and edges $\Erm(\mathrm{T})$ and two maps $\iota, \tau \col \Erm(\mathrm{T}) \to \Vrm(\mathrm{T})$ assigning to each oriented edge its initial and terminal vertex respectively.
Suppose that $G$ acts on $\mathrm{T}$ and denote by $G_v$ (resp. $G_e$) the stabilisers of a vertex $v \in \mathrm{V}(\Trm)$ (resp. an edge $e \in \Erm(\Trm)$).
Note that stabilisers of an edge $e$ naturally embed into $G_{\iota(e)}$ and $G_{\tau(e)}$.
\begin{thm} \footnote{Theorem 14 of \cite{convexsub}}
Suppose that $\mathrm{T}$ is a left G-tree such that, for each $\mathrm{T}$-edge $e$, $G_e$ is left relatively convex in $G_{\iota(e)}$ and in $G_{\tau(e)}$. Then, for each $v \in \mathrm{V}(\mathrm{T})$, $G_v$ is left relatively convex in $G$.
Moreover, if there exists some $v \in \mathrm{V}(\mathrm{T})$ such that $G_v$ is left orderable, then $G$ is left orderable.
\end{thm}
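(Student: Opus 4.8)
The goal is to produce, for each vertex $v$, a $G$-invariant order on $G/G_v$; since the orbit map $gG_v \mapsto g.v$ identifies $G/G_v$ $G$-equivariantly with the orbit $G.v \subseteq \Vrm(\Trm)$, this amounts to linearly ordering that orbit in a way preserved by $G$. Two preliminary reductions make the problem cleaner. First, passing to the barycentric subdivision $\Trm'$ of $\Trm$ (new vertices $\Vrm(\Trm) \cup \Erm(\Trm)$, new edges the two halves of each $\Trm$-edge) makes $G$ act without inversions: the stabiliser of the midpoint of an edge $e$ is $G_e$ and the stabiliser of a half of $e$ is again $G_e$, so the two endpoint stabilisers of a $\Trm'$-edge are $G_{\iota(e)}$ (or $G_{\tau(e)}$) and $G_e$, and $G_e$ is left relatively convex in each of them by hypothesis (and trivially in itself); thus $\Trm'$ satisfies the same hypotheses, and from now on $G$ acts without inversions on $\Trm$, which then carries a $G$-invariant $2$-colouring of vertices, so any two vertices in one orbit are at even distance. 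Second, for each vertex $u$ the star $\mathrm{St}(u)$ of incident edges splits into $G_u$-orbits, each $G_u$-equivariantly isomorphic to $G_u/G_e$ for an incident edge $e$ (with stabiliser exactly $G_e \leq G_u$); as $G_e$ is left relatively convex in $G_u$, each piece has a $G_u$-invariant order, and choosing, by the axiom of choice, an order on the set of pieces yields a $G_u$-invariant total order $\preceq_u$ on $\mathrm{St}(u)$. Fixing one such choice per $G$-orbit of vertices and transporting by $G$ gives a $G$-equivariant family $(\preceq_u)_u$, i.e. $g$ carries $\preceq_u$ to $\preceq_{g.u}$.

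Given this local data, the order on $G.v$ should be read off from geodesics: for distinct $w_1,w_2 \in G.v$ one looks at $[w_1,w_2]$ and compares, via the $\preceq$'s, the way it leaves the two endpoints and/or passes through interior vertices. The essential difficulty is that there is no $G$-invariant basepoint from which to run a lexicographic comparison, so the rule must be genuinely symmetric in $w_1,w_2$, and the hard part is proving that the symmetric rule one writes down is \emph{transitive}. I would analyse a triple $w_1,w_2,w_3$ through its median vertex $p$ and the tripod spanned by the three geodesics, locate the relevant comparison vertices on the three arms (whose lengths have a common parity because all pairwise distances are even), and check that the local orders at these vertices, all read inside the single tree $\Trm$, fit together. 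A clean variant that isolates part of this is to first order the space of ends $\partial\Trm$: two distinct ends diverge at a unique vertex $p$ along two distinct directions in $\mathrm{St}(p)$, and $\preceq_p$ compares them; transitivity here is immediate from the tripod of three ends (when two of the three divergence directions coincide the two relevant comparisons literally agree). One then descends from this $G$-order on $\partial\Trm$ to one on $G.v$ by placing each vertex of $G.v$ coherently among the ends — it is this placement, rather than transitivity, that becomes delicate.

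An alternative route, which makes the role of Proposition~\ref{prop:left-relatively convex is transitive} explicit, is to argue by Bass--Serre theory. Write $G = \pi_1$ of the graph of groups $\Trm/G$ (vertex groups the $G_u$, edge groups the $G_e$), choose a spanning tree of $\Trm/G$, and build $G$ by successively forming amalgamated free products over the edge groups of the spanning tree and then HNN extensions over the remaining edge groups. At each amalgamation $\hat G = G' \star_{G_e} G''$, the group $G_e$ sits in a vertex group that is, inductively, left relatively convex in $G'$, and $G_e$ is left relatively convex in that vertex group by hypothesis, so $G_e$ — hence, by the base case below, $G'$ and $G''$ — is left relatively convex in $\hat G$; then Proposition~\ref{prop:left-relatively convex is transitive} upgrades left relative convexity of each vertex group in $G'$ (or $G''$) to left relative convexity in $\hat G$. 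This reduces everything to the base cases: if $C$ is left relatively convex in $A$ and in $B$ then $A$ and $B$ are left relatively convex in $A \star_C B$, together with the analogous HNN statement; these are the small cases of the theorem, proved by the explicit order construction above. If $\Trm/G$ is infinite one also needs that left relative convexity passes to directed unions, which can be arranged by a limit argument on the orders, or circumvented by working with $\Trm$ directly throughout. I expect the base cases — equivalently, transitivity of the geodesic order — to be the main obstacle.

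Finally, for the ``moreover'': if $G_v$ is left orderable, fix a left-invariant order $<$ on $G_v$ with positive cone $P$, and a $G$-invariant order $\prec$ on $G/G_v$ as produced above. Order $G$ lexicographically by $g_1 \sqsubset g_2$ iff $g_1 G_v \prec g_2 G_v$, or $g_1 G_v = g_2 G_v$ and $g_1^{-1}g_2 \in P$. This is a total order, and it is left-invariant: on cosets the left $G$-action preserves $\prec$, and if $g_1 G_v = g_2 G_v$ then $(hg_1)^{-1}(hg_2) = g_1^{-1}g_2$, so the tie-breaking term is unchanged. Hence $G$ is left orderable.
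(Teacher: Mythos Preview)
The paper does not prove this theorem at all: it is stated with a footnote attributing it to Theorem~14 of \cite{convexsub} (Antol\'{\i}n--Dicks--\v{S}uni\'{c}) and then used as a black box to derive Corollary~\ref{corr:orders of amalgamations} and Proposition~\ref{prop: convex subgroups of raartin groups}. So there is no ``paper's own proof'' to compare against.

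Your sketch is in the spirit of the argument in \cite{convexsub}: build $G_u$-invariant orders on the stars $\mathrm{St}(u)$ from the relative convexity of edge groups, make the family $G$-equivariant, and use the tree geometry to promote these local orders to a $G$-invariant order on an orbit $G.v$. The barycentric subdivision to kill inversions and the ``moreover'' via the lexicographic order are both correct and standard. However, your write-up is explicitly a sketch with acknowledged gaps at the two places that actually carry the content: you do not specify the symmetric comparison rule on $G.v$ (the natural one is the midpoint rule --- since vertices in one orbit are at even distance after subdivision, compare the two directions at the midpoint via $\preceq_m$), and you do not verify its transitivity. Your Bass--Serre alternative trades this for the amalgam/HNN base cases, which you again leave open, and the directed-union step for infinite $\Trm/G$ is genuinely nontrivial (the local orders on the $G_n/H$ need not be compatible, so one needs a compactness/ultrafilter argument). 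As written, this is a plausible outline rather than a proof; for the purposes of the present paper, citing \cite{convexsub} is what is done and is sufficient.
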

We deduce the following corollary, see Example 19 of \cite{convexsub} using Bass--Serre Theory.
\begin{corr} \label{corr:orders of amalgamations}
Let $A, B$ and $C$ be groups and let $\kappa_A \col C \inj A$ and $\kappa_B \col C \inj B$ be injections and let 
$G = A \star_C B$ be the corresponding amalgamated free product (see Section \ref{sec:amalgamation}).
If $\kappa_A(C)$ is left relatively convex in $A$ and $\kappa_B(C)$ is left relatively convex in $B$, then $A$ and $B$ are left relatively convex in $G$. 
\end{corr}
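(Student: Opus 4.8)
The plan is to feed the Bass--Serre tree of the splitting $G = A \star_C B$ into the preceding theorem (Theorem 14 of \cite{convexsub}). Recall that $G$ acts on a tree $\Trm$ with one orbit of edges and two orbits of vertices: one may choose a fundamental edge $e_0$ with endpoints $v_A = \iota(e_0)$ and $v_B = \tau(e_0)$ such that, identifying $C$ with its images $\kappa_A(C) < A$ and $\kappa_B(C) < B$, we have $G_{v_A} = A$, $G_{v_B} = B$, $G_{e_0} = C$, and the two natural inclusions $G_{e_0} \inj G_{v_A}$, $G_{e_0} \inj G_{v_B}$ are precisely $\kappa_A$ and $\kappa_B$. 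Every other vertex, resp.\ edge, of $\Trm$ has the form $g.v_A$, $g.v_B$, resp.\ $g.e_0$, for some $g \in G$, with stabiliser $gAg^{-1}$, $gBg^{-1}$, resp.\ $gCg^{-1}$.

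The first step is to verify the hypothesis of that theorem: for every edge $e$ of $\Trm$, $G_e$ is left relatively convex in $G_{\iota(e)}$ and in $G_{\tau(e)}$. The key observation is that left relative convexity is invariant under group isomorphism: if $H < K$ is left relatively convex and $\theta \col K \to K'$ is an isomorphism, then a $K$-invariant order on $K/H$ transports via $\theta$ to a $K'$-invariant order on $K'/\theta(H)$, so $\theta(H) < K'$ is left relatively convex. Since each edge $e = g.e_0$ satisfies $(G_e < G_{\iota(e)}) = g\,(C < A)\,g^{-1}$ or $g\,(C < B)\,g^{-1}$ (depending on orientation), conjugation by $g$ being an isomorphism, the assumed left relative convexity of $\kappa_A(C) < A$ and of $\kappa_B(C) < B$ gives exactly the required statement for every edge of $\Trm$.

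With the hypothesis in hand, Theorem 14 of \cite{convexsub} asserts that $G_v$ is left relatively convex in $G$ for \emph{every} vertex $v$ of $\Trm$. Specialising to $v = v_A$ and $v = v_B$ yields that $A = G_{v_A}$ and $B = G_{v_B}$ are left relatively convex in $G$, which is the claim. There is no genuine obstacle here; the only point requiring care is the bookkeeping of Bass--Serre data — matching the abstract edge and vertex stabilisers of $\Trm$ with the conjugates of $A$, $B$, $C$ and confirming that the structural inclusions are the prescribed maps $\kappa_A$, $\kappa_B$ — together with the (easy but essential) conjugation-invariance of left relative convexity noted above.
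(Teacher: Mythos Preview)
Your proposal is correct and matches the paper's intended argument: the paper does not spell out a proof but simply says the corollary follows from the preceding theorem (Theorem~14 of \cite{convexsub}) ``using Bass--Serre Theory,'' and that is precisely what you do---feed the Bass--Serre tree of $A\star_C B$ into the theorem after noting that the edge-group inclusions are conjugates of $\kappa_A,\kappa_B$ and that left relative convexity is conjugation-invariant.
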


Let $H<G$ be a left relatively convex subgroup and let $\prec$  be a $G$-invariant order of $G/H$. we define the \emph{sign-function} $\sign \col G \to \{ -1, 0, 1 \}$ on representatives $g \in G$ of cosets in $G/H$ via
\[
\textrm{sign}(g) =
\begin{cases} +1 & \text{ if } gH \succ H \\
0 & \text{ if } g \in H \\
-1 & \text{ if } gH \prec H
\end{cases}
\]
\begin{prop} \label{prop:orders well defined}
Let $H < G$ be a left relatively convex subgroup and let $\prec$ be the $G$-invariant order of $G/H$.
Then the sign-function with respect to $\prec$ on elements in $G$ is independent under left or right multiplication by elements of $H$. That is for every $g \in G \smallsetminus H$ and
for every $h \in H$, $\textrm{sign}(h g) = \textrm{sign}(g) = \textrm{sign}(g h)$.
\end{prop}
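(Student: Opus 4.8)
The plan is to treat the two assertions separately, since they rest on two rather different (and in both cases very short) observations about the left $G$-set $G/H$.

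First I would dispose of invariance under right multiplication. Let $g \in G \setminus H$ and $h \in H$. Then, as left cosets, $ghH = g(hH) = gH$, because $hH = H$. Since $\sign$ is by definition a function of the left coset of its argument — it is defined on representatives $g \in G$ of cosets in $G/H$ — it follows at once that $\sign(gh) = \sign(g)$. (This also covers the value $0$: $gh \in H$ if and only if $g \in Hh^{-1} = H$, i.e. if and only if $g \in H$.)

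Next I would handle invariance under left multiplication, which is where the $G$-invariance of the order $\prec$ on $G/H$ is used. Fix $h \in H$ and $g \in G \setminus H$. Note first that $hg \notin H$, since otherwise $g = h^{-1}(hg) \in H$. Now $hH = H$ in $G/H$, and left multiplication by $h$ acts on $G/H$ as an order-preserving bijection. Hence if $gH \succ H$ then $hgH = h.(gH) \succ h.H = H$, so $\sign(hg) = +1 = \sign(g)$; and if $gH \prec H$ then likewise $hgH \prec H$, so $\sign(hg) = -1 = \sign(g)$. Since $g \notin H$ these are the only two possibilities, so $\sign(hg) = \sign(g)$ in all cases.

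I do not expect any real obstacle here; the only point that needs a little care is to keep straight that the order lives on the set of \emph{left} cosets $G/H$. Consequently right multiplication by an element of $H$ does not move a point of $G/H$ at all (so the sign cannot change), whereas left multiplication by an element of $G$ acts by order-preserving permutations of $G/H$, and in particular left multiplication by an element of $H$ fixes the basepoint $H$ — which is exactly what makes the comparison with $H$ invariant.
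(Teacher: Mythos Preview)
Your proof is correct and follows essentially the same approach as the paper: right multiplication preserves the coset, and left multiplication by $h \in H$ uses $G$-invariance of $\prec$ together with $hH = H$. The only cosmetic difference is that the paper argues the left-multiplication case in the reverse direction (from $hgH \succ H$ deduce $gH \succ H$ by multiplying by $h^{-1}$), whereas you go from $gH \succ H$ to $hgH \succ H$; these are of course equivalent.
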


\begin{proof}
Clearly $\sign(gh) = \sign(g)$ as both $g$ and $gh$ define the same coset. 
On the other hand, if $h g H \succ H$ then by left multiplication $g H \succ H$ and similarly
if $h g H \prec H$ then $g H \prec H$, so $\sign(hg) = \sign(g)$.
\end{proof}

\section{Amalgamted Free Products} \label{sec:amalgamation}

Let $A, B, C$ be groups and let $\kappa_A \col C \inj A$, $\kappa_B \col C \inj B$ be injections. The
\emph{amalgamated free product} $G =A \star_C B$ with respect to $\kappa_A$ and $\kappa_B$ it the group
via
\[
G = A \star_C B = A \star B / \langle \langle \kappa_A(c)^{-1} \kappa_B(c) \mid c \in C \rangle \rangle.
\]
It is a well-known fact that the homomorphism $A \to A \star_C B$ (resp. $B \to A \star_C B$) defined by mapping $a \in A$ (resp. $b \in B$) to the corresponding element $a \in G$ (resp. $b \in G$) is \emph{injective} and that $C$ embeds in $G$ via these injections.
See \cite{serre} for a reference.
Every element $g \in G$ with $g \in G \smallsetminus  C $ may be written as a product
\[
g = d_1 \cdots d_k
\]
such that all of $d_i$ are either in $A \smallsetminus \kappa_A(C)$ or in $B \smallsetminus \kappa_B(C)$ 
and alternate between both. Furthermore for any other such expression
\[
g = d'_1 \cdots d'_{k'}
\]
one may deduce that $k'=k$ and that there are elements $c_i \in C$, $i \in \{ 1, \ldots, k-1 \}$ such that
$d'_1 = d_1 c_1$, $d'_i = c_{i-1}^{-1} d_i c_i$ and $d'_k = c_{k-1} d_k$.

For what follows, let $\prec_A$ (resp. $\prec_B)$ be a left order on $A/\kappa_A(C)$ (resp. $B/\kappa_B(C)$) and let $\sign_A$ (resp. $\sign_B$) be its sign on $A$ (resp. $B$). 
We define the map $\Phi \col G \to \Acl$ as follows:
If $g \in C$ set $\Phi(g) = e$.
Else let $g = d_1 \cdots d_k$ be the normal form described above.
Then, set 
\[
\Phi(g) = \prod_{i=1}^k \Phi(d_i)
\]
where we define
 \[
 \Phi(d_i) = \begin{cases}
 \at^{ \textrm{sign}_A(d_i)} & \text{ if } d_i \in A \smallsetminus \kappa_A(C) \\
 \bt^{ \textrm{sign}_B(d_i)} & \text{ if } d_i \in B \smallsetminus \kappa_B(C)
 \end{cases}
 \] 
and we note that $\Phi$ is well defined.
To see this let $d'_1 \cdots d'_k$ be another normal form for $g$ and let $c_i \in C$ for $i \in \{0, \ldots, k+1 \}$ be such that 
$d'_i = c_{i-1}^{-1} d_i c_i$ with $c_0=c_{k+1}=e$.
Then 
\[
\sign(d_i) = \sign(c_{i-1}^{-1} d_i) = \sign(c_{i-1}^{-1} d_i c_i) = \sign(d'_i)
\]
by Proposition \ref{prop:orders well defined} and ``$\sign$'' either ``$\sign_A$'' or ``$\sign_B$''.

We claim that:
\begin{lemma} \label{lemma:amalgamated yields letter-quasimorphism}
Let $G = A \star_C B$ and $\Phi \col G \to \Acl$ be as above. Then $\Phi$ is a letter-quasimorphism.
\end{lemma}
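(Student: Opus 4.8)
The plan is to verify the two defining properties of a letter-quasimorphism (Definition~\ref{defn:letter quasihomomorphism}) for $\Phi$: that it is alternating, and that for every $g,h\in G$ one of the two cases (\ref{defn:letter-qm:thin}) or (\ref{defn:letter-qm:general}) holds. The alternating property is easy: if $g = d_1\cdots d_k$ is in normal form then $g^{-1} = d_k^{-1}\cdots d_1^{-1}$ is in normal form, and $\sign(d_i^{-1}) = -\sign(d_i)$ in each factor since the sign-function is alternating (it comes from a $G$-invariant order on cosets); hence $\Phi(g^{-1}) = \Phi(g)^{-1}$. For elements of $C$ both sides are $e$.

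The substance is the quasimorphism identity, which I would prove by analysing how the normal forms of $g$, $h$ and $gh$ interact — this is the standard ``cancellation in amalgamated products'' bookkeeping. Write $g = d_1\cdots d_k$ and $h = e_1\cdots e_l$ in normal form (if either lies in $C$ the claim is immediate: $\Phi$ takes the value $e$ there, and by Proposition~\ref{prop:orders well defined} multiplying $g$ or $h$ by a $C$-element on the relevant side does not change any sign, so $\Phi(gh)=\Phi(g)$ or $\Phi(h)$ respectively, placing us in case~(\ref{defn:letter-qm:thin})). In general, when forming $gh$ one cancels a maximal matched block: there is some $m\geq 0$ so that $d_k e_1, d_{k-1}e_2,\dots$ successively lie in $C$ for the first $m$ steps, and then either the process stops because $d_{k-m}e_{m+1}\notin C$ (possibly it still lies in the same factor, producing one ``merged'' syllable), or one side is exhausted. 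I would split into the cases: (a) no cancellation and no merging — then $\Phi(gh) = \Phi(g)\Phi(h)$, case~(\ref{defn:letter-qm:thin}); (b) cancellation stops with a merged middle syllable $s := d_{k-m}\,c\,e_{m+1}$ lying in, say, $A\smallsetminus\kappa_A(C)$ (where $c\in C$ is the accumulated product) — here $\Phi(g) = c_1^{-1}\at^{\sign_A(d_{k-m})}$, $\Phi(h) = \at^{\sign_A(e_{m+1})}c_3^{-1}$-type expressions and $\Phi(gh)^{-1}$ involves $\at^{\sign_A(s)}$, and one checks the three signs $\xt_1 = \at^{\sign_A(d_{k-m})}$, $\xt_2 = \at^{\sign_A(e_{m+1})}$, $\xt_3 = \at^{-\sign_A(s)}$ satisfy $\xt_1\xt_2\xt_3 \in \{\at,\at^{-1}\}$; (c) the matched block is the entire shorter word, so one side cancels completely into the other, again landing in case~(\ref{defn:letter-qm:thin}) after a sign check.

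The main obstacle is case~(b): one must show that for $x,y,z\in A$ with $z = x\,c\,y$ for some $c\in\kappa_A(C)$ and all three outside $\kappa_A(C)$, the signs satisfy $\sign_A(x)\sign_A(y) = -1 \Rightarrow$ no constraint, while if $\sign_A(x) = \sign_A(y)$ then necessarily $\sign_A(z) = \sign_A(x)$, i.e. $\at^{\sign_A(x)}\at^{\sign_A(y)}\at^{-\sign_A(z)} \in \{\at,\at^{-1}\}$ in all four sign combinations. When $\sign_A(x)=\sign_A(y)=+1$ this is exactly convexity of $\kappa_A(C)$: $H \prec xH$ and (applying $x$) $xH \prec xcyH = zH$ forces $zH \succ H$ since an element between two $H$-cosets... — more precisely, from $\sign_A(x)=+1$ we get $\kappa_A(C)\prec x\kappa_A(C)$, and since $\sign_A(y)=+1$ and the order is $G$-invariant, $x\kappa_A(C) = xc\kappa_A(C) \prec xcy\kappa_A(C) = z\kappa_A(C)$, so $\sign_A(z)=+1$; the $(-1,-1)$ case is symmetric (or follows by applying the alternating property), and the mixed cases $(+1,-1),(-1,+1)$ put no constraint on $\sign_A(z)$, which is consistent with $\xt_1\xt_2 = e$ already forcing $\xt_3 = \at^{\pm1}$ freely. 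Assembling these sign computations, together with the observation from Proposition~\ref{prop:orders well defined} that the $c_i\in C$ appearing between syllables never affect signs (so the words $c_1,c_2,c_3\in\Acl$ required by Definition~\ref{defn:letter quasihomomorphism}(\ref{defn:letter-qm:general}) are exactly the images under $\Phi$ of the surviving prefixes/suffixes), completes the verification.
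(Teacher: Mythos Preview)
Your argument is correct. The core of both proofs is the same observation: for three elements $x,y,z$ in one factor, all outside $C$, with $xyz \in C$, the signs cannot all agree, because $\sign(x)=\sign(y)=+1$ together with left-invariance and transitivity of $\prec$ forces $\sign(xy)=+1$ and hence $\sign(z)=-1$. Where you differ is in packaging. The paper recasts $\Phi$ via the Bass--Serre tree: it defines a labelling $\Xi$ of reduced edge-paths, identifies $\Phi(g)$ with $\Xi$ of the path from the base edge to its $g$-translate, and then reads off the two cases geometrically---either the three relevant edge-midpoints lie on a common geodesic (case~(1)), or they form a tripod whose central vertex produces exactly the three signs above (case~(2)). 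You instead track normal-form cancellation directly, splitting into ``no interaction / complete absorption'' versus ``merge at a middle syllable''. Your route is more elementary and self-contained; the paper's tree formulation is what makes the extension to HNN-extensions and general graphs of groups (alluded to in Remark~\ref{rmk:chen-heuer}) transparent, since the tripod-sign computation is done once at the level of a $G$-tree rather than re-derived from the combinatorics of each particular normal form.
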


We will prove this by giving another description of $\Phi$ in terms of paths in the Bass--Serre tree associated to the amalgamated free product $G = A \star_C B$:

Let $\mathrm{T}$ be the tree with vertex set 
$\Vrm (\Trm) = \{ g A \mid g \in G \} \sqcup \{ g B \mid g \in G \}$ and oriented edges
\[
\Erm(\Trm) = \{ (g A, g B) \mid g \in G \}  \sqcup \{ (g B, g A) \mid g \in G \} \subset \Vrm(\Trm) \times \Vrm(\Trm)
\]
We define $\iota, \tau \col \Erm(\Trm) \to \Vrm(\Trm)$ via $\iota((g A, g B)) = g A$, $\tau((g A, g B))= g B$ and similarly, $\iota(g B, g A) = g B$, $\tau(g B, g A)= g A$. Moreover, we set $(g A, g B)^{-1} = (g B, g A)$
and $(g B, g A)^{-1} = (g A, g B)$.
It is well-known that $\Trm$ is indeed a connected tree.

$G$ acts on $\Trm$ by left multiplication. We have that $\mathrm{Stab}_G(g A) = g A g^{-1} < G$, respectively 
$\mathrm{Stab}_G(h B) = h B h^{-1} < G$, $\mathrm{Stab}_G(g A, g B) = g C g^{-1}$ and $\mathrm{Stab}_G(g B, g A) = g C g^{-1}$

A \emph{reduced path of edges} is a sequence $\wp = (e_1, \ldots e_n)$, $e_i \in \Erm(\Trm)$ such that $\tau(e_i)=\iota(e_{i+1})$ for every $i \in \{ 1, \ldots, n-1 \}$, without backtracking. We call $n$ the \emph{length of the path}.
For what follows, $\mathcal{P}$ will be the set of all paths of edges.

We define the following map $\Xi \col \mathcal{P} \to \Acl$ assigning an alternating word to each path of edges.
Let $\wp \in \mathcal{P}$.
If $\wp$ has length $1$, then set $\Xi(\wp) :=e$.
Else, suppose that $\wp$ has length $2$, i.e. $\wp = (e_1,e_2)$.
Suppose that $e_1 = (g_1 A, g_1 B)$ and $e_2 = (g_2 B, g_2 A)$ and note that $g_1 B = g_2 B$. In particular, $g_1^{-1} g_2 \in B$.
Set $\Xi(\wp)=\Xi((e_1,e_2)) = \bt^{\sign_B(g_1^{-1} g_2)}$.
Similarly, if $e_1 = (g_1 B, g_1 A)$ and $e_2 = (g_2 A, g_2 B)$ note that $g_1 A = g_2 A$ and set $\Xi(\wp) = \Xi((e_1,e_2)) = \at^{\sign_A(g_1^{-1} g_2)}$.
Finally, for an arbitrary paths $\wp = (e_1, \ldots, e_n)$ set $\Xi(\wp) = \Xi(e_1,e_2) \cdot \Xi(e_2, e_3) \cdots \Xi(e_{n-2}, e_{n-1})\cdot \Xi(e_{n-1}, e_n)$.
Note that $\Xi$ is well defined. To see this, note that the stabilizer of any edge $(g A, g B)$ (resp. $(g B, g A)$) is $g C g^{-1}$. Hence, if $(g A, g B) = (g' A, g' B)$ (resp. $(g B, g A) = (g' B, g' A)$) there is a $c \in C$ such that $g c = g'$. If $(e_1,e_2)$ is a path of edges such that without loss of generality $e_1 = (g_1 A, g_1 B)= (g'_1 A, g'_1 B)$ and $e_2 = (g_2 A, g_2 B)=(g'_2 A, g'_2 B)$ then there are $c_1,c_2$ such that $g_1 = g'_1 c_1$ and $g_2 = g'_2 c_2$. Hence 
\[
\sign_B(g_1^{-1} g_2) = \sign_B(c_1^{-1} {g'_1}^{-1} g'_2 c_2) = \sign_B({g'_1}^{-1} g'_2)
\]
by Proposition \ref{prop:orders well defined}.
Define the \emph{inverse of a path} $\wp = (e_1, \ldots, e_n)$ as $\wp^{-1} := (e_n^{-1}, \ldots, e_1^{-1} )$. 
We see that $\Xi(\wp^{-1}) = \Xi(\wp)^{-1}$ using that $\sign(g^{-1}) = - \sign(g)$. 
We collect some further properties of $\Xi$.
We note that if $\wp \in \mathcal{P}$ is a path then so is $^g \wp$, where $^g \wp$ denotes the image of $\wp$ under the action of $g \in G$.

\begin{prop} \label{prop:properties of xi}
$\Xi \col \mathcal{P} \to \Acl$ has the following properties:
\begin{itemize}
\item[(i)] For any $\wp \in \mathcal{P}$ and $g \in G$ we have $\Xi(^g \wp) = \Xi(\wp)$.
\item[(ii)] Let $\wp_1, \wp_2$ be two paths of edges such that the last edge in $\wp_1$ is  $e_1$, the first edge of $\wp_2$ is $e_2$ such that $\tau(e_1)=\iota(e_2)$ and such that $e_1 \not = e_2^{-1}$.
Then $\Xi(\wp_1 \cdot \wp_2) = \Xi(\wp_1) \Xi(e_1, e_2) \Xi(\wp_2)$ as reduced words, where $\wp_1 \cdot \wp_2$ denotes the concatenation of paths.
\item[(iii)] Let $g \in G$ and let $\wp(g)$ be the unique path of edges from one of edges $\{ (A,B), (B, A) \}$ to one of the edges $\{ (g A, g B), (g B, g A) \}$. Then $\Xi(\wp(g)) = \Phi(g)$, for $\Phi$ as above. 
\end{itemize}
\end{prop}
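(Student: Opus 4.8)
The plan is to verify items (i), (ii), (iii) in turn, the first two by unwinding the definition of $\Xi$ and the last by induction on the syllable length of the normal form. For (i), left translation by $g$ sends an edge $(g_1 A, g_1 B)$ to $(g g_1 A, g g_1 B)$ and similarly for the reversed orientation, so it multiplies on the left every coset representative occurring along $\wp$ by $g$. Since $\Xi$ is assembled from the quantities $\sign_A(g_i^{-1}g_j)$ and $\sign_B(g_i^{-1}g_j)$ attached to consecutive pairs of edges, and $(g g_i)^{-1}(g g_j) = g_i^{-1} g_j$, each of these — hence the whole product — is left unchanged, proving $\Xi({}^g\wp) = \Xi(\wp)$.

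For (ii), the hypotheses $\tau(e_1) = \iota(e_2)$ and $e_1 \neq e_2^{-1}$ are precisely what is needed for the concatenation $\wp_1 \cdot \wp_2$ to again be a reduced path of edges, and by definition $\Xi(\wp_1 \cdot \wp_2)$ is the product of the one-letter contributions over all consecutive pairs of edges; those pairs are the internal pairs of $\wp_1$, the junction pair $(e_1, e_2)$, and the internal pairs of $\wp_2$, which yields $\Xi(\wp_1)\,\Xi(e_1,e_2)\,\Xi(\wp_2)$. To see that this word is already reduced I would use that $\Trm$ is bipartite: along a reduced path the vertices alternate between $A$-cosets and $B$-cosets, so consecutive letters of $\Xi$ alternate between powers of $\at$ and powers of $\bt$, ruling out cancellation at the junction; and no letter is trivial, because absence of backtracking at a pair $(f, f')$ forces the relevant element $g_i^{-1}g_j$ to lie in one factor but outside $C$, whence its sign is $\pm 1$ by Proposition \ref{prop:orders well defined}.

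For (iii), I would write the normal form $g = d_1 \cdots d_k$ and induct on $k$, proving at the same time the auxiliary statement that $\wp(g)$ begins with the edge $(B, A)$ when $d_1 \in A \smallsetminus \kappa_A(C)$ and with $(A, B)$ when $d_1 \in B \smallsetminus \kappa_B(C)$ (and with either base edge when $k = 0$). If $k = 0$, so $g \in C$, then $gA = A$ and $gB = B$, the two edge-sets coincide, $\wp(g)$ is a single base edge, and $\Xi(\wp(g)) = e = \Phi(g)$. For $k \geq 1$ assume $d_1 \in A \smallsetminus \kappa_A(C)$ (the case $d_1 \in B \smallsetminus \kappa_B(C)$ being symmetric under $\at \leftrightarrow \bt$) and set $g' = d_2 \cdots d_k$, so $\Phi(g) = \at^{\sign_A(d_1)} \Phi(g')$ and, by the inductive hypothesis, $\wp(g')$ begins with $(A, B)$. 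Since $d_1 \in A$ fixes the vertex $A$, translating $\wp(g')$ by $d_1$ yields a reduced path from the edge $\{(A, d_1 B), (d_1 B, A)\}$ to an edge at $g = d_1 g'$ whose first edge is $(A, d_1 B)$; prepending the base edge $(B, A)$ produces a reduced path of edges from a base edge to an edge at $g$, which by uniqueness of reduced paths in the tree $\Trm$ must equal $\wp(g)$. Applying (ii) to $\wp(g) = ((B,A)) \cdot {}^{d_1}\wp(g')$ with junction pair $((B,A),(A,d_1B))$, the first factor contributes $e$, the junction contributes $\Xi((B,A),(A,d_1B)) = \at^{\sign_A(d_1)}$, and $\Xi({}^{d_1}\wp(g')) = \Xi(\wp(g')) = \Phi(g')$ by (i) and the inductive hypothesis, so $\Xi(\wp(g)) = \at^{\sign_A(d_1)}\Phi(g') = \Phi(g)$; the auxiliary claim that $\wp(g)$ starts with $(B,A)$ has also been verified.

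The step that I expect to require the most care is the identification $\wp(g) = ((B,A)) \cdot {}^{d_1}\wp(g')$ in part (iii): this is the familiar dictionary between normal forms in $A \star_C B$ and geodesics in the Bass--Serre tree, but one must be attentive to which orientation of the base edge the path starts from and to the degenerate cases in which a leading or trailing syllable fixes a base vertex, which is exactly why I would thread the auxiliary statement about the initial edge through the induction.
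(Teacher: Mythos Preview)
Your proof is correct and, for parts (i) and (ii), essentially matches the paper's argument: the paper reduces (i) to the observation $(g g_1)^{-1}(g g_2) = g_1^{-1} g_2$ on a single pair of edges, exactly as you do, and declares (ii) ``immediate from the definition'' where you additionally spell out why the resulting word is reduced.

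For (iii) your approach differs slightly in presentation. The paper simply writes down, for a normal form $g = a_1 b_1 \cdots a_k b_k$, the explicit path
\[
\wp(g) = (B, A),\,(a_1 A, a_1 B),\,(a_1 b_1 B, a_1 b_1 A),\,\ldots,\,(g B, g A)
\]
and says that comparing $\Xi(\wp(g))$ with $\Phi(g)$ term by term gives the result. Your induction on syllable length, threading through the auxiliary claim about the initial edge of $\wp(g)$, is a more careful unpacking of the same computation: each inductive step peels off one syllable and one edge, which is exactly what the paper's explicit path encodes all at once. Your version has the advantage of making the base/degenerate cases and the orientation bookkeeping explicit, at the cost of some extra length; the paper's version is terser but leaves those checks to the reader.
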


\begin{proof}
To see $(i)$ note that for any path $(e_1, e_2)$ with $e_1 = (g_1 A, g_1 B)$ and $e_2 = (g_2 B, g_2 A)$ we have
\[
\Xi(e_1, e_2) = \bt^{\sign(g_1^{-1} g_2)} = \bt^{\sign(g_1^{-1} g^{-1} g g_2)} = \Xi(^g(e_1,e_2))
\]
and the same argument holds for paths with $e_1=(g_1 B, g_1 A)$ and $e_2 = (g_2 A, g_2 B)$.
Point $(ii)$ is immediate from the definition.

To see $(iii)$, without loss of generality assume that the normal form of $g$ is $g = a_1 b_1 \cdots a_k b_k$.
Then 
\[
\wp(g) = (B, A),(a_1 A, a_1 B),(a_1 b_1 B, a_1 b_1 A), \ldots, (g B, g A)
\]
and comparing $\Xi(\wp(g))$ with $\Phi(g)$ yields $(iii)$.
\end{proof}

We can now prove Lemma \ref{lemma:amalgamated yields letter-quasimorphism}:
\begin{proof}
Let $g,h \in G$.
First, suppose that the midpoints of 
\begin{align} \label{equ:midpoint}
\{ (A,B), (B,A) \} \text{, } \{ (gA,gB), (gB,gA) \} \text{ and } \{ (ghA,ghB), (ghB,ghA) \}
\end{align}
lie on a common geodesic segment in $\Trm$.
If the midpoint of $\{ (gA,gB), (gB,gA) \}$ lies in the middle of this segment then there are paths $\wp_1$ and $\wp_2$ such that
$\wp(g) = \wp_1 \cdot e$, $^g \wp(h) = e \cdot \wp_2$ and $\wp(gh) = \wp_1 \cdot e \cdot \wp_2$ for $e$ either $(gA, gB)$ or $(gB, gA)$.
We see that in this case
$\Xi(\wp_1 \cdot e) \cdot \Xi(e \cdot \wp_2) = \Xi(\wp_1 \cdot e \cdot \wp_2)$ as reduced words in $\Acl$ and hence
$\Phi(g) \Phi(h) = \Phi(gh)$.
Analogously we see that $\Phi(g) \Phi(h) = \Phi(gh)$ when the midpoint of $\{ (A,B), (B,A) \}$ or
$\{ (ghA,ghB), (ghB,ghA) \}$ lies in the middle of this segment.
Hence in this case $\Phi$, $g,h \in G$ are as in $(1)$ of Definition \ref{defn:letter quasihomomorphism}.

Now suppose that the midpoints in  (\ref{equ:midpoint}) do not lie on a common geodesic segment.
Then there are non-trivial paths $\wp_1, \wp_2, \wp_3 \in \mathcal{P}$ with initial edges $e_1, e_2, e_3$ satisfying
$\iota(e_1)=\iota(e_2)=\iota(e_3)$ and $e_i \not = e_j$ for $i \not = j$ such that
\[
\wp(g) = \wp_1^{-1} \cdot \wp_2 \text{ ,  }  ^g \wp(h) = \wp_2^{-1} \cdot \wp_3 \text{ , and } ^{gh} \wp(((gh)^{-1}) = \wp_3^{-1} \cdot \wp_1.
\]
By Proposition \ref{prop:properties of xi} we infer that
\begin{align*}
\Phi(g) &= c_1^{-1} \Xi(e_1^{-1}, e_2) c_2 \\
\Phi(h) &= c_2^{-1} \Xi(e_2^{-1}, e_3) c_3 \\
\Phi(gh)^{-1} &= c_3^{-1} \Xi(e_3^{-1}, e_1) c_1
\end{align*}
for $c_i = \Xi(p_i)$, $i \in \{1,2,3 \}$. Without loss of generality assume that $e_i = (g_i A, g_i B)$, the case $e_i = (g_i B, g_i A)$ is analogous. Then
\begin{align*}
\Phi(g) &= c_1^{-1} \xt_1 c_2 \\
\Phi(h) &= c_2^{-1} \xt_2 c_3 \\
\Phi(gh)^{-1} &= c_3^{-1} \xt_3 c_1
\end{align*}
\[
\xt_1= \bt^{\sign_B(g_1^{-1} g_2)} \text{, } \xt_2 = \bt^{\sign_B(g_2^{-1} g_3)} \text{, and } \xt_3 = \bt^{\sign_B(g_3^{-1} g_1)}
\]
We claim that $\sign_B(g_1^{-1} g_2) + \sign_B(g_2^{-1} g_3) + \sign_B(g_3^{-1} g_1) \in \{ -1, +1 \}$.
To see this, note that all of the signs are either $\{ +1, -1 \}$ as the edges $e_i$ were assumed to be distinct.
Suppose that 
$\sign_B(g_1^{-1} g_2)= \sign_B(g_2^{-1} g_3)= \sign_B(g_3^{-1} g_1)=1$.

Then $g_1^{-1} g_2 C \succ C$, hence $g_3^{-1} g_2 C = (g_3^{-1} g_1) g_1^{-1} g_2 C \succ g_3^{-1} g_1 C \succ C$, so $\sign_B(g_3^{-1} g_2) = 1$ and hence $\sign_B(g_2^{-1} g_3)=-1$, contradiction. 
Similarly, not all signs can be negative. Hence indeed $\sign_B(g_1^{-1} g_2) + \sign_B(g_2^{-1} g_3) + \sign_B(g_3^{-1} g_1) \in \{ -1, +1 \}$ and so $\xt_1 \xt_2 \xt_3 \in \{ \bt, \bt^{-1} \}$. This shows that $\Phi$ is as in $(2)$ of Definition \ref{defn:letter quasihomomorphism}, hence $\Phi$ is a letter-quasimorphism.
\end{proof}

\begin{thm} \label{thm:amalgamation}
Let $A, B, C$ be groups and $\kappa_A \col C \inj A$, $\kappa_B \col C \inj B$ be injections such that both $\kappa_A(C)$ and $\kappa_B(C)$ are  left relatively convex subgroup of $A$ resp. $B$.
Let $G = A \star_C B$ be the amalgamated free product for this data.
Then for every element $g_0 \in G$ which does not conjugate into $A$ or $B$, there is a homogeneous quasimorphism $\bar{\phi} \col G \to \R$ such that $\bar{\phi}(g_0) \geq 1$, $D(\bar{\phi}) \leq 1$ and $\bar{\phi}$ vanishes on $A$ and $B$.
If $g_0 \in G'$, then $scl(g_0) \geq 1/2$.

If $G$ is countable then there is an action $\rho \col G \to \mathrm{Homeo}^+(S^1)$ such that $[\delta^1 \bar{\phi}]=\rho^*\eurm^\R_b \in \Hrm^2_b(G,\R)$, for $\eurm^\R_b$ the real bounded Euler class.
\end{thm}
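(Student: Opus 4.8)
The plan is to apply Theorem~\ref{thm:main} to a suitable conjugate of $g_0$, using the letter-quasimorphism $\Phi \col G \to \Acl$ built from the sign functions $\sign_A, \sign_B$ in Section~\ref{sec:amalgamation}. These sign functions are available precisely because $\kappa_A(C)$ and $\kappa_B(C)$ are left relatively convex, and $\Phi$ is a letter-quasimorphism by Lemma~\ref{lemma:amalgamated yields letter-quasimorphism}. The only point requiring work before invoking Theorem~\ref{thm:main} is to replace $g_0$ by a conjugate $g$ for which $\Phi(g^n) = \Phi(g)^n$ for all $n$, with $\Phi(g)$ non-trivial.

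First I would pass to a cyclically reduced conjugate. Since $g_0$ does not conjugate into $A$ or $B$, the standard theory of amalgamated free products (see \cite{serre}) shows that $g_0$ is conjugate to an element $g = d_1 \cdots d_{2k}$ with $k \geq 1$ (the cyclically reduced length is at least $2$ since $g_0$ conjugates into no factor), where the $d_i$ alternate between $A \smallsetminus \kappa_A(C)$ and $B \smallsetminus \kappa_B(C)$ and $d_1$, $d_{2k}$ lie in \emph{different} factors; the length is even because the normal form is both alternating and cyclically reduced. As $d_{2k}$ and $d_1$ lie in different factors there is no cancellation at the junctions, so $(d_1 \cdots d_{2k})^n$ is a normal form for $g^n$ for every $n \geq 1$. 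Each $\Phi(d_i)$ is a single non-trivial letter (a power of $\at$ or of $\bt$ according to the factor, non-trivial since $d_i$ is not in the amalgamated subgroup), consecutive ones are powers of different generators, and $\Phi(d_1)$, $\Phi(d_{2k})$ are powers of different generators. Hence $\Phi(g) = \prod_{i=1}^{2k} \Phi(d_i) \in \Acl$ is a non-trivial alternating word of even length, $\Phi(g)^n$ is again reduced and alternating, and by well-definedness of $\Phi$ on normal forms we get $\Phi(g^n) = \Phi(g)^n$ for all $n \geq 1$ (and trivially for $n=0$).

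Now Theorem~\ref{thm:main}, applied to $g$ and $\Phi$, furnishes an explicit homogeneous quasimorphism $\bar\phi \col G \to \R$ with $D(\bar\phi) \leq 1$ and $\bar\phi(g) \geq 1$, together with (when $G$ is countable) an action $\rho \col G \to \Homeorm^+(S^1)$ with $[\delta^1 \bar\phi] = \rho^* \eurm^\R_b \in \Hrm^2_b(G,\R)$. Being a homogeneous quasimorphism, $\bar\phi$ is a class function, so $\bar\phi(g_0) = \bar\phi(g) \geq 1$. For the vanishing on $A$ and $B$: if $a \in A$ then $\Phi(a^n) \in \{ \at, e, \at^{-1} \}$ for all $n$ (as $a^n \in A$), so in the construction of Theorem~\ref{thm:main} the associated well-behaved letter-quasimorphism satisfies $\tilde\Phi(a^n) = \widetilde{\Phi(a^n)} = e$, whence $\gamma_N \circ \tilde\Phi$ vanishes on all powers of $a$ and the quasimorphism $\psi$ appearing in that proof takes the value $1$ there; its homogenisation therefore vanishes on $A$, and since $\bar\phi = \bar\psi/2$ so does $\bar\phi$. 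The same argument applies to $B$. Finally, if $g_0 \in G'$ then Bavard's Duality Theorem~\ref{thm:Bavards duality} gives $\scl(g_0) \geq |\bar\phi(g_0)| / (2 D(\bar\phi)) \geq 1/2$.

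The main obstacle is the reduction to a cyclically reduced representative: Theorem~\ref{thm:main} requires $\Phi(g^n) = \Phi(g)^n$, which typically fails for a non-cyclically-reduced representative because of cancellation between the end of one normal form and the start of the next. One therefore has to use the normal form theory in $A \star_C B$ (equivalently, the geometry of the axis of $g_0$ in the Bass--Serre tree $\Trm$ via Proposition~\ref{prop:properties of xi}) to produce a representative all of whose powers are reduced; once that is done, the remaining steps are immediate applications of Theorem~\ref{thm:main} and Bavard's Duality Theorem.
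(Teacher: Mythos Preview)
Your proof is correct and follows essentially the same route as the paper's: conjugate $g_0$ to a cyclically reduced element $g = d_1\cdots d_{2k}$ whose normal form is periodic, check $\Phi(g^n)=\Phi(g)^n$ with $\Phi(g)$ non-trivial of even length, and invoke Theorem~\ref{thm:main} together with conjugation-invariance of homogeneous quasimorphisms. You actually supply more than the paper does, since the paper's own proof never verifies the vanishing of $\bar\phi$ on $A$ and $B$.

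One small correction in that extra part: the claim that ``the same argument applies to $B$'' is not literally true. For $b\in B$ one has $\Phi(b^n)\in\{\bt,e,\bt^{-1}\}$, but the map $w\mapsto\tilde w$ sends $\bt$ to $\at\bt\at^{-1}$, not to $e$ (only $\{\at,e,\at^{-1}\}$ is sent to $e$). So $\tilde\Phi(b^n)$ need not vanish. The conclusion still holds, however, because $\tilde\Phi(b^n)$ has length at most $3$, hence $\gamma_N\circ\tilde\Phi(b^n)$ has uniformly bounded length and $\psi(b^n)$ is bounded independently of $n$; thus $\bar\psi(b)=0$ and $\bar\phi(b)=0$.
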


\begin{rmk} \label{rmk:chen-heuer}
The methods developed in this paper may be modified to obtain similar gap results for HNN-extensions and graphs of groups, as well gap results for certain one-relator groups. A generalisation of this and direct proofs of these results using both quasimorphisms and surface mappings will appear in the forthcoming preprint \cite{chen_heuer}.
\end{rmk}

The existence of a uniform gap was known before; see \cite{calegari_fujiwara} and Subsection \ref{subsec:spectral gaps of scl}.
\begin{proof} 
Let $g_0 \in G$ be as in the Theorem. Then, if $g_0$ does not conjugate into $A$ or $B$ we may conjugate $g_0$ by an element $g_1 \in G$ such that
\[
g' = g_1 g_0 g_1^{-1} = a_1 b_1 \cdots a_k b_k
\]
for \emph{all of} $a_i \in A \smallsetminus \kappa_A(C)$ and $b_i \in B \smallsetminus \kappa_B(C)$.
It follows that $\Phi(g')=w$ is a non-empty alternating word of even length and that 
$\Phi({g'}^n) = w^n$ for $n \in \N$.
By Theorem \ref{thm:main} there is a homogeneous quasimorphism $\bar{\phi} \col G \to \R$ with
$D(\bar{\phi}) \leq 1$ and $1 \leq \bar{\phi}(g_0) = \bar{\phi(g')}$ using that homogeneous quasimorphisms are invariant under conjugation.
If $G$ is countable then this quasimorphism $\bar{\phi}$ is moreover induced by a circle action
$\rho \col G \to \Homeorm^+(S^1)$.
\end{proof}

\section{Right-Angled Artin Groups} \label{sec:RAAGs and scl}
In this section all graphs will be simplicial, i.e. do not contain multiple edges between two vertices or loops.
Let $\Gamma$ be a finite simplicial graph with vertices  $\Vrm(\Gamma)$ and edges $\Erm(\Gamma)$.
Given a subset $\Lambda \subset \Vrm(\Gamma)$ the \emph{full subgraph on $\Lambda$ in $\Gamma$} is the graph with vertices $\Lambda$ where two elements $v,w \in \Lambda$ are connected by an edge if and only if they are connected in $\Gamma$.

For a vertex $v \in \Gamma$, the \emph{link of $v$} is the full subgraph of the set $\{w \mid (v,w) \in \Erm(\Gamma) \}$ in $\Gamma$ and denoted by 
$\textrm{Lk}(v)$. The \emph{closed star} is the full subgraph of $\textrm{Lk}(v) \cup \{ v \}$ in $\Gamma$ and denoted by $\textrm{St}(v)$. 
The \emph{right-angled Artin group} or \emph{RAAG} on $\Gamma$ is the group $\Arm(\Gamma)$ with group presentation
\[
\Arm(\Gamma) = \langle \Vrm(\Gamma) \mid [v, w]; (v,w) \in \Erm(\Gamma) \rangle
\]
A word $w$ in the generators $\Vrm(\Gamma)$ representing an element $[w] \in \Arm(\Gamma)$ is called \emph{reduced} if it has minimal word length among all words representing $[w]$. A word $w$ is said to be cyclically reduced if it has minimal word length among all of its conjugates. 
The \emph{support} of an element $g \in \Arm(\Gamma)$ is the set of vertices that appear in a reduced word representing $g$. It is well-known that the support is well-defined.

Let $\Gamma$ be a finite simplicial graph, let $\Arm(\Gamma)$ be the right-angled Artin group of $\Gamma$ and let $v  \in \Gamma$. Then $\Arm(\Gamma)$ can be thought of as an amalgamated free product of $\Arm(\textrm{St}(v))$ and $A(\Gamma \backslash \{ v \} )$ where the common subgroup is $\Arm(\textrm{Lk}(v))$. i.e.
\[
\Arm(\Gamma) = \Arm(\textrm{St}(v)) \star_{\Arm(\textrm{Lk}(v))} \Arm(\Gamma \backslash \{v \}).
\]
This will be used both in the proof of Theorem \ref{thm:raags and scl} and for induction arguments. 
\begin{prop} \label{prop: convex subgroups of raartin groups}
(Section 4 of \cite{convexsub}) Let $\Lambda \subset \Gamma$ be a full subgraph of $\Gamma$. Then $\Arm(\Lambda) < \Arm(\Gamma)$ induced by the embedding, is a left relatively convex subgroup. 
\end{prop}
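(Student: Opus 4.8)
The plan is to prove this by induction on the number of vertices $n = |\Vrm(\Gamma)|$, the inductive hypothesis being the full statement for all finite simplicial graphs with strictly fewer than $n$ vertices (and all of their full subgraphs). The case $n = 0$ is vacuous, since then $\Arm(\Lambda) = \Arm(\Gamma)$ is trivial and the coset space is a point. For the inductive step I fix $\Gamma$ with $n \geq 1$ vertices and a full subgraph $\Lambda \subseteq \Gamma$; if $\Lambda = \Gamma$ there is nothing to prove, so I assume $\Lambda \subsetneq \Gamma$ and choose a vertex $v \in \Vrm(\Gamma) \smallsetminus \Vrm(\Lambda)$. Then $\Lambda$ is still a full subgraph of $\Gamma \smallsetminus \{v\}$ (deleting $v$ affects no edge among vertices of $\Lambda$), and $\Gamma \smallsetminus \{v\}$ has $n-1$ vertices, so by the inductive hypothesis $\Arm(\Lambda)$ is left relatively convex in $\Arm(\Gamma \smallsetminus \{v\})$. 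By transitivity of left relative convexity (Proposition~\ref{prop:left-relatively convex is transitive}), it then suffices to show that $\Arm(\Gamma \smallsetminus \{v\})$ is left relatively convex in $\Arm(\Gamma)$.

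I would handle this in two cases. If $\textrm{St}(v) = \Gamma$, then $v$ is adjacent to all other vertices, hence central, and $\Arm(\Gamma) \cong \Arm(\Gamma \smallsetminus \{v\}) \times \Z$ with $\Arm(\Gamma \smallsetminus \{v\})$ a direct factor; a direct factor of a direct product is left relatively convex, exactly as in the example in Section~\ref{sec:Left orders and convex subgroups}. Otherwise $\textrm{St}(v) \subsetneq \Gamma$, so both $\textrm{St}(v)$ and $\Gamma \smallsetminus \{v\}$ have at most $n-1$ vertices, and I may use the amalgamated free product decomposition
\[
\Arm(\Gamma) = \Arm(\textrm{St}(v)) \star_{\Arm(\textrm{Lk}(v))} \Arm(\Gamma \smallsetminus \{v\})
\]
recalled before the statement. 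Here $\textrm{Lk}(v)$ is a full subgraph of both $\textrm{St}(v)$ and $\Gamma \smallsetminus \{v\}$, so by the inductive hypothesis $\Arm(\textrm{Lk}(v))$ is left relatively convex in $\Arm(\textrm{St}(v))$ and in $\Arm(\Gamma \smallsetminus \{v\})$. Corollary~\ref{corr:orders of amalgamations} then gives that $\Arm(\textrm{St}(v))$ and $\Arm(\Gamma \smallsetminus \{v\})$ are both left relatively convex in $\Arm(\Gamma)$; in particular $\Arm(\Gamma \smallsetminus \{v\})$ is, which closes the induction.

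The genuinely substantial input — passing from left relative convexity of edge groups to that of vertex groups in a graph of groups — is already provided by Corollary~\ref{corr:orders of amalgamations} (equivalently Theorem~14 of \cite{convexsub}), so no new analysis is needed. The points requiring care are organisational: choosing the correct induction parameter (number of vertices of the ambient graph, uniformly over all full subgraphs, not over $\Lambda$), checking that $\Lambda$ and $\textrm{Lk}(v)$ really are full subgraphs of the smaller graphs where the hypothesis is invoked, and isolating the degenerate case $\textrm{St}(v) = \Gamma$ in which the displayed amalgamation is trivial (one factor equals $\Arm(\Gamma)$) so that Corollary~\ref{corr:orders of amalgamations} is useless and one must instead invoke the direct-product structure. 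I expect this degenerate case to be the only place where a careless argument would become circular, so it is the step I would write out in full detail.
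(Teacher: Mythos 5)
Your proof is correct and follows essentially the same route as the paper: induction on $|\Vrm(\Gamma)|$ over all full subgraphs simultaneously, reduction via transitivity to showing $\Arm(\Gamma\smallsetminus\{v\})$ is left relatively convex in $\Arm(\Gamma)$, and the amalgamated decomposition over $\Arm(\mathrm{Lk}(v))$ fed into Corollary~\ref{corr:orders of amalgamations}. The only difference is that the paper sidesteps your case split on $\mathrm{St}(v)=\Gamma$ entirely: it obtains left relative convexity of $\Arm(\mathrm{Lk}(v))$ in $\Arm(\mathrm{St}(v))$ not from the inductive hypothesis but from the direct-product decomposition $\Arm(\mathrm{St}(v)) = \langle v\rangle \times \Arm(\mathrm{Lk}(v))$ (the $G < \Z\times G$ example of Section~\ref{sec:Left orders and convex subgroups}), which is valid whether or not $\mathrm{St}(v)$ is a proper subgraph, so the degenerate case you rightly flag as the potential source of circularity never needs separate treatment.
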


\begin{proof}
We follow the proof of \cite{convexsub}.
We may induct on the following statement: For any $\Gamma$ of size at most $k$ and every full subgraph $\Lambda \subset \Gamma$, $\Arm(\Lambda)$ is left relatively convex in $\Arm(\Gamma)$. For $k=2$ this is just the case of free-abelian and non-abelian free groups mentioned before. 
Assume the statement is true for all $n \leq k$. Let $\Gamma$ be a graph with $k+1$ vertices and let $\Lambda \subset \Gamma$ be a full subgraph. If $\Lambda = \Gamma$ there is nothing to show. Else pick $v \in \Vrm(\Gamma) \backslash \Vrm(\Lambda)$ and set $\Gamma'$ to be the full subgraph in $\Gamma$ on the vertices $\Vrm(\Gamma) \backslash \{ v \}$.
Hence $\Lambda \subset \Gamma' \subset \Gamma$ with $\Gamma'$ of size $k$.
We wish to show that $\Arm(\Gamma') < \Arm(\Gamma)$ is a left-relatively convex subgroup.
Consider the amalgamation
\[
\Arm(\Gamma) = \Arm(\textrm{St}(v)) \star_{\Arm(\textrm{Lk}(v))} \Arm(\Gamma')
\]
By induction, $\Arm(\textrm{Lk}(v)) < \Arm(\Gamma')$ is a left relatively convex subgroup.
Also $\Arm(\textrm{Lk}(v)) < \Arm(\textrm{St}(v))$ is a left relatively convex subgroup
as $\Arm(\textrm{St}(v)) = \langle v \rangle \times \Arm(\textrm{Lk}(v))$. 

We may use Corollary \ref{corr:orders of amalgamations} to see that $\Arm(\Gamma') < \Arm(\Gamma)$ is a left relatively convex subgroup. By induction hypothesis, $\Arm(\Lambda) < \Arm(\Gamma')$ is a left-relatively convex subgroup and by transitivity $\Arm(\Lambda) < \Arm(\Gamma')$ is a left relatively convex subgroup. 
\end{proof}

We deduce:
\begin{thm} \label{thm:quasimorphisms on raags}
Let $g \in \Arm(\Gamma)$ be an element in an right-angled Artin group $\Arm(\Gamma)$ such that $g_0$ does not conjugate into a subgroup of a clique of $\Gamma$. Then there
is a homogeneous quasimorphism $\bar{\phi}$ which vanishes on the generators $\Vrm(\Gamma)$ such that $\bar{\phi}(g_0) \geq 1$ and $D(\bar{\phi}) \leq 1$. 

Moreover, there is an action $\rho \col \Arm(\Gamma) \to \mathrm{Homeo}^+(S^1)$ such that $[\delta^1 \bar{\phi}]=\rho^*\eurm^\R_b \in \Hrm^2_b(G,\R)$, for $\eurm^\R_b$ the real bounded Euler class.
\end{thm}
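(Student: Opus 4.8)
The plan is to prove Theorem \ref{thm:quasimorphisms on raags} by induction on the number of vertices $n=|\Vrm(\Gamma)|$: whenever possible I would reduce to Theorem \ref{thm:amalgamation} via the splitting $\Arm(\Gamma)=\Arm(\textrm{St}(v))\star_{\Arm(\textrm{Lk}(v))}\Arm(\Gamma\smallsetminus\{v\})$, and otherwise peel off a central generator. Fix $g_0$ as in the statement and let $S\subseteq\Vrm(\Gamma)$ be the support of a cyclically reduced representative of the conjugacy class of $g_0$; this is a well-defined conjugacy invariant, and by the standard description of parabolic subgroups of RAAGs (special subgroups are isometrically embedded) one has, for every full subgraph $\Lambda\subseteq\Gamma$, that a conjugate of $g_0$ lies in $\Arm(\Lambda)$ if and only if $S\subseteq\Lambda$. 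In particular the hypothesis on $g_0$ translates into: $S$ is \emph{not} a clique of $\Gamma$.

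I would then split into two (exhaustive) cases according to whether there is a non--cone vertex $v\in S$ (i.e.\ $v$ not adjacent to all other vertices) with $S\not\subseteq\textrm{St}(v)$. If such a $v$ exists, then $g_0$ conjugates neither into $\Arm(\textrm{St}(v))$ (as $S\not\subseteq\textrm{St}(v)$) nor into $\Arm(\Gamma\smallsetminus\{v\})$ (as $v\in S$); since $v$ is not a cone vertex, $\textrm{Lk}(v)$ is a proper full subgraph of both $\textrm{St}(v)$ and $\Gamma\smallsetminus\{v\}$, so the displayed splitting is a genuine amalgamated free product and, by Proposition \ref{prop: convex subgroups of raartin groups}, $\Arm(\textrm{Lk}(v))$ is left relatively convex in each factor. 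Theorem \ref{thm:amalgamation} then produces a homogeneous quasimorphism $\bar\phi\col\Arm(\Gamma)\to\R$ with $\bar\phi(g_0)\geq1$, $D(\bar\phi)\leq1$ which vanishes on $\Arm(\textrm{St}(v))$ and $\Arm(\Gamma\smallsetminus\{v\})$, hence on every generator of $\Arm(\Gamma)$ (each generator lies in one of these two factors); as $\Arm(\Gamma)$ is countable this comes with the required circle action.

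In the remaining case every non--cone vertex $v\in S$ satisfies $S\subseteq\textrm{St}(v)$; I would first observe that $S$ must then contain a cone vertex $v_0$ of $\Gamma$, for otherwise $S\subseteq\textrm{St}(v)$ for every $v\in S$, which forces all vertices of $S$ to be pairwise adjacent and contradicts $S$ not being a clique. Since $v_0$ is central, $\Arm(\Gamma)=\langle v_0\rangle\times\Arm(\Gamma\smallsetminus\{v_0\})$; writing the cyclically reduced representative of $g_0$ as $v_0^{m}h$ with $h\in\Arm(\Gamma\smallsetminus\{v_0\})$ (so $m\neq 0$ as $v_0\in S$), one checks that $h$ is cyclically reduced in $\Arm(\Gamma\smallsetminus\{v_0\})$ with support $S\smallsetminus\{v_0\}$, which is again not a clique (adjoining the cone vertex $v_0$ to a clique keeps it a clique), and that $h$ does not conjugate into a clique subgroup of $\Gamma\smallsetminus\{v_0\}$ (centrality of $v_0$ turns such a conjugacy into one for $g_0$). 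The inductive hypothesis applied to $\Arm(\Gamma\smallsetminus\{v_0\})$ and $h$ gives $\bar\psi$ together with an action $\rho_\psi$ with the stated properties, and pulling back along the projection $\pi\col\Arm(\Gamma)\to\Arm(\Gamma\smallsetminus\{v_0\})$ killing $v_0$ yields $\bar\phi:=\bar\psi\circ\pi$ with $D(\bar\phi)\leq D(\bar\psi)\leq1$, $\bar\phi(g_0)=\bar\psi(h)\geq1$, $\bar\phi$ vanishing on all generators, and $[\delta^1\bar\phi]=\pi^*[\delta^1\bar\psi]=(\rho_\psi\circ\pi)^*\eurm^\R_b$. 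The base cases $n\leq1$ are vacuous since then $S$ is automatically a clique.

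The main obstacle, and the point needing the most care, is the combinatorial dichotomy above: showing that whenever no vertex of $S$ is usable for the amalgamation argument there is a central generator to peel off. This rests on the (standard but essential) facts that the support of the cyclically reduced form is a conjugacy invariant and that ``conjugating into $\Arm(\Lambda)$'' is detected by the inclusion $S\subseteq\Lambda$, together with the elementary observation that a set of vertices each lying in the closed star of every other vertex of the set is a clique. Everything else — the application of Theorem \ref{thm:amalgamation}, and the routine verification that a homogeneous quasimorphism, its defect, its values, and its bounded Euler class pull back correctly along a direct--product projection — is bookkeeping.
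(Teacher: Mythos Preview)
Your argument is correct and follows the paper's approach---split $\Arm(\Gamma)=\Arm(\textrm{St}(v))\star_{\Arm(\textrm{Lk}(v))}\Arm(\Gamma\smallsetminus\{v\})$ for a well-chosen $v\in S$ and apply Theorem~\ref{thm:amalgamation}---but your inductive Case~2 is in fact vacuous and the induction unnecessary. Since $S$ is not a clique there exist non-adjacent $v,w\in S$; such a $v$ is automatically not a cone vertex (because $w$ is not adjacent to it) and satisfies $S\not\subseteq\textrm{St}(v)$ (because $w\notin\textrm{St}(v)$), so you are always in your Case~1. This is exactly the paper's one-step choice of $v$: pick any vertex in the support that has a non-adjacent vertex also in the support, and conclude directly. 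Your explicit remark that $\bar\phi$ vanishes on every generator because each lies in one of the two vertex groups is correct and makes transparent a point the paper leaves implicit.
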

Observe that no non-trivial element in the commutator subgroup of a right-angled Artin group conjugates into a clique. 
An application of Bavard's Duality Theorem \ref{thm:Bavards duality} yields:
\begin{thm} \label{thm:raags and scl}
Let $g_0$ be a non-trivial element in the commutator subgroup of a right-angled Artin group. Then $\scl(g_0) \geq 1/2$. This bound is sharp. 
\end{thm}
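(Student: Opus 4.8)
The statement reduces immediately to Theorem~\ref{thm:quasimorphisms on raags} together with Bavard's Duality Theorem, so the real work is the proof of Theorem~\ref{thm:quasimorphisms on raags}, which I would carry out by induction on the number of vertices of $\Gamma$. First I would record why the hypothesis of Theorem~\ref{thm:quasimorphisms on raags} is satisfied: if $g_0 \in \Arm(\Gamma)'$ is non-trivial and $K \subseteq \Gamma$ is a clique, then $\Arm(K) \cong \Z^{|K|}$, the abelianisation homomorphism $\Arm(\Gamma) \to \Z^{\Vrm(\Gamma)}$ restricts to the coordinate injection on $\Arm(K)$, and the image of an element in the abelianisation is a conjugacy invariant; since $g_0$ lies in the commutator subgroup it has trivial image, so any conjugate of $g_0$ lying in $\Arm(K)$ would be trivial, contradicting $g_0 \neq e$. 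Hence $g_0$ does not conjugate into a subgroup of a clique.

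For Theorem~\ref{thm:quasimorphisms on raags}: if $\Gamma$ is a clique there is nothing to prove, so assume $\Gamma$ has two non-adjacent vertices and choose a vertex $v$ with $\textrm{St}(v) \subsetneq \Gamma$ (which exists since $\Gamma$ is not a clique), so that $\textrm{St}(v)$, $\Gamma \smallsetminus \{v\}$, and $\textrm{Lk}(v)$ all have strictly fewer vertices. Write $\Arm(\Gamma) = \Arm(\textrm{St}(v)) \star_{\Arm(\textrm{Lk}(v))} \Arm(\Gamma \smallsetminus \{v\})$. By Proposition~\ref{prop: convex subgroups of raartin groups}, and using $\Arm(\textrm{St}(v)) = \langle v \rangle \times \Arm(\textrm{Lk}(v))$, the edge group $\Arm(\textrm{Lk}(v))$ is left relatively convex in both vertex groups. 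If $g_0$ conjugates into neither factor, Theorem~\ref{thm:amalgamation} produces the desired $\bar{\phi}$, vanishing on $\Arm(\textrm{St}(v))$ and $\Arm(\Gamma\smallsetminus\{v\})$ and hence on all generators of $\Arm(\Gamma)$. Otherwise, after replacing $g_0$ by a conjugate I may assume $g_0$ lies in one of the two factors. If $g_0 \in \Arm(\Gamma\smallsetminus\{v\})$, then $g_0$ does not conjugate within that subgroup into a clique (cliques of $\Gamma\smallsetminus\{v\}$ are cliques of $\Gamma$), so the inductive hypothesis gives a suitable $\bar{\phi}'$ on $\Arm(\Gamma\smallsetminus\{v\})$, which I pull back along the retraction $r \colon \Arm(\Gamma) \to \Arm(\Gamma\smallsetminus\{v\})$ sending $v \mapsto e$; since $r$ is a homomorphism with $r(g_0) = g_0$, the homogeneous quasimorphism $\bar{\phi}' \circ r$ has defect $\leq 1$, value $\geq 1$ on $g_0$, vanishes on the generators, and pulls back the circle action. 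If instead $g_0 \in \Arm(\textrm{St}(v))$, write $g_0 = v^k h$ with $h \in \Arm(\textrm{Lk}(v))$; since $v$ is adjacent to every vertex of $\textrm{Lk}(v)$, if $h$ conjugated into a clique of $\textrm{Lk}(v)$ then $g_0$ would conjugate into a clique of $\Gamma$, so $h$ does not, and the inductive hypothesis applied to $(\textrm{Lk}(v), h)$ together with the retraction $\Arm(\Gamma) \to \Arm(\textrm{Lk}(v))$ (which sends $g_0 \mapsto h$) finishes this case.

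With Theorem~\ref{thm:quasimorphisms on raags} in hand, the inequality follows from Bavard's Duality Theorem~\ref{thm:Bavards duality}: $\scl(g_0) \geq |\bar{\phi}(g_0)|/(2 D(\bar{\phi})) \geq 1/2$. For sharpness it suffices to exhibit one example: taking $\Gamma$ to be two vertices with no edge gives $\Arm(\Gamma) = \F_2$, and $[\at, \bt]$ is a non-trivial element of the commutator subgroup with $\scl([\at,\bt]) = 1/2$ (see Example~\ref{exmp: extemal brooks quasimorphisms on free group}, or combine the universal bound $\scl \leq 1/2$ for commutators from Subsection~\ref{subsec:spectral gaps of scl} with the inequality just proved).

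The main obstacle I anticipate is the bookkeeping in the inductive step of Theorem~\ref{thm:quasimorphisms on raags}: one must verify carefully that the property ``$g_0$ does not conjugate into a subgroup of a clique'' is inherited by the relevant elements of the smaller right-angled Artin groups $\Arm(\Gamma\smallsetminus\{v\})$ and $\Arm(\textrm{Lk}(v))$, and that in the $\langle v\rangle \times \Arm(\textrm{Lk}(v))$ factor the $v$-exponent is correctly discarded by the retraction. The amalgamation input (Theorem~\ref{thm:amalgamation}) and the pull-back-along-a-retraction argument are each routine in isolation; choosing the Bass--Serre splitting along $v$ so that these combine cleanly is where the care is needed.
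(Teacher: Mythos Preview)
Your proof is correct but takes a different route from the paper. The paper avoids induction entirely: after cyclically reducing $g_0$, it chooses $v$ to lie in the \emph{support} of $g_0$ together with a vertex $w$ in the support that is non-adjacent to $v$ (such a pair exists precisely because $g_0$ does not conjugate into a clique). With this choice of $v$, the cyclic support of $g_0$ meets both $\{v\}$ and $\Vrm(\Gamma)\setminus\textrm{St}(v)$, so $g_0$ cannot conjugate into either factor of the splitting $\Arm(\textrm{St}(v)) \star_{\Arm(\textrm{Lk}(v))} \Arm(\Gamma \setminus \{v\})$, and Theorem~\ref{thm:amalgamation} applies in one shot. You instead pick $v$ arbitrarily (subject only to $\textrm{St}(v)\subsetneq\Gamma$), accept that $g_0$ may conjugate into a factor, and handle that case by induction via the canonical retraction onto the factor. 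Both arguments are valid; the paper's is shorter and uses Theorem~\ref{thm:amalgamation} exactly once, while yours exploits the retraction structure of RAAGs and would adapt more easily to settings where a support-based choice of splitting vertex is not available.
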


\begin{proof}
(of Theorem \ref{thm:quasimorphisms on raags})
Let $g \in \Arm(\Gamma)$ be such an element. We may suppose that $g$ is cyclically reduced, as homogeneous quasimorphisms are invariant under conjugation. 
Choose a vertex $v$ in the support of $g$ such that there is another vertex $w$ in the support of $g$ which is non-adjacent to $v$. Such a vertex exists as $g$ does not conjugate into a clique. Write $\Arm(\Gamma)$ as
\[
\Arm(\Gamma) = \Arm(\textrm{St}(v)) \star_{\Arm(\textrm{Lk}(v))} \Arm(\Gamma \backslash \{v \})
\]
and observe that $g$ does not conjugate into any factor of this amalgamation as both $v$ and $w$ are in the support of $g$.
By Proposition \ref{prop: convex subgroups of raartin groups}, both $\Arm(\textrm{Lk}(v)) < \Arm(\textrm{St}(v))$ and 
$\Arm(\textrm{Lk}(v)) < \Arm(\Gamma \backslash \{v \})$ are left relatively convex subgroups.
We conclude using Theorem \ref{thm:amalgamation}. Commutators in $\Arm(\Gamma)$ have $\scl$ at most $1/2$. Hence this bound is sharp.
\end{proof}

\bibliographystyle{alpha}
\bibliography{bib_raagsandscl}

\end{document}